\numberwithin{theorem}{section}
\newcommand{\TheTitle}{LQ-optimal Sample-data Control under Stochastic Delays:
Gridding Approach for Stabilizability and Detectability} 
\newcommand{\TheAuthors}{Masashi Wakaiki, Masaki Ogura,
	and Jo\~ao P. Hespanha}
\title{{\TheTitle}\thanks{Submitted to the editors DATE.
This paper was partially presented at the American Control Conference 2017 \cite{WakaikiACC2017}.
\funding{
	This material is based upon work 
	supported by JSPS KAKENHI Grant Number 17K14699 and 
	the National Science Foundation 
	under Grants No. CNS-1329650 and EPCN-1608880.}}}
\author{
  Masashi Wakaiki\thanks{Graduate School of System Informatics, Kobe University,  Hyogo 657-8501, Japan
    (\email{wakaiki@ruby.kobe.u.ac.jp}).}
  \and
  Masaki Ogura\thanks{Division of Information Science, 
  	Graduate School of Science and Technology, Nara Institute of Science and Technology, Nara 630-0192, Japan
    (\email{oguram@is.naist.jp}).}
  \and
 Jo\~ao P. Hespanha\thanks{Center for Control,
 	Dynamical-systems and Computation (CCDC), University of California,
 	Santa Barbara, CA 93106, USA
 	(\email{hespanha@ucsb.edu}).}
}
\DeclareMathOperator{\diag}{diag}
\newtheorem{problem}[theorem]{Problem}
\newtheorem{assumption}[theorem]{Assumption}
\newtheorem{remark}[theorem]{Remark}
\begin{document}

\maketitle

% REQUIRED
\begin{abstract}
We solve a linear quadratic optimal control problem 
for sampled-data systems with stochastic delays.
The delays are stochastically determined by the last few delays.
The proposed optimal controller can be efficiently computed 
by iteratively solving a Riccati difference equation,
provided that a discrete-time Markov jump system equivalent to
the sampled-data system is stochastic stabilizable and detectable. 
Sufficient conditions for these notions are provided in the form of linear matrix inequalities, 
from which stabilizing controllers and state observers can be constructed.
\end{abstract}

% REQUIRED
\begin{keywords}
 Time-delay, optimal control, Markov process, stochastic stabilizability, stochastic detectability
\end{keywords}

% REQUIRED
\begin{AMS}
  49J15, 93C57, 93E15
\end{AMS}

\section{Introduction}
Communication delays occur in networked control systems due to
signal processing and congestion in busy channels.
Such delays are generally time-varying, and 
if their range is large, control methods 
developed for systems with constant delays in \cite{Curtain1995,Foias1996} may not be suitable.
With a rapid development of communication technologies,
control under time-varying delays
has received extensive attention over recent decades,
as surveyed in \cite{Yang2006, Hespanha2007}.

One approach to compensate for time-varying delays is 
the virtual time-delay method developed, e.g.,  for bilateral control \cite{Kosuge1996}.
In this method, the maximum value of delays is assumed to be known, and
control signals are updated when the maximum time of delays has passed. This
method keeps the apparent delays constant but may degrade the performance of networked control systems 
if the maximum delay is quite larger than the average delay.
Another approach is to measure delays
by time-stamped messages
and exploit these measurements in the control algorithms, as in Fig.~\ref{fig:closed_loop}.
An example of this scenario can be found in inter-area power systems \cite{Safaei2014}.
Controllers using time-stamp information in this way are delay-dependent, and
stabilization by such controllers has been studied 
in \cite{Hirche2008,Hetel2011,Kruszewski2012,
	Safaei2014,Zhang2005, Huang2008, Shi2009_stochastic_delay,
	Sun2011, Demirel2013} and references therein.
Time-stamped messages are also used for 
linear quadratic (LQ) control in \cite{Nilsson1997,Nilsson1998, Shousong2003, Kordonis2014}
and for model predictive control in \cite{Srini2004} under stochastically time-varying delays.

\begin{figure}[tb]
	\centering
	\includegraphics[width = 5.5cm]{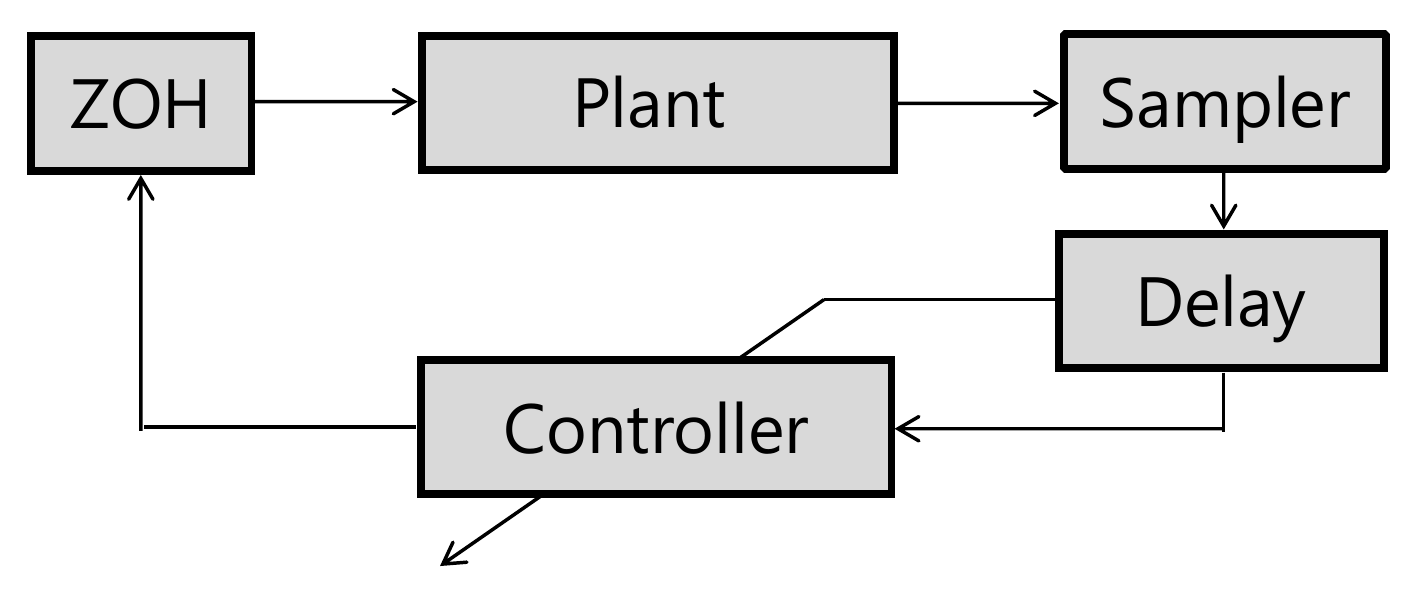}
	\caption{Closed-loop system.}
	\label{fig:closed_loop}
\end{figure}

In addition to the earlier studies mentioned above,
the authors in \cite{Xu2012,Kobayashi2014, Ceja2015,Wang2016,Komanovsky2001}  have developed 
design methods of LQ controllers
for scenarios where the
measurements of random delays are not available.
However, most of those syntheses of optimal controllers 
require online computation and 
are based on assumptions that
the delays can take only finitely many values or 
be independent and identically-distributed random variables. 
In practice, 
communication delays take a continuum of values and
need to be modeled by more general stochastic processes such as Markov processes in \cite{Hermenier2009,Ghanaim2011,Cong2010}.
A notable exception can be found in \cite{Kordonis2014}, in which the authors have
presented an offline computation method of delay-dependent LQ controllers
for systems with
continuous-valued Markovian
delays. The formulation in \cite{Kordonis2014}
requires a solution of a nonlinear vector integral equation called the Riccati integral equation and
ignores the intersample behavior of the closed-loop system.

In this paper, we study delay-dependent LQ
control for sampled-data linear systems.
The advantages of the proposed method are twofold.
First, our delay model is more general than that in the above previous studies.
Indeed, in the model we consider, the present
delay is determined by the last few delays like in an
autoregressive models (see, e.g., Chapter 9 of \cite{Percival1993} for autoregressive models), and hence 
our delay model belongs to a class of higher-order Markov models.
Second, we can efficiently compute an LQ control law
that takes into account the intersample behavior. This controller is obtained 
by iteratively solving a Riccati difference equation.

A key step in the construction 
of the controller consists of 
reducing the original sampled-data problem into
an LQ problem for a  discrete-time Markov jump system
whose jumps are modeled by a Markov chain taking values in a general Borel space.
In \cite{Costa2015}, the reduced LQ problem has been solved under
the assumption that the plant and the LQ criterion satisfy 
appropriately defined
stochastic stabilizability and detectability notions.
However, there has been relatively little work on the test of these properties.
To use the results on stochastic stability in \cite{Costa2014},
we obtain novel  sufficient conditions 
for stochastic stabilizability and detectability 
in terms of linear matrix inequalities (LMIs).
From these results, we can also construct stabilizing controllers and state observers.
The proposed method is inspired by the gridding methods 
for establishing the stability 
of networked control systems with aperiodic sampling and time-varying delays
in \cite{Fujioka2009,Oishi2010, Donkers2011,Donkers2012_stochastic, Hetel2011}.
Moreover, we show that the sufficient conditions can be arbitrarily tight 
under certain assumptions.

%The preliminary version of this paper appeared in the conference paper \cite{WakaikiACC2017}
%without proofs.
%In the present paper, we generalize the delay class from Markovian delays to
%delays with memory of more than one unit.

The remainder of the paper is organized as follows. 
We provide the problem statement in Section~2.
Section~3 is devoted to reducing our optimal control problem to an LQ problem
for discrete-time Markov jump systems.
In Section~4, we recall the general results in \cite{Costa2015}  on
the equivalent discrete-time LQ problem.
Section~5 addresses the derivation of sufficient conditions for stochastic
stabilizability and detectability.
In Section~6,
we illustrate the proposed method with a numerical simulation of a batch reactor.

\subsubsection*{Notation}
Let $\mathbb{Z}_+$, $\mathbb{R}^{n\times m}$, and $\mathbb{C}^{n \times m}$ denote the set of nonnegative integers and 
the sets of real and complex matrices with size $n \times m$, respectively.
For a real matrix $M$, let us denote its transpose by $M^{\top}$.
The Euclidean norm of $v \in \mathbb{R}^{n}$ is
denoted by $\|v\| := (v^\top v)^{1/2}$ and
the corresponding induced norm of a matrix $M \in \mathbb{R}^{m\times n}$ by
$\|M\|  := \sup \{  \|Mv \|:~
v\in \mathbb{R}^{n},~\|v\| = 1 \}$.
For simplicity, 
we write a partitioned symmetric
matrix 
\[\begin{bmatrix}
Q & W \\ W^\top & R
\end{bmatrix}
\text{~~as~~}
\begin{bmatrix}
Q & W \\ \star & R
\end{bmatrix}.
\]
%We denote by $\mathbb{B}(\mathbb{C}^n,\mathbb{C}^m)$ 
%the normed linear space of all $n \times m$ complex matrices.

Let $(\mathcal{M}, \mathcal{B}(\mathcal{M}))$ be a Borel space, that is, 
$\mathcal{M}$ be a Borel subset of a complete and separable metric space and
$ \mathcal{B}(\mathcal{M})$ be its Borel $\sigma$-algebra.
In this article, $\mathcal{M}$ is a compact subset of $\mathbb{R}^{p}$ except in Section 4.
For a $\sigma$-finite measure $\mu$ on $\mathcal{M}$, we denote by
$\mathbb{H}^{n \times m}_{1}$ 
the space of matrix-valued functions 
$P(\bullet):\mathcal{M} \to　\mathbb{R}^{n \times m}$
that are measurable and integrable in $\mathcal{M}$, and
similarly, by
$\mathbb{H}^{n \times m}_{\sup}$ the space of matrix-valued functions
$P(\bullet):\mathcal{M} \to \mathbb{R}^{n \times m}$ 
that are measurable and essentially bounded in $\mathcal{M}$.
For $P \in \mathbb{H}^{n \times m}_{\sup}$, we define a norm
$\|P\|_{\infty}$ by the essential supremum of the function $\|P\|:\mathcal{M} \to [0,\infty)$.
For simplicity, we will write 
$\mathbb{H}^{n}_{1} := \mathbb{H}^{n\times n}_{1}$, and
$\mathbb{H}^{n}_{\sup} := \mathbb{H}^{n \times n}_{\sup}$,
\[\mathbb{H}^{n+}_{\sup} := \{P \in \mathbb{H}_{\sup}^n:P(\phi)\geq 0\quad 
\text{for $\mu$-almost every $\phi \in \mathcal{M}$} \}.\]
Additionally, we denote by $\mathbb{H}^{n}_{1,\mathbb{C}}$
the space of matrix-valued functions
$P(\bullet):\mathcal{M} \to \mathbb{C}^{n \times m}$ 
that are measurable and integrable in $\mathcal{M}$
and by
$\mathbb{H}^{n+}_{\sup,\mathbb{C}}$
the space of matrix-valued functions
$P(\bullet):\mathcal{M} \to \mathbb{C}^{n \times m}$ 
that are measurable and essentially bounded in $\mathcal{M}$ and
satisfy $P(\phi) \geq 0$ for $\mu$-almost every $\phi \in \mathcal{M}$.
For a bounded linear operator $T$ on a Banach space,
let $r_{\sigma}(T)$ denote the spectral radius of $T$.

\section{Problem Statement}
Consider the following linear continuous-time plant:
\begin{equation}
\label{eq:plant_dynamics}
\dot x(t) = A_c x(t) + B_c u(t),\qquad  x(0) = x_0
\end{equation}
where $x(t) \in \mathbb{R}^n$ and
$u(t) \in \mathbb{R}^m$
are the state and the input of the plant.
This plant is connected to a controller through 
a time-driven sampler with period $h >0$ and an event-driven zero-order hold as
shown in Fig.~\ref{fig:closed_loop}.

The state $x$ is measured at each sampling time $t=kh$ ($k\in\mathbb{Z}_+$), 
and the controller receives the sampled state $x(kh)$ 
at time $t = kh + \tau_k$, where $\tau_k > 0$ is a 
sensor-to-controller delay.
We assume that the delay $\tau_k$ 
becomes known to the controller at the time $t=kh+\tau_k$ when the sampled state $x(kh)$ arrives.
One way to measure the delays
is to mark every output of the sampler with a time-stamp
and then to compute the difference between the value of the time-stamp and the present time
of a clock in the controller.
Through 
the zero-order hold,
the discrete-time signal $u_k$ generated from the controller is
transformed  to 
the continuous-time signal
\begin{equation}
\label{eq:input}
u(t) = 
\begin{cases}
u_{-1} & 0 \leq t < \tau_0 \\
u_k &  kh+\tau_k \leq t < (k+1)h + \tau_{k+1},~k\in \mathbb{Z}_+.
\end{cases}
\end{equation}
where $u_{-1}$ is an initial state of the zero-order hold.

%Consider the following time-delay systems　 in
%a fixed probability space $(\Omega, \mathcal{F}, P)$:
%\begin{equation}
%\label{eq:delay_sys}
%\begin{cases}
%\dot x(t) = A_c x(t) + B_c u(t) & x(0) = x_0 \\
%u(t) = u_k & kh+\tau_k \leq t < (k+1)h + \tau_{k+1},
%\end{cases}
%\end{equation}
%where $x(t) \in \mathbb{R}^n$ and
%$u(t) \in \mathbb{R}^m$
%are the state and the input of the plant.
%where $h$ is the sampling period and
%$u(t) = u_{-1} \in \mathbb{R}^m$ for $0 \leq t < \tau_0$.

Throughout this paper, we fix the probability space $(\Omega, \mathcal{F}, P)$.
We assume that the delays $\{\tau_k:k\in \mathbb{Z}_+\}$ 
is smaller than one sampling period and that
the latest delay is stochastically determined by the last few delays.
We specifically assume that the delay sequence $\{\tau_k:k\in \mathbb{Z}_+\}$ 
is a higher-order Markov chain. For some known $p \in \mathbb{N}$,
define a delay vector $\phi_k$ by
\begin{equation}
\label{eq:phi_def}
\phi_k :=
\begin{bmatrix}
\tau_{k} \\ \vdots \\ \tau_{k-p+1}
\end{bmatrix}
\qquad \forall k \in \mathbb{Z}_+,
\end{equation}
where $\tau_{-p+1},\dots, \tau_{-1} < h$ are the time delays
associated with the sampling instants $t = (-p+1)h,\dots,-h$.
\begin{assumption}[Higher-order Markovian delays]
	\label{assump:delay}
	%	For every $k \geq 0$, the delay $\tau_k$ satisfies 
	%	$\tau_k \in [\underline \tau, \tau_{\max}]
	%	\subset [0,h)$.
	The sequence $\{ \phi_k:k \in \mathbb{Z}_+\}$ in \eqref{eq:phi_def} is 
	a time-homogeneous Markov chain 
	taking values in  $\mathcal{M} := [\tau_{\min}, \tau_{\max}]^p
	\subset [0,h)^p$ 
	and
	having transition probability kernel $\mathcal{G}(\bullet, \bullet)$ with
	a density $g(\bullet, \bullet)$ with respect to a $\sigma$-finite measure $\mu$
	on $\mathcal{M}$, so that for every
	$k \in \mathbb{Z}_+$ and every Borel set
	$\mathcal{B}$ of $\mathcal{M}$, 
	\[
	\mathcal{G}(\phi, \mathcal{B} ) :=
	P(\phi_{k+1} \in \mathcal{B} | \phi_k = \phi)
	= \int_{\mathcal{B}} g(\phi, \ell) \mu(d\ell).
	\]
\end{assumption}

The choice of the dimension $p$ depends on  accuracy of delay models and 
computational cost.
As the dimension $p$ increases, we may obtain more accurate models of delays.
However, a large $p$ requires substantial computational resources for optimal controllers. 
Moreover, the gridding method presented in Section 5 suffers from the curse of dimensionality in
the case of large $p$.

%See Remark \ref{rem:large_delay} below for
%the discussion of the case where delays can be larger than
%the sampling period.

Define 
\begin{equation}
\label{eq:discrete_state}
\xi_0 := 
\begin{bmatrix}
x_0 \\ u_{-1}
\end{bmatrix}.
\end{equation}
Let $\hat \mu$ be a probability measure on $\mathbb R^{n+m} \times \mathcal{M}$.
We assume that 
the pair of the initial state and delay
$(\xi_0,\phi_0)$ has a distribution $\widehat \mu$. 
Define $\widehat \mu_{\mathcal{M}}$ by 
$\widehat \mu_{\mathcal{M}}(\mathcal{B}) :=
\widehat \mu(\mathbb{R}^{n+m} \times \mathcal{B}) = P(\theta_0 \in \mathcal{B})$ for all Borel sets 
$\mathcal{B}$ of $\mathcal{M}$.
We place the following mild assumption on the initial distribution $\widehat \mu$:
\begin{assumption}[Initial distribution]
	\label{assump:initial_dist}
	The initial distribution $\widehat \mu$ of $(\xi_0,\phi_0)$ satisfies
	A1) $E(\|\xi_0\|^2) < \infty$ and A2)
	$\widehat \mu_{\mathcal{M}} $ is absolutely continuous with respect to $\mu$.
\end{assumption}
The assumption of absolute continuity
guarantees the existence of the Radon-Nikodym
derivative of $\widehat \mu_{\mathcal{M}}$.

Let $\{\mathcal{F}_k:k \in \mathbb{Z}_+\}$ denote a filtration, where
$\mathcal{F}_k$ represents the $\sigma$-field generated by
\[
\{u_{-1}, x(0),\phi_0,\dots,x(kh),\phi_k \} = \{\tau_{-p+1},\dots, \tau_{-1}, u_{-1}, x(0),\tau_0,\dots,x(kh),\tau_k \}. 
\]
Set $\mathcal{U}_c$ as the class of
control inputs $u = \{u_k: k \in \mathbb{Z}_+\}$ such that $u_k$ is $\mathcal{F}_k$
measurable and the controlled system \eqref{eq:plant_dynamics} and
\eqref{eq:input} satisfies 
$E(\|x(t)\|^2) \to 0$ as $t\to \infty$ 
and $E(\|u_k\|^2) \to 0$ as $k\to \infty$ for
every initial distribution $\widehat \mu$ satisfying Assumption~\ref{assump:initial_dist}.
For all $u \in \mathcal{U}_c$,
we consider the infinite-horizon continuous-time 
quadratic cost functional $\mathcal{J}_c$
defined by
\begin{align}
\label{eq:Jc_u}
\mathcal{J}_c(\widehat \mu ,u) := 
E\left(
\int^{\infty}_0
%\left(
x(t)^{\top} Q_c x(t) + u(t)^{\top} R_c u (t) 
%\right)
dt
\right),
\end{align}
where $Q_c \geq 0$ and $R_c > 0$ are weighting matrices with appropriate dimensions.
%The optimal cost is defined as $\inf_{u \in \mathcal{U}_c} \mathcal{J}_c(\widehat \mu,u)$.

In this paper, we study the following LQ problem:
\begin{problem}
	\label{prob:LQ}
	Consider a sampled-data system \eqref{eq:plant_dynamics} and \eqref{eq:input}, and let
	Assumptions \ref{assump:delay} and \ref{assump:initial_dist} hold.
	Find an optimal control law $u^{\rm{opt}} \in \mathcal{U}_c$ that achieves
	$\mathcal{J}_c(\widehat \mu,u^{\rm{opt}}) = \inf_{u \in \mathcal{U}_c} 
	\mathcal{J}_c(\widehat \mu,u)$
	for every initial distribution $\widehat \mu$  satisfying Assumption~\ref{assump:initial_dist}.
\end{problem}

\begin{remark}
	\label{rem:large_delay}
	{ \em
		In this paper, we impose the following two  assumptions on delays:
		(i) The communication channel from the controller to the actuator has no delays; 
		(ii) Delays are smaller than one sampling period $h$.
		If the controller-to-actuator delays $\tau_{ca}$ are deterministic (see, e.g.,
		\cite[Section~2.3.2]{Yang2006} for this situation), then
		we can also apply the proposed method in the presence of the
		controller-to-actuator delays by using the total delays $\tau_k + \tau_{ca}$ instead of 
		the sensor-to-controller delays $\tau_k$. 
		To deal with delays larger than one sampling period $h$,
		we can employ
		the technique presented in Section II of \cite{Cloosterman2009},
		but a stochastic model on delays becomes complicated. Therefore, 
		we here assume that delays are smaller than one sampling period $h$.
	}
\end{remark}

\section{Reduction to Discrete-time LQ Problem}
In this section, 
we transform Problem \ref{prob:LQ}
to an LQ problem of discrete-time Markov jump linear systems.

Consider the sampled-data system \eqref{eq:plant_dynamics} and
\eqref{eq:input}.
We define
\begin{equation}
\label{eq:xi_def}
\xi_k := 
\begin{bmatrix}
x(kh) \\ u_{k-1}
\end{bmatrix}.
\end{equation}
Then the dynamics of $\xi$ 
can be described by
the following discrete-time Markov jump linear system
\begin{equation}
\label{eq:discretized_sys}
\xi_{k+1} = 
A(\phi_k)
\xi_k + 
B(\phi_k)
u_k,
\end{equation}
where, for every vector $\phi \in \mathcal{M}$ whose the first element is given by $\tau$, we define
\begin{subequations}
	\label{eq:A_B_def} 
	\begin{gather}
	A(\phi):=
	\begin{bmatrix}
	A_d & B_d - \Gamma(\tau)  \\
	0 & 0
	\end{bmatrix},\quad
	B(\phi):=
	\begin{bmatrix}
	\Gamma(\tau) \\
	I
	\end{bmatrix}  \\
	A_d := e^{A_ch},~
	B_d := \int^h_0e^{A_cs} B_cds,~
	\Gamma(\tau) := \int^{h-\tau}_0 e^{A_cs} B_c ds . 
	\end{gather}
\end{subequations}
By definition, the matrices $A$ and $B$ satisfy $A \in \mathbb{H}^{n}_{\sup} $
and $B \in \mathbb{H}^{n \times m}_{\sup} $.
This delay-dependent discrete-time system is widely used for 
the analysis of time-delay systems, e.g., in
\cite{Nilsson1997,Nilsson1998, Shousong2003,
	Donkers2011, Donkers2012_stochastic, Hetel2011,Kordonis2014}.
%Since $u_{-1}$ is deterministic,  the initial distribution $\widehat \mu_{\xi}$ of
%$(\xi_0,\phi_0)$ satisfies the following assumptions similar to the ones placed on $(x_0,\phi_0)$:
%\begin{assumption}[Initial distribution]
%	\label{assump:initial_dist_xi}
%	The initial distribution $\widehat \mu_{\xi}$ of $(\xi_0,\phi_0)$ satisfies
%	A1)' $E(\|\xi_0\|^2) < \infty$ and A2)'
%	$\widehat \mu_{\xi, \mathcal{M}}(\bullet ) := \widehat \mu_\xi(\mathbb{R}^{n+m}, \bullet )$ 
%	is absolutely continuous with respect to $\widehat \mu_{\xi}$.
%\end{assumption}
%For simplicity of notation, we shall write $\widehat \mu$ rather than $\widehat \mu_{\xi}$,
%even if we think  the initial distribution of
%$(\xi_0,\phi_0)$.

Let $\{\mathcal{F}^d_k:k \in \mathbb{Z}_+\}$ denote a filtration, where
$\mathcal{F}^d_k$ represents the $\sigma$-field generated by
\[
\{\xi_0,\phi_0,\dots,\xi_k,\phi_k \}.
\]
We denote by $\mathcal{U}_d$ 
the discrete-time counterpart of $\mathcal{U}_c$, 
defined
as the class of
control inputs $\{u_k: k \in \mathbb{Z}_+\}$ such that $u_k$ is $\mathcal{F}^d_k$
measurable and $E(\|\xi_k\|^2) \to 0$ as $k \to \infty$ for
every initial distribution $\widehat \mu$.
The following result establishes that these classes of control inputs are equal.
\begin{lemma}
	\label{lem:Ud_Uc}
	For
	the sampled-data system \eqref{eq:plant_dynamics},
	\eqref{eq:input} and its discretization \eqref{eq:discretized_sys},
	we obtain $\mathcal{U}_c = \mathcal{U}_d$.
\end{lemma}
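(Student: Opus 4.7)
The plan is to verify separately that the measurability conditions and the convergence conditions defining $\mathcal{U}_c$ and $\mathcal{U}_d$ coincide.

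For measurability, I would first argue that the filtrations $\{\mathcal{F}_k\}$ and $\{\mathcal{F}_k^d\}$ are identical. The inputs $u_{-1},\dots,u_{k-1}$ are $\mathcal{F}_k$-measurable by construction (once we know a candidate $u \in \mathcal{U}_c$), so the augmented state $\xi_j = (x(jh),u_{j-1})$ for $j=0,\dots,k$ is $\mathcal{F}_k$-measurable; conversely $x(jh) = \xi_j^{(1)}$ and $u_{-1} = \xi_0^{(2)}$ are $\mathcal{F}_k^d$-measurable, and $\tau_{j}$ appears as an entry of $\phi_j$. Hence the two $\sigma$-fields agree and the $\mathcal{F}_k$-measurability of $u_k$ is equivalent to its $\mathcal{F}_k^d$-measurability.

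For the convergence conditions, note that $E(\|\xi_k\|^2) = E(\|x(kh)\|^2) + E(\|u_{k-1}\|^2)$, so the implication $\mathcal{U}_c \subseteq \mathcal{U}_d$ is immediate: evaluating $E(\|x(t)\|^2)\to 0$ along the subsequence $t=kh$ and combining with $E(\|u_k\|^2)\to 0$ yields $E(\|\xi_k\|^2)\to 0$. For the reverse inclusion $\mathcal{U}_d \subseteq \mathcal{U}_c$, the task is to recover intersample control. By the variation-of-constants formula, for $t\in[kh,(k+1)h)$,
\[
x(t) = e^{A_c(t-kh)}x(kh) + \int_{kh}^{t} e^{A_c(t-s)} B_c u(s)\, ds,
\]
and on this interval $u(s)$ takes only the values $u_{k-1}$ and $u_k$ according to \eqref{eq:input}. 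Since $e^{A_c s}$ is continuous on the compact interval $[0,h]$, there is a constant $C>0$ (depending only on $A_c$, $B_c$, $h$) such that
\[
\|x(t)\|^2 \leq C\bigl(\|x(kh)\|^2 + \|u_{k-1}\|^2 + \|u_k\|^2\bigr)\qquad\forall t\in[kh,(k+1)h).
\]
Taking expectations and using $E(\|\xi_k\|^2)\to 0$, which delivers both $E(\|x(kh)\|^2)\to 0$ and $E(\|u_{k-1}\|^2)\to 0$, the right-hand side tends to $0$ uniformly in $t$ on each sampling interval as $k\to\infty$. Hence $E(\|x(t)\|^2)\to 0$ as $t\to\infty$, and the condition on $E(\|u_k\|^2)$ is directly inherited from $E(\|\xi_{k+1}\|^2)\to 0$.

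No step presents a serious obstacle; the only subtle point is to be careful that in the intersample estimate one controls $u(s)$ using \emph{both} $u_{k-1}$ and $u_k$, because on $[kh,kh+\tau_k)$ the hold still outputs the previous value. Once this is handled, uniform boundedness of $\|e^{A_c s}\|$ on $[0,h]$ closes the argument.
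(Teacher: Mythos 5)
Your proposal is correct and follows essentially the same route as the paper: both reduce the claim to the equivalence of the two convergence conditions (the filtrations being equal by definition) and both establish the nontrivial direction via the variation-of-constants intersample bound $\|x(kh+\theta)\|^2 \leq C(\|x(kh)\|^2 + \|u_{k-1}\|^2 + \|u_k\|^2)$, the paper merely deriving the squared estimate from the linear one via $ab \le (a^2+b^2)/2$ while you state it directly. Your observation that the hold outputs $u_{k-1}$ on $[kh, kh+\tau_k)$ is exactly the point the paper's constants $M_2, M_3$ account for.
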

\begin{proof}
	Since the filtrations $\{\mathcal{F}_k\}$ and $\{\mathcal{F}_k^d\}$ are equal by definition, 
	it is enough to prove that
	the following two conditions are equivalent:
	\begin{enumerate}
		\item
		$\displaystyle \lim_{k\to\infty}E(\|\xi_k\|^2) = 0 $.
		\item
		$\displaystyle \lim_{t\to\infty}E(\|x(t)\|^2) = 0$ and
		$\displaystyle \lim_{k \to \infty }E(\|u_k\|^2) = 0.$
	\end{enumerate}
	The statement 2 $\Rightarrow$ 1
	follows directly from the definition of $\xi_k$. 
	To prove the converse statement, we note that for
	the system dynamics \eqref{eq:plant_dynamics} and \eqref{eq:input}, 
	there exist constants $M_1,M_2,M_3>0$ such that
	\begin{equation}
	\label{eq:x_trajectory_bound}
	\|x(kh+\theta)\| \leq M_1 \|x(kh)\| + M_2 \|u_{k-1}\| + M_3 \|u_{k}\|
	\qquad \forall \theta \in [0,h).
	\end{equation}
	Therefore
	\begin{align*}
	\|x(kh+\theta)\|^2 &\leq M_1^2 \|x(kh)\|^2 + M_2^2 \|u_{k-1}\|^2 + M_3^2 \|u_{k}\|^2
	+ 2M_1M_2 \|x(kh)\| \cdot \|u_{k-1}\| \\ &\qquad + 2M_2M_3\|u_{k-1}\|  \cdot \|u_{k}\|
	+ 2M_3M_1 \|u_{k}\|\cdot \|x(kh)\| \qquad \forall \theta \in [0,h).
	\end{align*}
	Since 
	\[
	ab \leq \frac{a^2+b^2}{2}\qquad \forall a,b\geq 0,
	\]
	applying this inequality to the terms $\|x(kh)\| \cdot \|u_{k-1}\|$,
	$\|u_{k-1}\|  \cdot \|u_{k}\|$, and $\|u_{k}\|\cdot \|x(kh)\|$, we obtain
	%	By Young's inequality, the inequality \eqref{eq:x_trajectory_bound} leads to 
	\[\|x(kh+\theta)\|^2 \leq N_1 \|x(kh)\|^2 + N_2 \|u_{k-1}\|^2 + N_3 \|u_{k}\|^2
	\qquad \forall \theta \in [0,h)
	\]
	for appropriate constants $N_1,N_2,N_3 > 0$.
	It then follows that if $E(\|\xi_k\|^2) \to 0$ as $k \to \infty$,
	then $E(\|x(t)\|^2) \to 0$ as $t \to \infty$,
	which completes the proof.
\end{proof}

Since the integrand $x(t)^{\top} Q_c x(t) + u(t)^{\top} R_c u (t)$ of 
the cost functional $\mathcal{J}_c$ in \eqref{eq:Jc_u} is non-negative
for every $t \geq 0$,
the cost functional $\mathcal{J}_c$ 
can be expressed as the following (discrete-time) summation 
\[
\mathcal{J}_c = E \left( \sum_{k=0}^\infty \mathcal{J}_k \right),
\]
where 
\begin{equation}
\label{eq:J_k_def}
{\cal J}_k := 
\int^{(k+1)h}_{kh}
%\left(
x(t)^{\top} Q_c x(t) + u(t)^{\top} R_c u (t) 
%\right)
dt \qquad \forall k \in \mathbb{Z}_+.
\end{equation}
For every vector $\phi \in \mathcal{M}$ whose  first element is given by $\tau$, we define
the matrices $Q$, $W$, and $R$ by
\begin{subequations}
	\label{eq:QWR_def}
	\begin{align}
	Q(\phi) &:=
	\begin{bmatrix}
	Q_{11}(\tau) & Q_{12}(\tau)\\
	\star & Q_{22}(\tau)
	\end{bmatrix} \\
	W(\phi) &:= 
	\int^h_{\tau}
	\begin{bmatrix}
	\alpha(\theta)^{\top} Q_c \gamma(\tau, \theta)  \\
	\beta(\theta)^{\top} Q_c \gamma(\tau,\theta) 
	-
	\gamma(\tau,\theta)^{\top} Q_c \gamma(\tau,\theta) 
	\end{bmatrix}
	d\theta \\
	%W(\tau) &:= 
	%\begin{bmatrix}
	%\int^h_\tau \alpha(\theta)^{\top} Q_c \gamma(\tau, \theta) d\theta \\
	%\int^h_{\tau} 
	%\beta(\theta)^{\top} Q_c \gamma(\tau,\theta) 
	%d\theta
	%-
	%\int^h_{\tau} \gamma(\tau,\theta)^{\top} Q_c \gamma(\tau,\theta) d\theta 
	%\end{bmatrix}
	R(\phi) &:=
	(h-\tau) R_c + 
	\int^h_{\tau}\gamma(\tau,\theta)^{\top} Q_c \gamma(\tau,\theta) d\theta  \\
	Q_{11}(\tau) &:= 
	\int^h_0 \alpha(\theta)^{\top} Q_c \alpha(\theta) d\theta \\
	Q_{12}(\tau) &:=
	\int^h_0 \alpha(\theta)^{\top} Q_c \beta(\theta) d\theta
	-
	\int^h_\tau \alpha(\theta)^{\top} Q_c \gamma(\tau, \theta) d\theta  \\
	Q_{22}(\tau) &:=
	\tau R_c + \int^h_0 \beta(\theta)^{\top} Q_c \beta(\theta) d\theta + 
	\int^h_{\tau} \gamma(\tau,\theta)^{\top} Q_c \gamma(\tau,\theta) d\theta  \\
	&\qquad  - 
	\int^h_{\tau} 
	\left( \beta(\theta)^{\top} Q_c \gamma(\tau,\theta)  
	+
	\gamma(\tau,\theta)^{\top} Q_c \beta(\theta)
	\right) d\theta, \notag 
	\end{align}
\end{subequations}
where the functions $\alpha$, $\beta$, and $\gamma$ are given by
\begin{gather*}
\alpha(\theta) := e^{A_c\theta},\quad 
\beta(\theta) := \int^{\theta}_0 e^{A_cs} B_cds\qquad 
\forall \theta \in [0,h) \\
\gamma(\tau,\theta) := \int^{\theta-\tau}_0 e^{A_cs} B_c ds\qquad
\forall \tau \in [\tau_{\min}, \tau_{\max}],~\forall \theta \in [0,h).
\end{gather*}

The next lemma shows that each $\mathcal{J}_k$ 
is a quadratic form on the state $\xi_k$ and the input $u_k$ of the discrete-time system $\Sigma_d$ in
\eqref{eq:discretized_sys}. 
\begin{lemma}
	\label{lem:reduction_DT_LQ_cost}
	Let $x$ and $\xi$ be the solutions of the sampled-data system \eqref{eq:plant_dynamics} and
	\eqref{eq:input} and of the discrete-time system \eqref{eq:discretized_sys}
	with the initial state $\xi_0$ defined by \eqref{eq:discrete_state}, receptively.
	Then ${\cal J}_k$ defined by \eqref{eq:J_k_def} satisfies
	\begin{equation}
	\label{eq:Jk_QSR}
	\mathcal{J}_k = 
	\begin{bmatrix}
	\xi_k \\ u_k
	\end{bmatrix}^{\top}
	\begin{bmatrix}
	Q(\phi_k) & W(\phi_k) \\ \star & R(\phi_k)
	\end{bmatrix}
	\begin{bmatrix}
	\xi_k \\ u_k
	\end{bmatrix}
	\qquad \forall k \in \mathbb{Z}_+, 
	\end{equation}
	where the matrices $Q$, $W$, and $R$ are defined as in \eqref{eq:QWR_def}.
\end{lemma}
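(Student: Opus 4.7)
The plan is to compute $\mathcal{J}_k$ in closed form on $[kh,(k+1)h)$ and then identify the resulting quadratic form with the block matrix in \eqref{eq:Jk_QSR}. Setting $\theta:=t-kh$, I would split the integration interval at $\theta=\tau_k$, since by \eqref{eq:input} the actuator holds $u_{k-1}$ on $[0,\tau_k)$ and switches to $u_k$ on $[\tau_k,h)$. Variation of constants applied to \eqref{eq:plant_dynamics} then gives
\[
x(kh+\theta)=
\begin{cases}
\alpha(\theta)\,x(kh)+\beta(\theta)\,u_{k-1}, & \theta\in[0,\tau_k),\\
\alpha(\theta)\,x(kh)+\bigl(\beta(\theta)-\gamma(\tau_k,\theta)\bigr)u_{k-1}+\gamma(\tau_k,\theta)\,u_k, & \theta\in[\tau_k,h),
\end{cases}
\]
with $\alpha,\beta,\gamma$ the functions introduced right after \eqref{eq:QWR_def}.

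Next I would substitute into $\int_0^h x(kh+\theta)^{\top} Q_c\,x(kh+\theta)\,d\theta$ and collect the six quadratic/bilinear coefficients in the triple $(x(kh),u_{k-1},u_k)$. The $x(kh)$-quadratic coefficient integrates $\alpha^{\top} Q_c\alpha$ over all of $[0,h]$, reproducing $Q_{11}(\tau_k)$. The $x(kh)$--$u_{k-1}$ cross coefficient equals $\int_0^h\alpha^{\top} Q_c\beta\,d\theta-\int_{\tau_k}^h\alpha^{\top} Q_c\gamma\,d\theta=Q_{12}(\tau_k)$. Expanding $(\beta-\gamma)^{\top} Q_c(\beta-\gamma)$ on $[\tau_k,h]$ and adding the contribution $\int_0^{\tau_k}\beta^{\top} Q_c\beta\,d\theta$ from the first subinterval produces the $\beta^{\top} Q_c\beta$, $\gamma^{\top} Q_c\gamma$ and $-(\beta^{\top} Q_c\gamma+\gamma^{\top} Q_c\beta)$ integrals appearing inside $Q_{22}(\tau_k)$. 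The $x(kh)$--$u_k$ and $u_{k-1}$--$u_k$ cross coefficients yield the two blocks of $W(\phi_k)$ (using $(\beta-\gamma)^{\top} Q_c\gamma=\beta^{\top} Q_c\gamma-\gamma^{\top} Q_c\gamma$), and the $u_k$-quadratic coefficient from the state cost contributes $\int_{\tau_k}^h\gamma^{\top} Q_c\gamma\,d\theta$, the second summand of $R(\phi_k)$.

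The input penalty is piecewise constant, giving $\int_0^h u(kh+\theta)^{\top} R_c u(kh+\theta)\,d\theta=\tau_k u_{k-1}^{\top} R_c u_{k-1}+(h-\tau_k)u_k^{\top} R_c u_k$; these supply the $\tau_k R_c$ summand inside $Q_{22}(\tau_k)$ and the $(h-\tau_k)R_c$ summand inside $R(\phi_k)$, completing the match with \eqref{eq:QWR_def}. The argument is essentially bookkeeping, and the one place a sign is easy to lose is the $u_{k-1}$--$u_k$ cross term: on $[\tau_k,h)$ the coefficient of $u_{k-1}$ inside $x$ is $\beta-\gamma$ rather than $\beta$, and this $-\gamma$ is precisely what produces the $-\gamma^{\top} Q_c\gamma$ correction to $\beta^{\top} Q_c\gamma$ that makes the lower block of $W(\phi_k)$ come out as written in \eqref{eq:QWR_def}.
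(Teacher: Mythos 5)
Your proposal is correct and follows essentially the same route as the paper's proof: split the integral at $\theta=\tau_k$, write the explicit variation-of-constants expressions for $x(kh+\theta)$ on each subinterval, expand the quadratic forms, and match the collected coefficients against the definitions in \eqref{eq:QWR_def}. The coefficient bookkeeping you describe (including the $(\beta-\gamma)$ expansion that yields the $-\gamma^{\top}Q_c\gamma$ correction in $W$ and $Q_{22}$) is exactly what the paper carries out.
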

\begin{proof}
	If $0\leq \theta < \tau_k$, then
	\begin{align*}
	x(kh+\theta) 
	%&= 
	%e^{A\theta} x_k + \int^{kh+\theta}_{kh} e^{A(kh+\theta-s)}Bds \cdot u_{k-1} \\
	&=
	\alpha(\theta) x_k + \beta(\theta) u_{k-1} \\
	u(kh+\theta)
	&=u_{k-1},
	\end{align*}
	and we have
	\begin{align*}
	&x(kh+\theta)^{\top} Q_c x(kh+\theta) + u(kh+\theta)^{\top} R_c u (kh+\theta)  \\
	&~~=
	x_k^{\top} \alpha(\theta)^{\top} Q_c \alpha(\theta) x_k
	+
	2x_k^{\top} \alpha(\theta)^{\top} Q_c \beta(\theta) u_{k-1} +
	u_{k-1}^{\top} \beta(\theta)^{\top} Q_c \beta(\theta) u_{k-1} \\
	&\qquad + 
	u_{k-1}^{\top} R_c u_{k-1}.
	\end{align*}
	On the other hand, if $\tau_k \leq \theta < h$, then
	\begin{align*}
	x(kh+\theta) 
	%&= 
	%e^{A\theta} x_k + \int^{kh+\theta}_{kh} e^{A(kh+\theta-s)}Bds \cdot u_{k-1} \\
	&=
	\alpha(\theta) x_k 
	+ (\beta(\theta) - \gamma(\tau_k,\theta)) u_{k-1} 
	+ \gamma(\tau_k,\theta) u_{k} \\
	u(kh+\theta)&= u_k.
	\end{align*}
	Hence 
	\begin{align*}
	&x(kh+\theta)^{\top} Q_c x(kh+\theta) + u(kh+\theta)^{\top} R_c u (kh+\theta) \\
	&\quad =
	x_{k}^{\top} \alpha(\theta)^{\top} Q_c \alpha(\theta)x_k + 
	2 x_{k}^{\top} \alpha(\theta)^{\top} Q_c (\beta(\theta) - \gamma(\tau_k,\theta))u_{k-1} \\
	&\qquad+ 
	2 x_{k}^{\top} \alpha(\theta)^{\top} Q_c \gamma(\tau_k,\theta)u_{k} 
	+ 
	u_{k-1}^{\top} (\beta(\theta) - \gamma(\tau_k,\theta))^{\top} Q_c
	(\beta(\theta) - \gamma(\tau_k,\theta))u_{k-1} \\
	&\qquad +
	2u_{k-1}^{\top} (\beta(\theta) - \gamma(\tau_k,\theta))^{\top} Q_c
	\gamma(\tau_k,\theta)u_{k} +
	u_{k}^{\top}\gamma(\tau_k,\theta)^{\top} Q_c
	\gamma(\tau_k,\theta)u_{k} + u_{k}^{\top} R_c u_{k}.
	\end{align*}
	Substituting these equations into
	\begin{align*}
	%\label{eq:Jk_split}
	\mathcal{J}_k &=
	\int^{kh+\tau_k}_{kh}
	\left(
	x(t)^{\top} Q_c x(t) + u(t)^{\top} R_c u (t) 
	\right)dt \\
	&\qquad 
	+
	\int^{(k+1)h}_{kh+\tau_k}
	\left(
	x(t)^{\top} Q_c x(t) + u(t)^{\top} R_c u (t) 
	\right)dt,
	\end{align*}
	we obtain the desired expression \eqref{eq:Jk_QSR}.
\end{proof}

For the discrete-time Markov jump system \eqref{eq:discretized_sys},
we define the infinite-horizon discrete-time 
quadratic cost functional $\mathcal{J}_{d1}$
by
\begin{align}
\label{eq:Jd1}
\mathcal{J}_{d1}(\widehat \mu ,u) := 
\sum_{k=0}^{\infty}
E\left(
\begin{bmatrix}
\xi_k \\ u_k
\end{bmatrix}^{\top}
\begin{bmatrix}
Q(\phi_k) & W(\phi_k) \\ \star & R(\phi_k)
\end{bmatrix}
\begin{bmatrix}
\xi_k \\ u_k
\end{bmatrix}
\right).
\end{align}
Then a solution to Problem \ref{prob:LQ} for sampled-data systems with stochastic delays
can be obtained as a solution to the following problem for discrete-time Markov jump systems:
\begin{problem}
	\label{prob:LQ_dis}
	Consider a discrete-time Markov jump system \eqref{eq:discretized_sys}, and let
	Assumptions \ref{assump:delay} and \ref{assump:initial_dist} hold.
	Find an optimal control law $u^{\rm{opt}} \in \mathcal{U}_d$ that achieves
	$\mathcal{J}_{d1}(\widehat \mu,u^{\rm{opt}}) = \inf_{u \in \mathcal{U}_d} 
	\mathcal{J}_{d1}(\widehat \mu,u)$
	for every initial distribution $\widehat \mu$ satisfying Assumption~\ref{assump:initial_dist}.
\end{problem}

\begin{lemma}
	\label{lem:reduction1}
	A control input 
	$u^{\rm{opt}} $ is a solution to Problem \ref{prob:LQ} if and only if
	$u^{\rm{opt}} $ is also a solution to
	Problem \ref{prob:LQ_dis} where the system matrices $A$, $B$ and 
	the weighting matrices $Q$, $W$, $R$ are
	defined by \eqref{eq:A_B_def} and \eqref{eq:QWR_def}.
	%Assume that in Problem \ref{prob:LQ_dis},
	%$u_{-1} \in \mathbb{R}^{n \times m}$ in an initial state $\xi_0 = \begin{bmatrix}
	%x_0 \\ u_{-1}
	%\end{bmatrix}$
	%is deterministic.
	%If a control input 
	%$u$ is a solution to Problem \ref{prob:LQ_dis} where
	%$u_{-1} \in \mathbb{R}^{m}$ in an initial state $\xi_0 = \begin{bmatrix}
	%x_0 \\ u_{-1}
	%\end{bmatrix}$
	%is deterministic, then
	%$u$ is also a solution to
	%Problem \ref{prob:LQ} with the initial control input $u_{-1}$.
\end{lemma}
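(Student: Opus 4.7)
The plan is to show that the two optimization problems have the same feasible set and the same objective, so that their optimizers coincide. The equality of feasible sets, $\mathcal{U}_c = \mathcal{U}_d$, is already Lemma~\ref{lem:Ud_Uc} (and the filtrations $\{\mathcal{F}_k\}$ and $\{\mathcal{F}_k^d\}$ coincide by construction, so $\mathcal{F}_k$-measurability and $\mathcal{F}_k^d$-measurability of $u_k$ are the same condition). Thus the core of the proof is to establish the cost identity
\[
\mathcal{J}_c(\widehat\mu,u) \;=\; \mathcal{J}_{d1}(\widehat\mu,u) \qquad \forall u\in \mathcal{U}_c = \mathcal{U}_d .
\]

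First, I would use the fact that the integrand $x(t)^\top Q_c x(t) + u(t)^\top R_c u(t)$ is pointwise non-negative, together with the splitting $[0,\infty) = \bigcup_{k\geq 0}[kh,(k+1)h)$, to write the continuous-time cost pathwise as
\[
\int_0^\infty \!\!\bigl(x(t)^\top Q_c x(t) + u(t)^\top R_c u(t)\bigr)dt
\;=\; \sum_{k=0}^\infty \mathcal{J}_k ,
\]
where $\mathcal{J}_k$ is the per-period cost from \eqref{eq:J_k_def}. Because every $\mathcal{J}_k$ is non-negative, Tonelli's theorem lets me interchange expectation and the infinite sum, giving
\[
\mathcal{J}_c(\widehat\mu,u) \;=\; E\!\left(\sum_{k=0}^\infty \mathcal{J}_k\right) \;=\; \sum_{k=0}^\infty E(\mathcal{J}_k).
\]
This step is the only place where analytical care is needed, and non-negativity makes it immediate.

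Next I would substitute the quadratic-form expression of Lemma~\ref{lem:reduction_DT_LQ_cost}, namely
\[
\mathcal{J}_k \;=\;
\begin{bmatrix}\xi_k \\ u_k\end{bmatrix}^{\!\top}
\begin{bmatrix}Q(\phi_k) & W(\phi_k) \\ \star & R(\phi_k)\end{bmatrix}
\begin{bmatrix}\xi_k \\ u_k\end{bmatrix},
\]
so that $\sum_k E(\mathcal{J}_k)$ equals $\mathcal{J}_{d1}(\widehat\mu,u)$ by the definition \eqref{eq:Jd1}. Here I would also note that the initial condition $\xi_0$ specified by \eqref{eq:discrete_state} is precisely the state of the discretized system driven from the same $(x_0,u_{-1})$, so the trajectories of \eqref{eq:discretized_sys} agree with the sampled values of the trajectories of \eqref{eq:plant_dynamics}--\eqref{eq:input}; this ensures that the same $u$ produces the same sequence $\{\xi_k\}$ in both formulations, hence the same per-stage cost.

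With the cost identity in hand, the ``if and only if'' conclusion is immediate: since $\mathcal{U}_c = \mathcal{U}_d$ and $\mathcal{J}_c(\widehat\mu,\cdot) = \mathcal{J}_{d1}(\widehat\mu,\cdot)$ on this common set, the infima agree and a minimizer of one functional is automatically a minimizer of the other, for every initial distribution $\widehat\mu$ satisfying Assumption~\ref{assump:initial_dist}. There is no real obstacle in this proof; the only point requiring a moment of justification is the interchange of expectation and infinite sum, which is handled by Tonelli thanks to non-negativity of the instantaneous cost.
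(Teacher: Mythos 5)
Your proposal is correct and follows essentially the same route as the paper's proof: equality of the admissible classes via Lemma~\ref{lem:Ud_Uc}, the non-negativity-justified interchange of expectation and summation, and the per-period quadratic-form identity from Lemma~\ref{lem:reduction_DT_LQ_cost}. Your explicit appeal to Tonelli's theorem simply makes precise an interchange the paper performs without naming it.
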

\begin{proof}
	Since $\mathcal{U}_c = \mathcal{U}_d$
	from Lemma \ref{lem:Ud_Uc}, it follows that 
	$u^{\rm{opt}}  \in \mathcal{U}_c$ if and only if $u^{\rm{opt}}  \in \mathcal{U}_d$.
	Let $x$ and $\xi$ be the solutions of the sampled-data system \eqref{eq:plant_dynamics} and
	\eqref{eq:input} and of the discrete-time system 
	\eqref{eq:discretized_sys} with the initial state $\xi_0$ defined by \eqref{eq:discrete_state}, receptively.
	Since 
	%the integrand in $J_c$ is non-negative, 
	\[
	x(t)^{\top} Q_c x(t) + u(t)^{\top} R_c u (t) \geq 0\qquad \forall t \geq 0,
	\]
	we obtain
	\begin{align*}
	\mathcal{J}_c(\widehat \mu,u) 
	&=
	E
	\left(
	\int^{\infty}_{0}
	%\left(
	x(t)^{\top} Q_c x(t) + u(t)^{\top} R_c u (t) 
	%\right)
	dt
	\right) \\
	&=
	E	
	\sum_{k=0}^{\infty}
	\left(
	\int^{(k+1)h}_{kh}
	%\left(
	x(t)^{\top} Q_c x(t) + u(t)^{\top} R_c u (t) 
	%\right)
	dt
	\right) \\
	&=
	\sum_{k=0}^{\infty}
	E\left(
	\int^{(k+1)h}_{kh}
	%\left(
	x(t)^{\top} Q_c x(t) + u(t)^{\top} R_c u (t) 
	%\right)
	dt
	\right).
	\end{align*}
	It also follows from Lemma \ref{lem:reduction_DT_LQ_cost} that
	\begin{align*}
	&\sum_{k=0}^{\infty}
	E\left(
	\int^{(k+1)h}_{kh}
	%\left(
	x(t)^{\top} Q_c x(t) + u(t)^{\top} R_c u (t) 
	%\right)
	dt
	\right)\\
	&\qquad  = 
	\sum_{k=0}^{\infty}
	E\left(
	\begin{bmatrix}
	\xi_k \\ u_k
	\end{bmatrix}^{\top}
	\begin{bmatrix}
	Q(\phi_k) & W(\phi_k) \\ \star & R(\phi_k)
	\end{bmatrix}
	\begin{bmatrix}
	\xi_k \\ u_k
	\end{bmatrix}
	\right) = \mathcal{J}_{d1}(\widehat \mu,u).
	\end{align*}
	Thus if a control input 
	$u^{\rm{opt}}  \in \mathcal{U}_c$ is a solution to Problem \ref{prob:LQ}, then $u^{\rm{opt}} $ satisfies
	$u^{\rm{opt}} \in \mathcal{U}_d$ and is a solution to
	Problem \ref{prob:LQ_dis} 
	%	with the system matrices $A, B$ 
	%	and 
	where the system matrices $A$, $B$ and 
	the weighting matrices $Q$, $W$, $R$ are defined by \eqref{eq:A_B_def}, \eqref{eq:QWR_def}, 
	and vice versa.
\end{proof}

Let us next remove the cross term of the cost function $\mathcal{J}_{d1}$.
To this end, as in the deterministic case \cite[Section~3.4]{Anderson1990}, we transform 
$u_k$ into $\bar u_k$
in the following way:
\begin{equation}
\label{eq:input_transformation}
\bar u_k = u_k + R(\phi_k)^{-1}W(\phi_k)^{\top} \xi_k\qquad \forall k \in \mathbb{Z}_+.
\end{equation}
Since $Q_c \geq 0$, $R_c > 0$, and $h -\tau_{\max} > 0$, 
it follows that $R(\phi)$ in \eqref{eq:QWR_def} 
is invertible for all $\phi \in \mathcal{M}$. Therefore
the right-hand side of \eqref{eq:input_transformation} is well-defined for all $\phi_k \in \mathcal{M}$.

Define 
\begin{equation}
\label{eq:tilde_A_def}
\bar A(\phi) := A(\phi) - B(\phi)R(\phi)^{-1}W(\phi)^{\top} \qquad \forall \phi\in \mathcal{M}
\end{equation}
and let $C(\phi)$ and $D(\phi)$ be the matrices obtained from the following Cholesky
decompositions:
\begin{subequations}
	\label{eq:Cholesky_decomposition}
	\begin{align}
	C(\phi)^{\top}C(\phi) &= Q(\phi) - W(\phi) R(\phi)^{-1} W(\phi)^{\top} 
	\qquad \forall \phi \in \mathcal{M} \\
	D(\phi)^{\top}D(\phi) &= R(\phi) 
	\qquad \forall \phi \in \mathcal{M}.
	\end{align}
\end{subequations}
These Cholesky decompositions are possible if 
the weighting matrix in Lemma~\ref{lem:reduction_DT_LQ_cost}
satisfies 
\begin{equation}
\label{eq:weighting_matrix_SPD}
\begin{bmatrix}
Q(\phi) & W(\phi) \\ \star & R(\phi)
\end{bmatrix} \geq 0
\qquad  \forall \phi \in \mathcal{M}.
\end{equation}
This is because $R(\phi) > 0$ for every  $\phi \in \mathcal{M}$ and 
the Schur complement formula leads to
\[
Q(\phi) - W(\phi) R(\phi)^{-1} W(\phi)^{\top}  \geq 0\qquad  \forall \phi \in \mathcal{M}.
\]
Under the transformation \eqref{eq:input_transformation}, we obtain the following result:
\begin{lemma}
	\label{lem:reduction2}
	Assume that the weighting matrices $Q$, $W$, $R$ in \eqref{eq:QWR_def}
	satisfy the inequality \eqref{eq:weighting_matrix_SPD}.
	A control input $u^{\rm{opt}}$ is a solution to
	Problem \ref{prob:LQ_dis} with the system \eqref{eq:discretized_sys}
	and the LQ cost \eqref{eq:Jd1} if and only if
	$\bar u^{\rm{opt}}$ with $\bar u^{\rm{opt}}_k = u^{\rm{opt}}_k + R(\phi_k)^{-1}W(\phi_k)^{\top} \xi_k$
	is a solution to Problem \ref{prob:LQ_dis} where the Markov jump system is given by
	\begin{equation}
	\label{eq:state_eq_til_u}
	\xi_{k+1} =  \bar A(\phi_k)
	\xi_k + B(\phi_k) \bar u_k
	\end{equation}
	and the LQ cost by
	\begin{equation}
	\label{eq:Jc_til_u}
	\mathcal{J}_{d2}(\widehat \mu, \bar u) := 
	\sum_{k=0}^{\infty} E\left(
	\|C(\phi_k) \xi_k\|^2 + \|D(\phi_k) \bar u_k\|^2
	\right).
	\end{equation}
	Here the matrices $\bar A$, $C$, and $D$ are defined as in 
	\eqref{eq:tilde_A_def} and \eqref{eq:Cholesky_decomposition}.
\end{lemma}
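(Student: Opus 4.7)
The proof naturally splits into three tasks: verify the dynamics match, verify the cost expressions match term by term, and check that the admissible class is preserved by the bijective change of variables $u_k \leftrightarrow \bar u_k$.

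First, I would substitute $u_k = \bar u_k - R(\phi_k)^{-1} W(\phi_k)^{\top} \xi_k$ into the dynamics \eqref{eq:discretized_sys}. Collecting terms on $\xi_k$ gives $A(\phi_k) - B(\phi_k) R(\phi_k)^{-1} W(\phi_k)^{\top} = \bar A(\phi_k)$, which matches \eqref{eq:tilde_A_def}. Hence the trajectory of $\xi_k$ under original system/input $u$ is identical to its trajectory under the transformed system/input $\bar u$. This identification of trajectories is what makes the admissibility classes match in the third step.

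Second, for the per-stage cost, I would perform a standard completion-of-squares. Writing the quadratic form as $\xi_k^{\top} Q(\phi_k) \xi_k + 2 \xi_k^{\top} W(\phi_k) u_k + u_k^{\top} R(\phi_k) u_k$ and substituting $u_k = \bar u_k - R(\phi_k)^{-1} W(\phi_k)^{\top} \xi_k$, the cross terms cancel and one is left with
\[
\xi_k^{\top} \bigl(Q(\phi_k) - W(\phi_k) R(\phi_k)^{-1} W(\phi_k)^{\top}\bigr) \xi_k + \bar u_k^{\top} R(\phi_k) \bar u_k,
\]
which equals $\|C(\phi_k) \xi_k\|^2 + \|D(\phi_k) \bar u_k\|^2$ by \eqref{eq:Cholesky_decomposition}. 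The hypothesis \eqref{eq:weighting_matrix_SPD} together with $R(\phi) > 0$ ensures, via the Schur complement, that the Cholesky factor $C(\phi)$ exists. Summing over $k$ and taking expectation transforms $\mathcal{J}_{d1}(\widehat\mu,u)$ into $\mathcal{J}_{d2}(\widehat\mu,\bar u)$ termwise.

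Third, I need to show $u \in \mathcal{U}_d$ (for \eqref{eq:discretized_sys}) if and only if $\bar u \in \mathcal{U}_d$ (for \eqref{eq:state_eq_til_u}). Since $R(\phi)$ and $W(\phi)$ are continuous on the compact set $\mathcal{M}$ and $R(\phi)$ is uniformly positive definite there, the feedback gain $R(\phi)^{-1} W(\phi)^{\top}$ is bounded; combined with the measurability of $\phi_k$ with respect to $\mathcal{F}_k^d$, this implies that $u_k$ is $\mathcal{F}_k^d$-measurable if and only if $\bar u_k$ is. Moreover, because the two state equations produce the same trajectory $\xi_k$ under the paired inputs, the convergence condition $E(\|\xi_k\|^2)\to 0$ is preserved, so the two admissibility classes coincide under this correspondence.

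Combining the three steps yields the biconditional: the objectives are equal along paired trajectories, and the admissible sets correspond bijectively, so a minimizer of one problem is exactly a minimizer of the other under the affine change of variables. The only mild subtlety I anticipate is the uniform bound on $R(\phi)^{-1} W(\phi)^{\top}$ required to transfer measurability and the stability condition between $u$ and $\bar u$; this rests on compactness of $\mathcal{M}$ and the strict positivity $h - \tau_{\max} > 0$, both already available from the problem setup.
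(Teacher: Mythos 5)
Your proposal is correct and follows essentially the same route as the paper: identify the trajectories of \eqref{eq:discretized_sys} and \eqref{eq:state_eq_til_u} under the affine input change, then show $\mathcal{J}_{d1}(\widehat\mu,u)=\mathcal{J}_{d2}(\widehat\mu,\bar u)$ by completion of squares together with the Cholesky factorizations \eqref{eq:Cholesky_decomposition}. Your explicit remark on transferring $\mathcal{F}_k^d$-measurability via the boundedness of $R(\phi)^{-1}W(\phi)^{\top}$ is a small point the paper leaves implicit, but it does not change the argument.
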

\begin{proof}
	Let $\xi$ and $\bar \xi$ be the solutions of the difference equations 
	\eqref{eq:discretized_sys} and \eqref{eq:state_eq_til_u} with the same Markov parameter $\phi$.
	If $\bar u_k$ satisfies \eqref{eq:input_transformation} and if $\xi_k = \bar \xi_k$, then
	\begin{align*}
	\xi_{k+1} &= A(\phi_k)\xi_k + B(\phi_k) u_k  \\
	&=A(\phi_k) \xi_k + B(\phi_k) (\bar u_k - R(\phi_k)^{-1}W(\phi_k)^{\top} \xi_k) \\
	&=   \bar A(\phi_k)
	\bar \xi_k + B(\phi_k) \bar u_k = \bar \xi_{k+1}.
	\end{align*}
	Therefore,  if $\xi_0 = \bar \xi_0$, then
	$
	\xi_k = \bar \xi_k
	$
	for every $ k \in \mathbb{Z}_+$.
	Thus $u \in \mathcal{U}_d$ for the system \eqref{eq:discretized_sys}
	if and only if $\bar u \in \mathcal{U}_d$ for the system \eqref{eq:state_eq_til_u}.	
	%	$\bar u_k$ satisfies \eqref{eq:input_transformation} and if $\xi_k = \bar \xi_k$
	Moreover, if \eqref{eq:input_transformation} holds, then
	\begin{align*}
	\mathcal{J}_{d2}(\widehat \mu, \bar u) 
	&= 
	\sum_{k=0}^{\infty} E\left(
	\begin{bmatrix}
	\bar \xi_k \\ \bar u_k
	\end{bmatrix}^{\top}
	\begin{bmatrix}
	C^{\top}(\phi_k)C(\phi_k) & 0\\ \star & D^{\top}(\phi_k)D(\phi_k) 
	\end{bmatrix}
	\begin{bmatrix}
	\bar \xi_k \\ \bar u_k
	\end{bmatrix}
	\right) \\
	&=
	\sum_{k=0}^{\infty} E\left(
	\begin{bmatrix}
	\bar \xi_k \\ \bar u_k
	\end{bmatrix}^{\top}
	\begin{bmatrix}
	Q(\phi_k) - W(\phi_k) R(\phi_k)^{-1} W(\phi_k)^{\top}  & 0\\ \star & R(\phi_k) 
	\end{bmatrix}
	\begin{bmatrix}
	\bar \xi_k \\ \bar u_k
	\end{bmatrix}
	\right) \\
	&=
	\sum_{k=0}^{\infty} E\left(
	\begin{bmatrix}
	\xi_k \\ u_k
	\end{bmatrix}^{\top}
	\begin{bmatrix}
	Q(\phi_k)& W(\phi_k) \\ \star & R(\phi_k) 
	\end{bmatrix}
	\begin{bmatrix}
	\xi_k \\ u_k
	\end{bmatrix}
	\right) = 	\mathcal{J}_{d1}(\widehat \mu, u). 
	\end{align*}
	Hence 
	$u^{\rm{opt}}$ is a solution to
	Problem \ref{prob:LQ_dis} with the system \eqref{eq:discretized_sys}
	and the LQ cost \eqref{eq:Jd1} if and only if
	$\bar u^{\rm{opt}}$ with $\bar u^{\rm{opt}}_k = u^{\rm{opt}}_k + R(\phi_k)^{-1}W(\phi_k)^{\top} \xi_k$
	is a solution to Problem \ref{prob:LQ_dis} with the system
	\eqref{eq:state_eq_til_u} and
	the LQ cost \eqref{eq:Jc_til_u}.
\end{proof}

Finally
we can reduce the LQ problem \ref{prob:LQ}
for sampled-data systems with stochastic delays
to the LQ problem \ref{prob:LQ_dis}  for discrete-time 
Markov jump systems and LQ costs in the form \eqref{eq:Jc_til_u}.
\begin{theorem}
	\label{thm:reduction}
	A control input 
	$u^{\rm{opt}} $ is a solution to Problem \ref{prob:LQ} if and only if
	$\bar u^{\rm{opt}} $ with $\bar u^{\rm{opt}}_k = u^{\rm{opt}}_k + R(\phi_k)^{-1}W(\phi_k)^{\top} \xi_k$ is a solution to
	Problem \ref{prob:LQ_dis} for the Markov jump system \eqref{eq:state_eq_til_u}
	and the LQ cost \eqref{eq:Jc_til_u}, where the matrices $\bar A$, $C$, and $D$ are obtained in
	\eqref{eq:tilde_A_def} and \eqref{eq:Cholesky_decomposition}
	from the matrices $A$, $B$, $Q$, $W$, and $R$ defined by  \eqref{eq:A_B_def} and \eqref{eq:QWR_def}.
\end{theorem}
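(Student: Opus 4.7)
The plan is to obtain the theorem as a direct composition of Lemma \ref{lem:reduction1} and Lemma \ref{lem:reduction2}. Lemma \ref{lem:reduction1} already gives the first reduction step, identifying solutions of Problem \ref{prob:LQ} with solutions of Problem \ref{prob:LQ_dis} for the discrete-time system \eqref{eq:discretized_sys} with cost \eqref{eq:Jd1} built from the matrices $A$, $B$, $Q$, $W$, $R$ of \eqref{eq:A_B_def}--\eqref{eq:QWR_def}. Lemma \ref{lem:reduction2} then pushes this further, rewriting the Markov jump LQ problem in the ``square-completed'' form \eqref{eq:state_eq_til_u}--\eqref{eq:Jc_til_u}, provided its hypothesis \eqref{eq:weighting_matrix_SPD} on $(Q,W,R)$ is in force. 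Chaining the two equivalences and tracking the input change $\bar u_k = u_k + R(\phi_k)^{-1}W(\phi_k)^\top \xi_k$ yields the statement.

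The only genuinely non-bookkeeping step is to verify that \eqref{eq:weighting_matrix_SPD} actually holds for the weighting matrices produced in \eqref{eq:QWR_def}, which is what allows Lemma \ref{lem:reduction2} to be invoked. I would extract this from Lemma \ref{lem:reduction_DT_LQ_cost}. Fix an arbitrary $\phi \in \mathcal{M}$ and consider a single-step sample realization in which $\phi_0 = \phi$ is deterministic and $\xi_0 \in \mathbb{R}^{n+m}$, $u_0 \in \mathbb{R}^m$ are arbitrary deterministic vectors. Since the continuous-time integrand $x(t)^\top Q_c x(t) + u(t)^\top R_c u(t)$ in $\mathcal{J}_0$ is pointwise nonnegative, we have $\mathcal{J}_0 \geq 0$, while Lemma \ref{lem:reduction_DT_LQ_cost} identifies $\mathcal{J}_0$ with the quadratic form in \eqref{eq:Jk_QSR} evaluated at $(\xi_0,u_0,\phi)$. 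Since $\xi_0$ and $u_0$ range over all of $\mathbb{R}^{n+m}\times\mathbb{R}^m$, this forces
\[
\begin{bmatrix} Q(\phi) & W(\phi) \\ \star & R(\phi) \end{bmatrix} \geq 0,
\]
and since $\phi \in \mathcal{M}$ was arbitrary, \eqref{eq:weighting_matrix_SPD} is established. Moreover, as already noted right after \eqref{eq:input_transformation}, the fact that $Q_c \geq 0$, $R_c > 0$ and $h - \tau_{\max} > 0$ guarantees $R(\phi) > 0$, so the Cholesky factorizations in \eqref{eq:Cholesky_decomposition} and the feedback transformation $\bar u_k = u_k + R(\phi_k)^{-1}W(\phi_k)^\top \xi_k$ are both well-defined on all of $\mathcal{M}$.

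With \eqref{eq:weighting_matrix_SPD} in hand, the proof reduces to assembling the two equivalences. Applying Lemma \ref{lem:reduction1}, $u^{\rm opt}$ solves Problem \ref{prob:LQ} iff $u^{\rm opt}$ solves Problem \ref{prob:LQ_dis} for \eqref{eq:discretized_sys} with cost \eqref{eq:Jd1}; applying Lemma \ref{lem:reduction2} to this intermediate problem, this is in turn equivalent to $\bar u^{\rm opt}_k = u^{\rm opt}_k + R(\phi_k)^{-1}W(\phi_k)^\top \xi_k$ solving Problem \ref{prob:LQ_dis} for the transformed system \eqref{eq:state_eq_til_u} with cost \eqref{eq:Jc_til_u}. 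This is exactly the claim of the theorem. The main obstacle, insofar as there is one, is the verification of \eqref{eq:weighting_matrix_SPD} just outlined; everything else is a mechanical chaining of previously established equivalences.
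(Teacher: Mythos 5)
Your proof is correct and follows essentially the same route as the paper, which likewise obtains the theorem by chaining Lemmas \ref{lem:reduction1} and \ref{lem:reduction2}. Your explicit verification of the hypothesis \eqref{eq:weighting_matrix_SPD} --- by evaluating the pathwise identity of Lemma \ref{lem:reduction_DT_LQ_cost} at arbitrary deterministic $(\xi_0,u_0,\phi)$ and using the pointwise nonnegativity of the integrand of $\mathcal{J}_0$ --- is a detail the paper leaves implicit when calling the theorem ``an immediate consequence'' of the two lemmas, and it is a valid and welcome addition.
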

\begin{proof}
	This is an immediate consequence of
	Lemmas \ref{lem:reduction1} and \ref{lem:reduction2}.
\end{proof}

\begin{remark}
	\label{rem:uniqueness_CD}
	{ \em
		If, in \eqref{eq:weighting_matrix_SPD}, we have strict positive definiteness
		\begin{equation}
		\label{eq:weighting_matrix}
		\begin{bmatrix}
		Q(\phi) & W(\phi) \\ \star & R(\phi)
		\end{bmatrix} > 0
		\qquad  \forall \phi \in \mathcal{M}
		\end{equation}
		instead of the semidefiniteness, then
		\[
		Q(\phi) - W(\phi) R(\phi)^{-1} W(\phi)^{\top} > 0
		\qquad  \forall \phi \in \mathcal{M}
		\]
		by the Schur complement formula. 
		In this case, $C(\phi)$ and $D(\phi)$,
		derived from
		the Cholesky decompositions \eqref{eq:Cholesky_decomposition},
		are unique in the following sense:
		For all $\phi \in \mathcal{M}$,
		there exist unique upper triangular matrices
		$C(\phi)$ and $D(\phi)$ with strictly 
		positive diagonal entries such that \eqref{eq:Cholesky_decomposition} holds.
		Moreover, 
		$C(\phi)$, $D(\phi)$
		are continuous with respect to $\phi$.  See, e.g., Chapters 9 and 12 of \cite{Schatzman2002}.
		Thus $C$ and $D$ satisfies $C \in \mathbb{H}^{n \times n}_{\sup} $ and $D \in \mathbb{H}^{m \times m}_{\sup} $.
	}
\end{remark}

%\begin{remark}
%	{ \em
%	Although
%	the authors in \cite{Nilsson1997,Kordonis2014} also designed LQ optimal controllers
%	by transforming time-delays systems to discrete-time Markov jump systems,
%	only discrete-time LQ criteria were considered.
%	}
%\end{remark}

\section{LQ control for Discrete-time Markov Jump Systems}
In the previous section,
we reduced the LQ problem \ref{prob:LQ}
for sampled-data systems with stochastic delays
into the LQ problem \ref{prob:LQ_dis}  for discrete-time 
Markov jump systems and LQ costs in the form \eqref{eq:Jc_til_u}.
In this section, we recall results from \cite{Costa2015} 
on such an LQ problem for Markov jump systems.

First we define stochastic stability 
for discrete-time Markov jump linear systems. 
On a probability space $(\Omega, \mathcal{F}, P)$,
consider the following autonomous system
\begin{equation}
\label{eq:automous_sys}
\xi_{k+1} = A(\phi_k)\xi_k,
\end{equation} 
where $A \in {\mathbb H}^{n}_{\sup}$ and 
the sequence $\{ \phi_k:k \in \mathbb{Z}_+\}$ is
a time-homogeneous Markov chain in a Borel space $\mathcal{M}$.
Throughout this section, we assume that
the initial distribution $\widehat \mu$
of $(\xi_0,\phi_0)$, which is a probability measure on $\mathbb{R}^n \times \mathcal{M}$,
satisfies the following conditions analogous to the ones in Assumption \ref{assump:initial_dist}:
A1')
$E(\|\xi_0\|^2) < \infty$ and A2')
$\widehat \mu_{\mathcal{M}}(\bullet) = \widehat \mu(\mathbb{R}^n \times \bullet)$ 
is absolutely continuous with respect to $\mu$.

\begin{definition}[Stochastic stability, \cite{Costa2014}]
	%		Let the sequence $\{ \phi_k:k \in \mathbb{Z}_+\}$ be 
	%		a time-homogeneous Markov chain 
	%		taking values in a Borel space $\mathcal{M}$ 
	%		and
	%		with transition probability kernel $\mathcal{G}(\bullet | \bullet)$ having
	%		a density $g(\bullet | \bullet)$ with respect to a $\sigma$-finite measure $\mu$
	%		on $\mathcal{M}$.
	The autonomous Markov jump linear system \eqref{eq:automous_sys}
	is said to be stochastically stable if
	$\sum_{k=0}^{\infty} E(\|\xi_k\|^2) < \infty$ for any initial distribution $\widehat \mu$
	satisfying A1') and A2').
	%	We say that system \eqref{eq:automous_sys} is stochastically stable
	%	for real initial state variables $x_0$ if
	%	$\sum_{k=0}^{\infty} E(\|x(k)\|^2) < \infty$ for any initial condition
	%	$)(x_0,\theta_0)$ satisfying .... and $x_0 \in \mathbb{R}^n$.
\end{definition}

Let $g(\bullet,\bullet)$ be the density function 
with respect to a $\sigma$-finite measure $\mu$
on $\mathcal{M}$ 
for the transition of the  Markov chain 
$\{\phi_k:k \in \mathbb{Z}_+ \}$ as in Assumption \ref{assump:delay}.
For every $A \in {\mathbb H}^n_{\sup}$,
%For all $\phi \in \mathcal{M}$
%and all $V \in \mathbb{H}^n_1$, $S \in \mathbb{H}^n_{\sup}$,
define an operator $\mathcal{L}_A: \mathbb{H}^n_{1,\mathbb{C}} \to  \mathbb{H}^n_{1,\mathbb{C}} $ by
\begin{align}
\label{eq:L_def}
\mathcal{L}_{A}(V)(\bullet) &:= 
\int_{\mathcal{M}} g(\ell, \bullet) A(\ell) V(\ell) A(\ell) ^{\top} \mu(d\ell).
\end{align}

We recall a relationship among stochastic stability, 
the spectral radius $r_{\sigma}(\mathcal{L}_{A})$, and
a Lyapunov inequality condition.
\begin{theorem}[\hspace{-3.5pt} \cite{Costa2014}]
	\label{thm:iff_conds}
	The following assertions are equivalent:
	\begin{enumerate}
		\item The system \eqref{eq:automous_sys} is stochastically stable.
		%	\item System \eqref{eq:automous_sys} is stochastically stable for
		%	real initial state variables $x_0$.
		\item The spectral radius $r_{\sigma}(\mathcal{L}_A) < 1$, where
		$\mathcal{L}_A$ is defined as in \eqref{eq:L_def}.
		\item
		There exist $S \in \mathbb{H}^{n+}_{\sup,\mathbb{C}}$ and $\epsilon >0$
		such that the following Lyapunov inequality 
		holds for $\mu$-almost every $\phi \in \mathcal{M}$:
	\end{enumerate}
	\begin{equation}
	\label{eq:Lyap_type_iff}
	S(\phi) - 
	A(\phi)^{\top} 
	\left(
	\int_{\mathcal{M}} g(\phi,\ell) S(\ell)\mu(d\ell)
	\right)
	A(\phi) \geq \epsilon I.
	\end{equation}
\end{theorem}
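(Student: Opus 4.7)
The plan is to prove the chain (1) $\Leftrightarrow$ (2) $\Leftrightarrow$ (3) by recognizing $\mathcal{L}_A$ as the second-moment propagator of the system and exploiting Lyapunov theory for positive operators on the ordered Banach space $\mathbb{H}^n_{1,\mathbb{C}}$ (with order dual $\mathbb{H}^n_{\sup,\mathbb{C}}$).

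For (1) $\Leftrightarrow$ (2), I would first introduce the matrix-valued density of the second moment,
\[
V_k(\phi) \mu(d\phi) := E\bigl(\xi_k \xi_k^{\top} \mathbf{1}_{\phi_k \in d\phi}\bigr),
\]
which is well defined in $\mathbb{H}^{n+}_1$ by A1') and A2'). A direct conditional-expectation computation using $\xi_{k+1} = A(\phi_k)\xi_k$ and Bayes' formula for the Markov kernel gives the forward recursion $V_{k+1} = \mathcal{L}_A(V_k)$, together with the identity
\[
E(\|\xi_k\|^2) = \int_{\mathcal{M}} \mathrm{tr}\bigl(\mathcal{L}_A^k(V_0)(\phi)\bigr) \mu(d\phi).
\]
Since initial distributions satisfying A1')--A2') generate all $V_0 \in \mathbb{H}^{n+}_1$ (up to a normalization), the stochastic stability condition $\sum_k E(\|\xi_k\|^2) < \infty$ for every admissible initial data is equivalent to summability of $\{\mathcal{L}_A^k\}$ in operator norm on $\mathbb{H}^n_{1,\mathbb{C}}$, and hence, by Gelfand's formula, to $r_\sigma(\mathcal{L}_A) < 1$.

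For (2) $\Leftrightarrow$ (3), I would identify $\mathbb{H}^n_{\sup,\mathbb{C}}$ as a predual-style space of $\mathbb{H}^n_{1,\mathbb{C}}$ under the duality pairing $\langle S,V\rangle := \int_{\mathcal{M}} \mathrm{tr}(S(\phi)^{*}V(\phi))\mu(d\phi)$, and compute the adjoint of $\mathcal{L}_A$ by swapping the order of integration (this is Fubini, legal by the boundedness of $A$ and the $L^1$ character of $V$):
\[
\mathcal{T}_A(S)(\phi) := A(\phi)^{\top}\Bigl(\int_{\mathcal{M}} g(\phi,\ell) S(\ell)\mu(d\ell)\Bigr) A(\phi).
\]
Then $r_\sigma(\mathcal{L}_A) = r_\sigma(\mathcal{T}_A)$ and the inequality \eqref{eq:Lyap_type_iff} reads exactly $S - \mathcal{T}_A(S) \geq \epsilon I$. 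For the direction (2) $\Rightarrow$ (3), I take $S := \sum_{k=0}^{\infty}\mathcal{T}_A^k(I)$, which converges in $\mathbb{H}^{n+}_{\sup,\mathbb{C}}$ because $\mathcal{T}_A$ is a bounded positive operator with spectral radius strictly less than one; this $S$ satisfies $S - \mathcal{T}_A(S) = I$, so $\epsilon = 1$ works. For (3) $\Rightarrow$ (2), given the inequality I obtain $\mathcal{T}_A(S) \leq S - \epsilon I \leq (1 - \epsilon/\|S\|_\infty) S$ in the essentially-bounded sense, and iteration gives $\mathcal{T}_A^k(S) \leq (1 - \epsilon/\|S\|_\infty)^k S$, whence $r_\sigma(\mathcal{T}_A) < 1$.

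The main obstacle is functional-analytic hygiene rather than any single estimate: (i) verifying that $\mathcal{L}_A$ maps $\mathbb{H}^n_{1,\mathbb{C}}$ to itself boundedly and similarly for $\mathcal{T}_A$ on $\mathbb{H}^n_{\sup,\mathbb{C}}$, using the essential boundedness of $A$ together with $\int g(\phi,\ell)\mu(d\ell) = 1$; (ii) justifying the adjoint identification over a Borel space with the $\sigma$-finite reference measure $\mu$ (rather than a purely atomic one); and (iii) making rigorous the passage from ``stochastic stability for every admissible initial distribution'' to an operator-norm summability on the positive cone, which requires showing that the positive cone is ``large enough'' to detect operator norm via a uniform boundedness / closed graph argument. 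Once these points are handled, the Lyapunov-type equivalence is the classical Neumann-series / iteration argument for positive operators ordered by the cone $\mathbb{H}^{n+}_{\sup,\mathbb{C}}$.
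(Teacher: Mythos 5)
The paper offers no proof of this theorem: it is quoted verbatim from \cite{Costa2014}, so there is nothing internal to compare your argument against. Your sketch is a correct reconstruction of the standard argument in that reference --- the second-moment density recursion $V_{k+1}=\mathcal{L}_A(V_k)$ with $E(\|\xi_k\|^2)=\int\mathrm{tr}(V_k)\,d\mu$, uniform boundedness to pass from pointwise summability on the positive cone to $r_\sigma(\mathcal{L}_A)<1$, and the Neumann-series/iteration argument for the adjoint $\mathcal{T}_A$ --- and the three ``hygiene'' points you flag are exactly the ones that need care. The only quibble is terminological: $\mathbb{H}^{n}_{\sup,\mathbb{C}}$ is the \emph{dual} of $\mathbb{H}^{n}_{1,\mathbb{C}}$ (not a predual), but your computation of $\mathcal{T}_A=\mathcal{L}_A^{*}$ and the identity $r_\sigma(\mathcal{T}_A)=r_\sigma(\mathcal{L}_A)$ are unaffected.
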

Note that the matrix $S$ in the statement 3 is  a complex-valued function,
because the system matrix $A$ is a complex-valued function in \cite{Costa2014}.
However,
for a real-valued function $A$,
it is enough to find a real-valued $S$ in the statement 3.
\begin{proposition}
	\label{prop:real_case}
	For a real-valued function $A \in \mathbb{H}^{n}_{\sup}$,
	the statement 3 in Theorem \ref{thm:iff_conds} is equivalent to
	\begin{enumerate}
		\renewcommand{\labelenumi}{\arabic{enumi}'.}
		\setcounter{enumi}{2}
		{\em
			\item {\em There exist $S \in \mathbb{H}^{n+}_{\sup}$ and $\epsilon >0$
				such that the Lyapunov inequality \eqref{eq:Lyap_type_iff}
				holds for $\mu$-almost every $\phi \in \mathcal{M}$.
			}
		}
	\end{enumerate}
\end{proposition}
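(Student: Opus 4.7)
The plan is to establish the equivalence by treating the two directions separately. The direction $3' \Rightarrow 3$ I would dispatch immediately, since every real-valued function in $\mathbb{H}^{n+}_{\sup}$ also lies in $\mathbb{H}^{n+}_{\sup,\mathbb{C}}$ and the Lyapunov inequality \eqref{eq:Lyap_type_iff} has identical form in both settings. The content of the proposition lies in the converse, for which I would extract a real-valued Lyapunov certificate from a given complex-valued one.

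The natural candidate is $S_r(\phi) := \mathrm{Re}(S(\phi))$, the entry-wise real part. Because $S(\phi) \geq 0$ is Hermitian for $\mu$-almost every $\phi$, its real part is symmetric, and evaluating $v^{*} S(\phi) v \geq 0$ on real vectors $v$ shows that $S_r(\phi) \geq 0$ as a real symmetric matrix. Measurability and essential boundedness are inherited from $S$, so $S_r \in \mathbb{H}^{n+}_{\sup}$.

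It remains to verify the Lyapunov inequality for $S_r$. Since $A$, the density $g$, and the measure $\mu$ are all real-valued, taking the real part of the left-hand side of \eqref{eq:Lyap_type_iff} yields exactly
\[
S_r(\phi) - A(\phi)^{\top} \left(\int_{\mathcal{M}} g(\phi,\ell) S_r(\ell)\,\mu(d\ell)\right) A(\phi).
\]
The only subtlety — and what I expect to be the main obstacle — is showing that the lower bound $\epsilon I$ survives this real-part operation. This reduces to the pointwise fact that, if $X$ is Hermitian with $X \geq \epsilon I$, then $\mathrm{Re}(X) \geq \epsilon I$ as a real symmetric matrix. Indeed, for any real $v$ one has $v^{\top} X v = v^{\top} \mathrm{Re}(X) v + i\, v^{\top} \mathrm{Im}(X) v$, and the imaginary part of a Hermitian matrix being skew-symmetric forces the second term to vanish, giving $v^{\top} \mathrm{Re}(X) v = v^{\top} X v \geq \epsilon \|v\|^2$. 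Applying this observation pointwise to \eqref{eq:Lyap_type_iff} would yield the inequality for $S_r$ with the same $\epsilon$, completing $3 \Rightarrow 3'$.
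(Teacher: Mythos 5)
Your proposal is correct and follows essentially the same route as the paper: both take $S_R = \mathrm{Re}(S)$, use the Hermitian structure to conclude that the imaginary part is skew-symmetric and hence vanishes in real quadratic forms, and verify that the Lyapunov inequality with the same $\epsilon$ passes to $S_R$ because $A$, $g$, and $\mu$ are real. No gaps.
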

\begin{proof}
	The statement 3' $\Rightarrow$ 3 is trivial because 
	$\mathbb{H}^{n+}_{\sup} \subset \mathbb{H}^{n+}_{\sup,\mathbb{C}}$. To prove 3 $\Rightarrow$ 3',
	we let $S \in \mathbb{H}^{n+}_{\sup,\mathbb{C}}$ and $\epsilon >0$ satisfy the Lyapunov inequality \eqref{eq:Lyap_type_iff}.
	Let $S_R(\phi) \in \mathbb{R}^{n \times n}$ and $S_I(\phi)\in \mathbb{R}^{n \times n}$ 
	be the real and imaginary part of $S(\phi)$, that is, 
	\[
	S(\phi) = S_R(\phi) + i S_I(\phi).
	\]
	Since $S(\phi)$ is Hermitian, it follows that $S_R(\phi) = S_R(\phi) ^{\top}$ 
	and $S_I(\phi) = -S_I(\phi)^{\top}$.
	Therefore, 
	\[
	0 \leq x^{\top}S(\phi)x = x^{\top}(S_R(\phi) + i S_I(\phi)) x = x^{\top}S_R(\phi) x
	\qquad \forall x \in \mathbb{R}^n.
	\]
	Thus we obtain $S_R \in \mathbb{H}^{n+}_{\sup}$.
	Similarly, since $A(\phi)$ and $g(\phi,\ell)$ are real-valued, it follows that 
	\begin{align*}
	\epsilon \|x\|^2 
	&\leq 
	x^{\top}
	\left(
	S(\phi) - 
	A(\phi)^{\top} 
	\left(
	\int_{\mathcal{M}} g(\phi,\ell) S(\ell)\mu(d\ell)
	\right)
	A(\phi)	
	\right)
	x \\
	&=
	x^{\top}
	\left(
	S_R(\phi) - 
	A(\phi)^{\top} 
	\left(
	\int_{\mathcal{M}} g(\phi,\ell) S_R(\ell)\mu(d\ell)
	\right)
	A(\phi)	
	\right)
	x\qquad \forall x \in \mathbb{R}^n.
	\end{align*}
	Hence $S_R$ also satisfies the Lyapunov inequality \eqref{eq:Lyap_type_iff}
	for $\mu$-almost every $\phi \in \mathcal{M}$.
	This completes the proof.
\end{proof}

We next provide the definition of stochastic stabilizability and
stochastic detectability.
\begin{definition}[Stochastical stabilizability, \cite{Costa2015}]
	\label{def:SS_def}
	Let $A \in {\mathbb H}^{n}_{\sup}$ and
	$B \in {\mathbb H}^{n \times m}_{\sup}$.
	We say that $(A,B)$ is stochastically stabilizable if 
	there exists $F \in {\mathbb H}^{m \times n}_{\sup}$ 
	such that $r_{\sigma}(\mathcal{L}_{A+BF}) < 1$, where
	$\mathcal{L}_{A+BF}$ is defined as in \eqref{eq:L_def}.
	In this case, $F$ is said to stochastically stabilize $(A,B)$.
\end{definition}

\begin{definition}[Stochastic detectability, \cite{Costa2015}]
	\label{def:SD_def}
	Let $A \in {\mathbb H}^{n}_{\sup}$
	and $C \in {\mathbb H}^{r,n}_{\sup}$.
	We say that $(C,A)$ is stochastically detectable if 
	there exists $L \in {\mathbb H}^{n \times r}_{\sup}$ 
	such that \mbox{$r_{\sigma}(\mathcal{L}_{A+LC}) < 1$}, 
	where
	$\mathcal{L}_{A+LC}$ is defined as in \eqref{eq:L_def}.
\end{definition}

Consider the controlled system 
\begin{equation*}
\xi_{k+1} =  A(\phi_k)
\xi_k + B(\phi_k) u_k
\end{equation*}
and the
LQ cost 
\begin{equation*}
\mathcal{J}_d(\widehat \mu, u) = 
\sum_{k=0}^{\infty} E\left(
\|C(\phi_k) \xi_k\|^2 + \|D(\phi_k) u_k\|^2
\right).
\end{equation*}
where
$A \in {\mathbb H}^{n}_{\sup}$, $B \in {\mathbb H}^{m \times n}_{\sup}$,
$C \in {\mathbb H}^{n \times r}_{\sup}$, and $D \in {\mathbb H}^{m \times q}_{\sup}$.
We assume that there exists $\epsilon_D >0$ such that 
$D(\phi)^{\top}D(\phi) > \epsilon_DI$ for $\mu$-almost every $\phi \in \mathcal{M}$.
As in Section 3,
let $\{\mathcal{F}^d_k:k \in \mathbb{Z}_+\}$ denote a filtration, where
$\mathcal{F}^d_k$ represents the $\sigma$-field generated by
$
\{\xi_0,\phi_0,\dots,\xi_k,\phi_k \}
$, and
set $\mathcal{U}_d$ as the class of
control inputs $u = \{u_k: k \in \mathbb{Z}_+\}$ such that $u_k$ is $\mathcal{F}^d_k$
measurable and 
$E(\|\xi_k\|^2) \to 0$ as $k\to \infty$ for
every initial distribution $\widehat \mu$ satisfying A1') and A2').
%Recall that
%an LQ problem for discrete-time Markov jump systems is to find a control law $u^{\rm{opt}} \in \mathcal{U}_d$ achieving
%$\mathcal{J}_d(\widehat \mu,u^{\rm{opt}}) = \inf_{u \in \mathcal{U}_d} 
%\mathcal{J}_d(\widehat \mu,u)$
%for every initial distribution $\widehat \mu$ satisfying A1') and A2').

Define operators 
$\mathcal{E}:{\mathbb H}^{n+}_{\sup} \to {\mathbb H}^{n+}_{\sup}$, 
$\mathcal{V}:{\mathbb H}^{n+}_{\sup} \to {\mathbb H}^{m+}_{\sup}$, and
$\mathcal{R}:{\mathbb H}^{n+}_{\sup} \to {\mathbb H}^{n+}_{\sup}$ as follows:
\begin{align*}
\mathcal{E}(Z)(\bullet) &:=
\int_{\mathcal{M}} Z(\ell)g(\bullet,\ell) \mu(d\ell)\\
\mathcal{V}(Z) &:= 
D^{\top} D+ B^{\top}\mathcal{E}(Z)B \\
\mathcal{R}(Z) &:=
C^{\top} C+
A^{\top}(\mathcal{E}(Z) - \mathcal{E}(Z)B \mathcal{V}(Z)^{-1}B^{\top}
\mathcal{E}(Z)) A.
\end{align*}
Using these operators, 
we can obtain a solution to the LQ problem for discrete-time Markov jump linear systems from
the iterative computation of $\mathcal{R}(Z)$.
\begin{theorem}[\hspace{-3.6pt} \cite{Costa2015}]
	\label{thm:LQ_problem_MJS}
	Consider the Markov jump system \eqref{eq:state_eq_til_u} with the
	LQ cost $\mathcal{J}_d$ in \eqref{eq:Jc_til_u}.
	%	\begin{equation*}
	%	\mathcal{J}_d(x_0,\theta_0,u) := 
	%	\sum_{k=0}^{\infty} E\left(
	%	\|C(\theta(k)) x(k)\|^2 + \|D(\theta(k)) u(k)\|
	%	\right),
	%	\end{equation*}
	%	where $C \in \mathbb{H}^{r,n}_{\sup}$, 
	%	$D \in \mathbb{H}^{p,m}_{\sup}$,
	%	and $D(\phi)^{\top}D(\phi) \geq \epsilon_D I$ $\mu$-almost everywhere
	%	in $\mathcal{M}$ for some $0 < \epsilon_D < 1$.
	If $( A,B)$ is stochastically stabilizable and $(C,A)$ is
	stochastically detectable, then
	there exists a  function $S \in {\mathbb H}^{n+}_{\sup}$ such that
	$S$ is the unique solution in  ${\mathbb H}^{n+}_{\sup}$ of the $\mathcal{M}$-coupled algebraic Riccati equation
	\begin{equation}
	\label{eq:MCARE}
	S(\phi) = \mathcal{R}(S)(\phi)\quad \text{$\mu$-almost every $\phi \in \mathcal{M}$}
	\end{equation}
	and such that
	\[
	K:= -\mathcal{V}(S)^{-1} B^{\top}\mathcal{E}(S) A \in {\mathbb H}^{m \times n}_{\sup}
	\] 
	stochastically stabilizes $(A,B)$.
	The control input $u^{\rm{opt}}\in \mathcal{U}_d$ defined by
	$u_k^{\rm{opt}} := K(\phi_k)\xi_k$
	achieves \[\mathcal{J}_d(\widehat \mu,u^{\rm{opt}}) 
	= \inf_{u \in \mathcal{U}_d} \mathcal{J}_d(\widehat \mu,u)
	= E(\xi_0^{\top}S(\phi_0)\xi_0)\] for every initial distribution $\widehat \mu$
	satisfying A1') and A2').
	Moreover, we can compute the solution $S$ of the Riccati equation \eqref{eq:MCARE} in
	the following way:
	For any $\Xi \in \mathbb{H}^{n+}_{\sup}$, the sequence $\{Y_k^{\eta}\}_{k=0}^{\eta}$ that is
	calculated 
	by solving a (backward recursive) Riccati difference equation 
	$Y_k^{\eta} = \mathcal{R}(Y^{\eta}_{k+1})$ with the initial value $Y^{\eta}_\eta =\Xi$
	satisfies
	$Y^\eta_0(\phi) \to S(\phi)$ as $\eta \to \infty$ for $\mu$-almost every $\phi \in \mathcal{M}$.
\end{theorem}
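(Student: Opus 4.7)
The plan is to attack this via dynamic programming, first solving a finite-horizon version of the LQ problem and then passing to the limit as the horizon tends to infinity. For a finite horizon $\eta$, I would compute the optimal cost-to-go starting at time $k$ and set up the Bellman recursion. A standard completion-of-squares manipulation on the one-step cost $\|C(\phi_k)\xi_k\|^2 + \|D(\phi_k)u_k\|^2$ plus the expected continuation $E(\xi_{k+1}^{\top} Y_{k+1}^{\eta}(\phi_{k+1})\xi_{k+1} \mid \mathcal{F}^d_k)$ shows that the value function is quadratic of the form $\xi^{\top} Y_k^{\eta}(\phi)\xi$, that $Y_k^{\eta}$ satisfies the backward recursion $Y_k^{\eta} = \mathcal{R}(Y_{k+1}^{\eta})$, and that the one-step minimizer is the linear feedback $u_k = -\mathcal{V}(Y_{k+1}^{\eta})^{-1} B(\phi_k)^{\top}\mathcal{E}(Y_{k+1}^{\eta})(\phi_k) A(\phi_k)\xi_k$. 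Here I need the hypothesis $D^{\top}D > \epsilon_D I$ to guarantee invertibility of $\mathcal{V}(Y_{k+1}^{\eta})$.

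Next I would pass to the infinite-horizon problem in two steps. With the initial condition $\Xi = 0$ the sequence $\{Y_0^{\eta}\}_{\eta}$ is monotone non-decreasing in $\eta$, since a longer horizon only adds non-negative stage costs. To obtain an upper bound, I use stochastic stabilizability: let $F$ stochastically stabilize $(A,B)$ and evaluate the suboptimal policy $u_k = F(\phi_k)\xi_k$. The closed-loop matrix $A+BF$ yields $r_\sigma(\mathcal{L}_{A+BF}) < 1$, hence by Theorem \ref{thm:iff_conds} the series $\sum_{k} E(\|\xi_k\|^2)$ is finite for every admissible initial distribution. Since $C$ and $D+DF$ are in $\mathbb{H}^{\bullet}_{\sup}$, this gives a uniform bound $Y_0^{\eta} \leq S_{\max}$ in $\mathbb{H}^{n+}_{\sup}$. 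Monotone convergence then produces a pointwise (in $\phi$) limit $S \in \mathbb{H}^{n+}_{\sup}$ satisfying the fixed-point relation $S = \mathcal{R}(S)$, which is precisely the coupled algebraic Riccati equation \eqref{eq:MCARE}.

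With $S$ in hand I would define $K:= -\mathcal{V}(S)^{-1}B^{\top}\mathcal{E}(S) A$ and verify the two remaining claims: that $K$ stochastically stabilizes $(A,B)$, and that $u^{\rm opt}_k = K(\phi_k)\xi_k$ attains the infimum. Substituting $K$ into the ARE and rearranging yields the closed-loop identity
\begin{equation*}
S(\phi) - (A(\phi)+B(\phi)K(\phi))^{\top}\mathcal{E}(S)(\phi)(A(\phi)+B(\phi)K(\phi)) = C(\phi)^{\top}C(\phi) + K(\phi)^{\top}D(\phi)^{\top}D(\phi)K(\phi),
\end{equation*}
$\mu$-a.e. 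Stochastic detectability of $(C,A)$, used together with $D^{\top}D > \epsilon_D I$, upgrades this Lyapunov-type equality to the strict inequality required by part 3 of Theorem \ref{thm:iff_conds} (any trajectory in the kernel of the right-hand side would have to decay, by detectability); hence $r_\sigma(\mathcal{L}_{A+BK}) < 1$. A telescoping of $E(\xi_{k+1}^{\top}S(\phi_{k+1})\xi_{k+1}) - E(\xi_k^{\top}S(\phi_k)\xi_k)$ along any admissible trajectory in $\mathcal{U}_d$ then shows $\mathcal{J}_d(\widehat \mu,u) \geq E(\xi_0^{\top}S(\phi_0)\xi_0)$ with equality on the feedback $K$, proving optimality. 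Uniqueness of $S$ in $\mathbb{H}^{n+}_{\sup}$ follows by applying the same telescoping to a putative second solution and invoking detectability. Finally, the convergence $Y_0^{\eta}(\phi) \to S(\phi)$ for arbitrary initial $\Xi \in \mathbb{H}^{n+}_{\sup}$ is obtained by sandwiching between the runs with $\Xi = 0$ (monotone up to $S$) and with $\Xi \geq S$ large (monotone down to $S$), using that the Riccati iteration becomes a contraction around $S$ once the closed-loop system is stochastically stable.

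The principal obstacle is the functional-analytic setting: because $\mathcal{M}$ is a general Borel space carrying a $\sigma$-finite measure rather than a finite mode set, every claim must be expressed $\mu$-almost everywhere and preserved under the operators $\mathcal{E},\mathcal{V},\mathcal{R}$ within $\mathbb{H}^{n+}_{\sup}$. In particular one must check that the pointwise monotone limit $S$ is again essentially bounded (which requires the uniform upper bound obtained from stabilizability), that $\mathcal{V}(S)^{-1}$ is essentially bounded so that $K \in \mathbb{H}^{m\times n}_{\sup}$, and that measurability is preserved through the limits. These are the steps where the proof of Costa--Figueiredo (\cite{Costa2015}) does most of its technical work; modulo these measure-theoretic verifications the argument is a direct Markov-jump generalization of the classical deterministic LQR proof.
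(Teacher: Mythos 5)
This theorem is not proved in the paper at all: it is imported verbatim from Costa--Figueiredo \cite{Costa2015}, so there is no in-paper argument to compare against. Your outline is the standard route and, as far as it goes, matches the strategy of the cited source: finite-horizon dynamic programming giving the backward recursion $Y_k^{\eta}=\mathcal{R}(Y_{k+1}^{\eta})$, monotonicity from $\Xi=0$ bounded above via a stochastically stabilizing feedback, passage to the limit to obtain the coupled ARE, and detectability for closed-loop stability, optimality via telescoping, and uniqueness. Two small cautions. First, the pair appearing in the suboptimal-cost bound should be $(C,DF)$, not ``$C$ and $D+DF$.'' Second, the step you describe as ``detectability upgrades the Lyapunov-type equality to the strict inequality required by part 3 of Theorem \ref{thm:iff_conds}'' is the one place where the sketch hides real work: the closed-loop identity only delivers the right-hand side $C^{\top}C+K^{\top}D^{\top}DK\geq 0$, which does not satisfy the $\geq\epsilon I$ form of \eqref{eq:Lyap_type_iff}, and one cannot simply perturb it into strictness. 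The source instead argues through the detectability output injection (comparing the closed loop to $A+LC$ plus a square-summable forcing) to conclude $\sum_k E(\|\xi_k\|^2)<\infty$ directly; your parenthetical kernel argument gestures at this but is not a proof. Modulo that step and the measure-theoretic bookkeeping you already flag, the plan is sound and is essentially the proof in \cite{Costa2015}.
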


Let us go back to the reduced LQ problem \ref{prob:LQ_dis} 
for the Markov jump system \eqref{eq:state_eq_til_u}
and the LQ cost \eqref{eq:Jc_til_u}, where the matrices $\bar A$, $C$, and $D$ are defined by
\eqref{eq:tilde_A_def} and \eqref{eq:Cholesky_decomposition}.
The Cholesky decompositions for all $\phi \in \mathcal{M}$ 
in \eqref{eq:Cholesky_decomposition} requires 
heavy computational cost, but
the weighting functions $C$ and $D$ appear only in $\mathcal{R}$
and $\mathcal{V}$ in the forms $C^{\top}C$ and
$D^{\top}D$.
Hence, to compute an optimal control input $u^{\text{opt}}$,
we do not need the 
Cholesky decompositions in \eqref{eq:Cholesky_decomposition}.
Although we still need $C$ to check the stochastic detectability of $(C,\bar A)$,
we see from Proposition \ref{prop:sssd} below that
it is enough to test the stochastic detectability of $(C^{\top}C,\bar A)$
if $C^{\top}C$ is positive definite.
\begin{proposition}
	\label{prop:sssd}
	Define $\bar A$ and $C$ as in \eqref{eq:tilde_A_def}
	and \eqref{eq:Cholesky_decomposition}, respectively.
	The pair $(A,B)$ is stochastically stabilizable if and only if 
	$(\bar A,B)$ is stochastically stabilizable.
	Moreover, under the positive definiteness of
	the weighting matrix in \eqref{eq:weighting_matrix},
	$(C,\bar A)$ is stochastically detectable if and only if $(C^{\top}C,\bar A)$ is stochastically detectable.
\end{proposition}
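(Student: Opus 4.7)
The plan is to treat each of the two equivalences separately, in each case constructing an explicit bijective correspondence between stabilizing (respectively detecting) gains for the two pairs.

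For the first equivalence, I would use the algebraic identity coming directly from \eqref{eq:tilde_A_def}:
\[
\bar A(\phi) + B(\phi) F(\phi) = A(\phi) + B(\phi)\bigl(F(\phi) - R(\phi)^{-1} W(\phi)^{\top}\bigr),
\qquad \phi \in \mathcal{M}.
\]
Because $R(\phi) \geq (h-\tau_{\max})R_c > 0$ uniformly on $\mathcal{M}$, we have $R^{-1}W^{\top} \in \mathbb{H}^{m\times n}_{\sup}$, so the map $F \mapsto F + R^{-1}W^{\top}$ is a bijection of $\mathbb{H}^{m\times n}_{\sup}$ onto itself. Therefore $F$ stochastically stabilizes $(A,B)$ if and only if $\bar F := F + R^{-1}W^{\top}$ stochastically stabilizes $(\bar A, B)$, since $\mathcal{L}_{A+BF}$ and $\mathcal{L}_{\bar A + B\bar F}$ coincide for such a pair.

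For the second equivalence, I would first establish that, under \eqref{eq:weighting_matrix}, $C(\phi)$ is pointwise invertible with $C^{-1} \in \mathbb{H}^{n\times n}_{\sup}$. The Schur complement applied to \eqref{eq:weighting_matrix} gives $C(\phi)^{\top}C(\phi) = Q(\phi) - W(\phi) R(\phi)^{-1} W(\phi)^{\top} > 0$ for every $\phi \in \mathcal{M}$; since $\mathcal{M}$ is compact and $\phi \mapsto C(\phi)^{\top}C(\phi)$ is continuous by Remark \ref{rem:uniqueness_CD}, its minimum eigenvalue attains a positive infimum, so $\|C(\phi)^{-1}\|$ is uniformly bounded. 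With this in hand the correspondence is immediate: from any $L$ with $r_{\sigma}(\mathcal{L}_{\bar A + LC}) < 1$, define $L_0 := L\, C^{-\top} \in \mathbb{H}^{n \times n}_{\sup}$, giving $\bar A + L_0(C^{\top}C) = \bar A + LC$; conversely, from any $L_0$ with $r_{\sigma}(\mathcal{L}_{\bar A + L_0 C^{\top}C}) < 1$, set $L := L_0 C^{\top} \in \mathbb{H}^{n \times n}_{\sup}$, giving the same identity.

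The main obstacle, and the only place where the strict positive definiteness hypothesis is actually needed, is the uniform invertibility of $C$ required for the $(\Rightarrow)$ half of the detectability equivalence: without it, the inversion $L \mapsto L\, C^{-\top}$ is not well-defined within the space of essentially bounded gains. The compactness of $\mathcal{M}$ together with the continuity of $C(\cdot)^{\top}C(\cdot)$ is precisely what promotes pointwise positive definiteness to a uniform lower eigenvalue bound and thus resolves this difficulty.
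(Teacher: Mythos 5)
Your proposal is correct and follows essentially the same route as the paper's own proof: the stabilizability equivalence via the affine change of gain $F \mapsto F + R^{-1}W^{\top}$ induced by the definition of $\bar A$, and the detectability equivalence via the gain correspondence $L \mapsto L C^{-\top}$ (resp.\ $L_0 \mapsto L_0 C^{\top}$), with the strict positive definiteness of \eqref{eq:weighting_matrix} used only to guarantee $C^{-1} \in \mathbb{H}^{n \times n}_{\sup}$. Your explicit justification of the uniform invertibility of $C$ through compactness of $\mathcal{M}$ and continuity of $C^{\top}C$ is a slightly more detailed version of what the paper delegates to Remark~\ref{rem:uniqueness_CD}.
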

\begin{proof}
	By the definition of $\bar A$, 
	$K \in {\mathbb H}^{m \times n}_{\sup}$ stochastically stabilizes
	$(A,B)$ if and only if $K -R^{-1}W \in {\mathbb H}^{m \times n}_{\sup}$ 
	stochastically stabilizes $(\bar A,B)$. 
	
	From the discussion in Remark \ref{rem:uniqueness_CD}, $C^{-1} \in {\mathbb H}^{n \times n}_{\sup}$
	if \eqref{eq:weighting_matrix} holds.
	Hence 
	if $(C,\bar A)$ is stochastically detectable with an
	observer gain $L \in {\mathbb H}^{n \times n}_{\sup}$,
	then $(C^{\top}C,\bar A)$  is also stochastically detectable with an
	observer gain $L(C^{\top})^{-1}\in {\mathbb H}^{n \times n}_{\sup}$, and vice versa.
\end{proof}

\section{Sufficient conditions for Stochastic Stabilizability and 
	Detectability}
From the results in Sections~3 and 4, we can obtain an optimal controller
under the assumption that 
$(A, B)$ defined by \eqref{eq:A_B_def} is
stochastically stabilizable and
$(C,\bar A)$
%or
%$(Q-WR^{-1}W^{\top},\bar A)$
defined by
\eqref{eq:tilde_A_def}
and \eqref{eq:Cholesky_decomposition}
is stochastically
detectable.
This assumption does not hold in general (and hence the solution of
the Riccati difference equation may diverge)
even if $(A_c, B_c)$
is controllable and $(Q_c, A_c)$ is observable. 
The major difficulty in this controller design
is to check the stochastic stabilizability and detectability, namely, 
to show the existence of 
$F \in {\mathbb H}^{m \times n}_{\sup}$ and 
$L \in {\mathbb H}^{n \times r}_{\sup}$  such that 
the spectral radii of the operators
$\mathcal{L}_{A+BF}$ and  
$\mathcal{L}_{\bar A+LC}$ are less than one.
In this section, we provide sufficient conditions
for these properties
in terms of LMIs.
To this end,
we use the following technical result:
\begin{lemma}[\hspace{-3.5pt} \cite{Geromel2007}]
	\label{lem:geromel_LMI}
	For every
	square matrix $U$ and every positive definite matrix 
	$S$,
	\[
	US^{-1}U^{\top} \geq U + U^{\top} - S.
	\]
\end{lemma}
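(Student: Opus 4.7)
The plan is to prove this by a standard completing-the-square argument based on the positivity of a congruence with $S^{-1}$.

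First I would observe that since $S$ is positive definite, so is its inverse $S^{-1}$, and therefore for every matrix $M$ of compatible dimension the congruence $M S^{-1} M^{\top}$ is positive semidefinite. The heart of the proof is then to make the clever choice $M := U - S$, which turns the trivial inequality $M S^{-1} M^{\top} \geq 0$ into something nontrivial once expanded.

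Next I would expand
\[
(U - S) S^{-1} (U - S)^{\top} \geq 0
\]
by distributing, using $S S^{-1} = S^{-1} S = I$. The four resulting terms simplify to
\[
U S^{-1} U^{\top} \;-\; U \;-\; U^{\top} \;+\; S \;\geq\; 0,
\]
and rearranging this yields exactly $U S^{-1} U^{\top} \geq U + U^{\top} - S$, which is the claim.

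There is no real obstacle here; the entire content of the lemma is in the choice $M = U - S$, after which everything reduces to a one-line algebraic manipulation. The only items worth noting are (i) the hypothesis $S > 0$ is used only to guarantee that $S^{-1}$ exists and is positive definite, so that the initial congruence is a valid semidefinite quantity, and (ii) the matrix $U$ need not be square for the argument to go through as stated — but since the lemma is invoked later only in the square case, I would keep the statement as given rather than generalize.
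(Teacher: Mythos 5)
Your proof is correct: the paper states this lemma without proof, simply citing \cite{Geromel2007}, and your completing-the-square argument starting from $(U-S)S^{-1}(U-S)^{\top} \geq 0$ and expanding via $SS^{-1}=I$ and $S^{\top}=S$ is exactly the standard derivation. One quibble with your closing aside: $U$ does need to be square, since both $U+U^{\top}$ and the difference $U-S$ are only defined when $U$ has the same (square) shape as $S$; this does not affect the validity of your proof of the lemma as stated.
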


We here assume that $\{ \phi_k:k \in \mathbb{Z}_+\}$ is a time-homogeneous
Markov chain taking values in 
the box $\mathcal{M} = [\tau_{\min}, \tau_{\max}]^p$.
%we can easily extend the results in this section
%to more general Borel spaces.
As in Assumption \ref{assump:delay},
let $g(\bullet,\bullet)$ be the density function 
with respect to a $\sigma$-finite measure $\mu$
on $\mathcal{M}$ 
for the transition of the Markov chain 
$\{\phi_k:k \in \mathbb{Z}_+ \}$.
%All matrices in this section are complex-valued, but
%similar results hold for real-valued matrices.

%On a probability space $(\Omega, \mathcal{F},\mathcal{P})$,
%we consider a time-homogeneous Markov chain $\{ \theta(k):k \in \mathbb{N}\}$
%taking values in a Borel space $\mathcal{M}$ and
%with transition probability kernel $\mathcal{G}(\bullet | \bullet)$ having
%a density $g(\bullet | \bullet)$ with respect to a $\sigma$-finite measure $\mu$
%on $\mathcal{M}$
\subsection{Stochastic Stabilizability}
We first study the stochastic stabilizability of the 
pair $A \in {\mathbb H}^{n}_{\sup}$ and
$B \in {\mathbb H}^{n\times m}_{\sup}$.
In our LQ problem, we need to check the stochastic stabilizability of the pair
$(A,B)$ defined by \eqref{eq:A_B_def}.

Divide $\mathcal{M} =  [\tau_{\min}, \tau_{\max}]^p$ into $N$ disjoint boxes $\{\mathcal{B}_i\}_{i=1}^{N}$
(whose union is $\mathcal{M}$),
e.g., by splitting each interval $[\tau_{\min}, \tau_{\max}]$ into
$r$ intervals $[s_i,s_{i+1})$ ($i=1,\dots,r-1$) and $[s_r,s_{r+1}]$ such that
\begin{equation*}
%\label{eq:interval_def}
\tau_{\min} =: s_1 < s_2 < \cdots < s_{r+1} := \tau_{\max}.
\end{equation*}
For each $i=1,\dots,N$,
let $c_i \in \mathcal{B}_i$, e.g., the center of $\mathcal{B}_i$.
Consider a
piecewise-constant feedback gain $F \in {\mathbb H}^{m \times n}_{\sup}$ 
defined by
\begin{equation}
\label{eq:controller}
F(\phi) := F_i \in \mathbb{R}^{m\times n} \qquad \forall \phi \in \mathcal{B}_i.
\end{equation}

We provide a sufficient condition 
for the feedback gain $F$ in \eqref{eq:controller}
to stochastically stabilize $(A,B)$, inspired by
the gridding approach developed, e.g., in  
\cite{Fujioka2009,Oishi2010, Hetel2011, Donkers2011,Donkers2012_stochastic}.
\begin{theorem}
	\label{thm:design_of_gain}
	Let $A \in {\mathbb H}^{n}_{\sup}$ and
	$B \in {\mathbb H}^{n \times m}_{\sup}$.
	For each $i=1,\dots,N$, 
	define
	\begin{align}
	\label{eq:beta_def}
	w_i(\phi) 
	&:= \int_{\mathcal{B}_i} g(\phi,\ell) \mu(d\ell)  \geq 0\qquad  \forall \phi \in \mathcal{M},
	\end{align}
	and
	\begin{equation*}
	\begin{bmatrix}
	\Gamma_{A,i} & \Gamma_{B,i}
	\end{bmatrix} := 
	\begin{bmatrix}
	\sqrt{w_1(c_i)} A(c_i) &\sqrt{w_1(c_i)} B(c_i) \\
	\vdots & \vdots \\
	\sqrt{w_N(c_i)} A(c_i) &
	\sqrt{w_N(c_i)} B(c_i)
	\end{bmatrix}.
	\end{equation*}
	Assume that,
	for every $i=1,\dots,N$, 
	a scalar $\kappa_i > 0$ satisfies
	\begin{align}
	\label{eq:kappa_cond}
	\left\|
	\begin{bmatrix}
	\sqrt{w_1(\phi)} A(\phi) &\sqrt{w_1(\phi)} B(\phi) \\
	\vdots & \vdots \\
	\sqrt{w_N(\phi)} A(\phi) &
	\sqrt{w_N(\phi)} B(\phi)
	\end{bmatrix}
	-
	\begin{bmatrix}
	\Gamma_{A,i} &  \Gamma_{B,i}
	\end{bmatrix}
	%  \begin{bmatrix}
	%  \sqrt{w_1(\bar t_i)} A(\bar t_i)^{\top} & \cdots & \sqrt{w_N(\tau)} A(\bar t_i)^{\top} \\ 
	%  \sqrt{w_1(\bar t_i)} B(\bar t_i)^{\top} & \cdots & \sqrt{w_N(\bar t_i)} B(\bar t_i)^{\top}
	%  \end{bmatrix}
	\right\| 
	\leq \kappa_i
	\end{align}
	for $\mu$-almost every $\phi \in \mathcal{B}_i$.
	If
	there exist positive definite matrices $R_i \in \mathbb{R}^{n \times n}$, 
	(not necessarily symmetric) matrices $U_i \in \mathbb{R}^{n \times n}, \bar F_i\in \mathbb{R}^{m \times n}$, and
	scalars $\lambda_i > 0$ such that
	the following LMIs hold for all $i=1\dots,N$:
	\begin{equation}
	\label{eq:LMI_feedback_design}
	\begin{bmatrix}
	U_i   +   U_i^{\top}   -   R_i   & 0    
	& U_i^{\top}\Gamma_{A,i}^{\top}   +   \bar F_i^{\top} \Gamma_{B,i}^{\top}   & 
	\kappa_i 
	\begin{bmatrix}
	U_i^{\top}      & \bar F_i^{\top}
	\end{bmatrix}\\
	\star   & \lambda_i I   & \lambda_i I   & 0 \\
	\star   & \star   & \bm{R}   & 0 \\
	\star   & \star   & \star   & \lambda_i I
	\end{bmatrix}   >   0,
	\end{equation}
	where $\bm{R} := \diag(R_1,\dots,R_{N})$,
	%	\begin{align}
	%	\label{eq:G_def}
	%	\bm{R} :=
	%	\begin{bmatrix}
	%	R_1 &   & 0 \\
	%	& \ddots &  \\
	%	0 &  & R_N
	%	\end{bmatrix},
	%	\end{align}
	then
	the pair $(A,B)$ is stochastically stabilizable by
	the controller \eqref{eq:controller} with $F_i := \bar F_i U_i^{-1}$.
\end{theorem}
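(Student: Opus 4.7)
The plan is to apply Theorem~\ref{thm:iff_conds} to the closed-loop matrix $A+BF$, combined with Proposition~\ref{prop:real_case} to stay in the real-valued setting. Thus it suffices to construct a piecewise-constant $S \in \mathbb{H}^{n+}_{\sup}$ and a scalar $\epsilon>0$ such that the Lyapunov inequality
\[
	S(\phi) - \bigl(A(\phi) + B(\phi)F(\phi)\bigr)^{\top} \!\int_{\mathcal{M}} g(\phi,\ell) S(\ell)\mu(d\ell)\,\bigl(A(\phi) + B(\phi)F(\phi)\bigr) \geq \epsilon I
\]
holds for $\mu$-almost every $\phi \in \mathcal{M}$. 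The natural ansatz is $S(\phi) := P_i := R_i^{-1}$ on each box $\mathcal{B}_i$: $R_i$ is invertible by hypothesis, and the $(1,1)$ block of \eqref{eq:LMI_feedback_design} forces $U_i + U_i^{\top} > R_i > 0$, so that $U_i$ is invertible and $F_i := \bar F_i U_i^{-1}$ is well defined. Because $S$ is piecewise constant, the integral collapses into the finite sum $\sum_{j=1}^{N} w_j(\phi) P_j$, and the target inequality reduces to a pointwise quadratic condition in the closed-loop matrix $A(\phi) + B(\phi) F_i$ on each box.

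The core of the argument is to turn \eqref{eq:LMI_feedback_design} into this pointwise condition through a sequence of standard LMI manipulations. First I would apply two consecutive Schur complements to \eqref{eq:LMI_feedback_design}---with respect to the $(4,4)$ and then the $(2,2)$ diagonal blocks, both equal to $\lambda_i I$---to reduce it to the equivalent $2\times 2$ block inequality
\[
	\begin{bmatrix}
		U_i + U_i^{\top} - R_i - \tfrac{\kappa_i^2}{\lambda_i}(U_i^{\top} U_i + \bar F_i^{\top} \bar F_i) &
		U_i^{\top}\Gamma_{A,i}^{\top} + \bar F_i^{\top}\Gamma_{B,i}^{\top} \\
		\star & \bm{R} - \lambda_i I
	\end{bmatrix} > 0.
\]
Lemma~\ref{lem:geromel_LMI} (applied with $U \leftarrow U_i^{\top}$ and $S \leftarrow R_i$) upgrades the $(1,1)$ block to the larger quantity $U_i^{\top} R_i^{-1} U_i - \tfrac{\kappa_i^2}{\lambda_i}(U_i^{\top} U_i + \bar F_i^{\top}\bar F_i)$, which preserves positive definiteness. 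A congruence by $\diag(U_i^{-1}, I)$ followed by the substitution $F_i = \bar F_i U_i^{-1}$ then yields
\[
	\begin{bmatrix}
		P_i - \tfrac{\kappa_i^2}{\lambda_i}(I + F_i^{\top} F_i) &
		V_i^{\top}\bm{\Gamma}_i^{\top} \\
		\star & \bm{R} - \lambda_i I
	\end{bmatrix} > 0,
	\qquad V_i := \begin{bmatrix} I \\ F_i \end{bmatrix}, \quad \bm{\Gamma}_i := [\Gamma_{A,i},\,\Gamma_{B,i}].
\]

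This last display is precisely the Petersen-multiplier form of a robust LMI with uncertainty $\Delta_i(\phi) := \bm{\Gamma}(\phi) - \bm{\Gamma}_i$, where $\bm{\Gamma}(\phi)$ denotes the stacked matrix appearing on the left of \eqref{eq:kappa_cond}; by \eqref{eq:kappa_cond}, $\|\Delta_i(\phi)\| \leq \kappa_i$ for $\phi \in \mathcal{B}_i$. Petersen's inequality with multiplier $\lambda_i$ then upgrades the nominal LMI to the pointwise robust LMI
\[
	\begin{bmatrix} P_i & V_i^{\top}\bm{\Gamma}(\phi)^{\top} \\ \star & \bm{R} \end{bmatrix} \geq \delta_i I \qquad \forall \phi \in \mathcal{B}_i,
\]
with $\delta_i > 0$ uniform over $\mathcal{B}_i$ by the strictness of \eqref{eq:LMI_feedback_design}. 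A final Schur complement, using $\bm{R} = \diag(P_1^{-1}, \ldots, P_N^{-1})$ and the identification of $\bm{\Gamma}(\phi) V_i$ as the block-stacked closed-loop matrix with $j$-th block $\sqrt{w_j(\phi)}(A(\phi) + B(\phi)F_i)$, delivers $P_i \geq (A(\phi) + B(\phi)F_i)^{\top} \bigl(\sum_j w_j(\phi) P_j\bigr)(A(\phi) + B(\phi)F_i) + \eta_i I$ on $\mathcal{B}_i$ for some $\eta_i > 0$. Taking $\epsilon := \min_i \eta_i > 0$ over the finitely many boxes closes the argument via Theorem~\ref{thm:iff_conds}, yielding $r_\sigma(\mathcal{L}_{A+BF}) < 1$. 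I expect the main difficulty to be purely bookkeeping: tracking the two Schur reductions, the role of $U_i$ as a parameter-dependent surrogate for $P_i^{-1}$, and the double use of $\lambda_i$ as both the Petersen multiplier and the shift in the $(2,2)$ block $\bm{R} - \lambda_i I$.
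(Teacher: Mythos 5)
Your proposal is correct and follows essentially the same route as the paper: the reduction via Theorem~\ref{thm:iff_conds} and Proposition~\ref{prop:real_case}, the piecewise-constant ansatz $S(\phi)=R_i^{-1}$ on $\mathcal{B}_i$, Lemma~\ref{lem:geromel_LMI} with the congruence in $U_i$, and the treatment of \eqref{eq:kappa_cond} as a norm-bounded uncertainty with $\lambda_i=1/\rho_i$ as the multiplier are all exactly the paper's ingredients. The only differences are presentational: you run the chain backward (LMI $\Rightarrow$ Lyapunov inequality, via two explicit Schur complements) and eliminate the uncertainty with the Young/Petersen completion of squares, where the paper instead writes the perturbed matrix as $V_i^{\top}\Omega_i V_i$ plus a positive semidefinite remainder and invokes full column rank of $V_i$ --- two equivalent packagings of the same step.
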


\begin{proof}
	From Theorem \ref{thm:iff_conds} and Proposition \ref{prop:real_case},
	$(A,B)$ is stochastically stabilizable if and only if
	there exist 
	$S \in \mathbb{H}^{n+}_{\sup}$, $F \in \mathbb{H}^{m\times n}_{\sup}$, and $\epsilon >0$
	such that the following Lyapunov inequality 
	holds for $\mu$-almost every $\phi \in \mathcal{M}$:
	\begin{equation}
	\label{eq:Lyap_type_SS}
	S(\phi) - 
	(A(\phi)+B(\phi)F(\phi))^{\top} 
	\left(
	\int_{\mathcal{M}} g(\phi,\ell) S(\ell)\mu(d\ell)
	\right)
	(A(\phi)+B(\phi)F(\phi)) \geq \epsilon I.
	\end{equation}
	We employ a piecewise-constant matrix function $S$ for
	the Lyapunov inequality \eqref{eq:Lyap_type_SS}.
	Define 
	\begin{equation}
	\label{eq:S_approximation}
	S(\phi) := S_i\qquad \forall \phi \in \mathcal{B}_i
	\end{equation}
	with $S_i > 0$. 
	In what follow, we prove that 
	if the LMIs in \eqref{eq:LMI_feedback_design} are feasible for all $i=1,\dots,N$,
	then the Lyapunov inequality \eqref{eq:Lyap_type_SS} holds with
	$S\in {\mathbb H}^{n+}_{\sup}$ defined by \eqref{eq:S_approximation}.
	
	By construction, we have that 
	\begin{equation}
	\label{eq:integral_to_summation}
	\int_{\mathcal{M}} g(\phi,\ell) S(\ell)\mu(d\ell)
	=
	\sum_{i=1}^{N}
	w_i(\phi) S_i \qquad \forall \phi \in \mathcal{M}.
	\end{equation}
	Note that
	\begin{equation}
	\label{eq:summation_to_quadratic}
	\sum_{i=1}^{N}
	w_i(\phi) S_i = 
	\begin{bmatrix}
	\sqrt{w_1(\phi)} I \\ \vdots \\ \sqrt{w_{N}(\phi)} I
	\end{bmatrix}^{\top} \bm{S}
	\begin{bmatrix}
	\sqrt{w_1(\phi)} I \\ \vdots \\  \sqrt{w_{N}(\phi)} I
	\end{bmatrix},
	\end{equation}
	where $\bm{S} := \diag(S_1,\dots,S_{N})$.
	Substituting \eqref{eq:integral_to_summation} and \eqref{eq:summation_to_quadratic} 
	into \eqref{eq:Lyap_type_SS}, we see that 
	\eqref{eq:Lyap_type_SS} can be transformed into
	the matrix inequality
	\begin{equation}
	\label{eq:SS_matrix_inequality_before_SC}
	S(\phi) -
	(A(\phi) + B(\phi)F(\phi))^{\top}
	\begin{bmatrix}
	\sqrt{w_1(\phi)} I \\ \vdots \\ \sqrt{w_N(\phi)} I
	\end{bmatrix}^{\top} \bm{S}
	\begin{bmatrix}
	\sqrt{w_1(\phi)} I \\ \vdots \\  \sqrt{w_N(\phi)} I
	\end{bmatrix} 
	(A(\phi) + B(\phi)F(\phi)) 
	\geq \epsilon I.
	\end{equation}
	Moreover,
	by the Schur complement formula,
	the matrix inequality \eqref{eq:SS_matrix_inequality_before_SC} is
	equivalent to
	\begin{equation}
	\label{eq:beta_LMI}
	\begin{bmatrix}
	S(\phi) - \epsilon I & (A(\phi) + B(\phi)F(\phi))^{\top}
	\begin{bmatrix}
	\sqrt{w_1(\phi)} I \\ \vdots \\ \sqrt{w_N(\phi)} I
	\end{bmatrix}^{\top}
	\bm{S} \\
	\star & \bm{S}
	\end{bmatrix} \geq 0.
	\end{equation}
	
	Using the inequality \eqref{eq:kappa_cond}, 
	we next discretize \eqref{eq:beta_LMI}, more specifically,
	show that if the LMIs \eqref{eq:LMI_feedback_design} are feasible
	for every $i=1,\dots,N$, then the matrix inequality 
	\eqref{eq:beta_LMI} holds for $\mu$-almost every $\phi \in \mathcal{M}$.
	In terms of the upper-right part of the matrix in \eqref{eq:beta_LMI}, we obtain
	\begin{align*}
	&(A(\phi) + B(\phi)F(\phi))^{\top}\begin{bmatrix}
	\sqrt{w_1(\phi)} I \\ \vdots \\ \sqrt{w_N(\phi)} I
	\end{bmatrix}^{\top} \\
	&\qquad = 
	\begin{bmatrix}
	I & F(\phi)^{\top}
	\end{bmatrix}
	\begin{bmatrix}
	\sqrt{w_1(\phi)} A(\phi)^{\top} & \cdots & \sqrt{w_N(\phi)} A(\phi)^{\top} \\[2pt]
	\sqrt{w_1(\phi)} B(\phi)^{\top} & \cdots & \sqrt{w_N(\phi)} B(\phi)^{\top}
	\end{bmatrix}.
	\end{align*}
	We also have from the inequality \eqref{eq:kappa_cond} that,
	for $\mu$-almost every $\phi \in \mathcal{B}_i$, there exists
	%	$\begin{bmatrix}
	%	\Phi_A & \Phi_B
	%	\end{bmatrix} \in \mathbb{R}^{nN \times( n+m)}$ such that
	\[
	\begin{bmatrix}
	\Phi_A & \Phi_B
	\end{bmatrix} \in \mathbb{R}^{nN \times( n+m)}
	\text{~with~}
	\left\|
	\begin{bmatrix}
	\Phi_A & \Phi_B
	\end{bmatrix} 
	\right\| < 1\] 
	such that
	\begin{align*}
	\begin{bmatrix}
	\sqrt{w_1(\phi)} A(\phi)^{\top} & 
	\cdots 
	& \sqrt{w_N(\phi)} A(\phi)^{\top} \\[2pt] 
	\sqrt{w_1(\phi)} B(\phi)^{\top} &
	\cdots 
	& \sqrt{w_N(\phi)} B(\phi)^{\top}
	\end{bmatrix}
	=
	\begin{bmatrix}
	\Gamma_{A,i}^{\top} \\ \Gamma_{B,i}^{\top}
	\end{bmatrix} 
	%  \begin{bmatrix}
	%  \sqrt{w_1(\bar t_i)} A(\bar t_i)^{\top} & \cdots & \sqrt{w_N(\phi)} A(\bar t_i)^{\top} \\ 
	%  \sqrt{w_1(\bar t_i)} B(\bar t_i)^{\top} & \cdots & \sqrt{w_N(\bar t_i)} B(\bar t_i)^{\top}
	%  \end{bmatrix
	+ \kappa_i 
	\begin{bmatrix}
	\Phi_A^{\top} \\ \Phi_B^{\top}
	\end{bmatrix}.
	\end{align*}
	%	$\Phi \in \bm \Phi$, 
	Hence
	the matrix inequality \eqref{eq:beta_LMI} holds with some $\epsilon > 0$ 
	for $\mu$-almost every $\phi \in \mathcal{M}$ if
	\begin{equation}
	\label{eq:matrix_inequality_with_product_term}
	\begin{bmatrix}
	S_i & 
	\begin{bmatrix}
	I & F_i^{\top}
	\end{bmatrix}
	\left(
	\begin{bmatrix}
	\Gamma_{A,i}^{\top} \\ \Gamma_{B,i}^{\top}
	\end{bmatrix}
	+ \kappa_i 
	\begin{bmatrix}
	\Phi_A^{\top} \\ \Phi_B^{\top}
	\end{bmatrix}
	\right) \bm{S} \\
	\star & \bm{S}
	\end{bmatrix} > 0
	\end{equation} 
	for every $i=1\dots,N$ and for every
	$\begin{bmatrix}
	\Phi_A & \Phi_B
	\end{bmatrix} \in \mathbb{R}^{nN \times (n+m)}$ 
	with
	$\left\|
	\begin{bmatrix}
	\Phi_A & \Phi_B
	\end{bmatrix} 
	\right\| < 1$.
	
	The resulting matrix inequality \eqref{eq:matrix_inequality_with_product_term}
	has the product term of the variables $F_i$ and $\bm{S}$.
	To remove this product term, we employ Lemma \ref{lem:geromel_LMI}.
	Let $U_i \in \mathbb{R}^{n \times n}$ be a nonsingular matrix.
	Defining $R_i := S_i^{-1}$ and $\bar F_i := F_i U_i$,
	we have from Lemma \ref{lem:geromel_LMI} that
	\begin{align}
	&\begin{bmatrix}
	U_i^{\top} & 0 \\ 0 & {\bm S}^{-1}
	\end{bmatrix}
	\begin{bmatrix}
	S_i & 
	\begin{bmatrix}
	I & F_i^{\top}
	\end{bmatrix}
	\left(
	\begin{bmatrix}
	\Gamma_{A,i}^{\top} \\ \Gamma_{B,i}^{\top}
	\end{bmatrix}
	+ \kappa_i 
	\begin{bmatrix}
	\Phi_A^{\top} \\ \Phi_B^{\top}
	\end{bmatrix}
	\right) \bm{S} \\
	\star & \bm{S}
	\end{bmatrix}
	\begin{bmatrix}
	U_i & 0 \\ 0 & {\bm S}^{-1}
	\end{bmatrix} \notag \\ 
	&\qquad \geq
	\begin{bmatrix}
	U_i + U_i^{\top} - R_i & 
	G_i^{\top} + H_i^{\top} \Phi^{\top}
	%\begin{bmatrix}
	%U_i^{\top} & \bar F_i^{\top}
	%\end{bmatrix}
	%\left(
	%\begin{bmatrix}
	%\Gamma_{A,i}^{\top} \\ \Gamma_{B,i}^{\top}
	%\end{bmatrix}
	%+ \kappa_i 
	%\begin{bmatrix}
	%\Phi_A^{\top} \\ \Phi_B^{\top}
	%\end{bmatrix}
	%\right) \\
	\\
	\star & \bm{R}
	\end{bmatrix},
	\label{eq:matrix_with_Phi}
	\end{align}
	where 
	the matrices $\bm R$, $\Phi$, $G_i$, and $H_i$ are defined 
	by
	\begin{gather*}
	\bm R := \diag(R_1,\dots,R_{N}),\quad 
	\Phi :=
	\begin{bmatrix}
	\Phi_A  &   \Phi_B 
	\end{bmatrix} \\
	G_i := 
	\begin{bmatrix}
	\Gamma_{A,i}   & \Gamma_{B,i}
	\end{bmatrix}
	\begin{bmatrix}
	U_i  \\  \bar F_i
	\end{bmatrix},\quad
	H_i := \kappa_i 
	\begin{bmatrix}
	U_i \\ \bar F_i
	\end{bmatrix}.
	\end{gather*}
	
	Finally, we obtain the sufficient LMI condition \eqref{eq:LMI_feedback_design} 
	by removing $\Phi$ from the matrix in the right-hand side of the inequality \eqref{eq:matrix_with_Phi}.
	Since $\|\Phi\| < 1$, it follows that
	for every $\rho_i >0$, we have 
	$\rho_i H_i^{\top}(I - \Phi^{\top}\Phi) H_i \geq 0$. Moreover,
	\begin{align*}
	&\begin{bmatrix}
	U_i + U_i^{\top} - R_i & 
	G_i^{\top} + H_i^{\top}\Phi^{\top} \\
	\star & \bm{R}
	\end{bmatrix}
	-
	\begin{bmatrix}
	\rho_i H_i^{\top}(I - \Phi^{\top}\Phi) H_i & 0 \\
	\star & 0 
	\end{bmatrix}  = V^{\top}_i \Omega_i V_i,
	\end{align*}
	where
	\begin{equation*}
	V_i :=
	\begin{bmatrix}
	I & 0 \\
	\Phi H_i & 0 \\
	0 & I \\
	-H_i & 0
	\end{bmatrix},\qquad
	\Omega_i :=\
	\begin{bmatrix}
	U_i + U_i^{\top} - R_i & 0 & G_i^{\top} & \rho_i H_i^{\top} \\
	\star & \rho_i I & I & 0 \\
	\star & \star & \bm{R} & 0 \\
	\star & \star & \star & \rho_i I
	\end{bmatrix}.
	\end{equation*}
	%	\begin{align*}
	%	&\begin{bmatrix}
	%	U_i + U_i^{\top} - R_i & 
	%	G_i^{\top} + H_i^{\top}\Phi \\
	%	* & \bm{R},
	%	\end{bmatrix}
	%	-
	%	\begin{bmatrix}
	%	\rho_i H_i^{\top}(I - \Phi^{\top}\Phi) H_i & 0 \\
	%	0 & 0 
	%	\end{bmatrix} \\
	%	&=
	%	\begin{bmatrix}
	%	I & 0 \\
	%	\Phi H_i & 0 \\
	%	0 & I \\
	%	-H_i & 0
	%	\end{bmatrix}^{\top}
	%	\begin{bmatrix}
	%	U_i + U_i^{\top} - R_i & 0 & G_i^{\top} & \rho_i H_i^{\top} \\
	%	* & \rho_i I & I & 0 \\
	%	* & * & \bm{R} & 0 \\
	%	* & * & * & \rho_i I
	%	\end{bmatrix}
	%	\begin{bmatrix}
	%	I & 0 \\
	%	\Phi H_i & 0 \\
	%	0 & I \\
	%	-H_i & 0
	%	\end{bmatrix}
	%	\end{align*}
	Since $V_i$ is full column rank, it follows that
	if $\Omega_i > 0$, then $V_i^{\top}\Omega_iV_i >0$ and hence the matrix inequality \eqref{eq:matrix_inequality_with_product_term} holds.
	Note that $\rho_i H_i^{\top}$ in $\Omega_i$ has the product of the variables $\rho_i$ and
	$\begin{bmatrix}
	U_i^{\top} & \bar F_i^{\top}
	\end{bmatrix}$.
	However, using the similarity transformation with $\diag(I,1/\rho_i I, I, 1/\rho_i I)$,
	we see that $\Omega_i$ is similar to
	\begin{align}
	\label{eq:suf_SS_last_matrix}
	%	\diag(I,1/\rho_i I, I, 1/\rho_i I) \cdot \Omega \cdot \diag(I,1/\rho_i I, I, 1/\rho_i I) 
	%	&\begin{bmatrix}
	%	I & & & \\
	%	& 1/\rho_i I & & \\
	%	& & I &  \\
	%	& & & 1/\rho_i I 
	%	\end{bmatrix}
	%	\begin{bmatrix}
	%	U_i + U_i^{\top} - R_i & 0 & G_i^{\top} & \rho_i H_i^{\top} \\
	%	* & \rho_i I & I & 0 \\
	%	* & * & \bm{R} & 0 \\
	%	* & * & * & \rho_i I
	%	\end{bmatrix}
	%	\begin{bmatrix}
	%	I & & & \\
	%	& 1/\rho_i I& & \\
	%	& & I &  \\
	%	& & & 1/\rho_i I
	%	\end{bmatrix} \\
	%	&\qquad =
	\begin{bmatrix}
	U_i + U_i^{\top} - R_i & 0 & G_i^{\top} & H_i^{\top} \\
	\star & \lambda_i I & \lambda_i I & 0 \\
	\star & \star & \bm{R} & 0 \\
	\star & \star & \star & \lambda_i I
	\end{bmatrix},
	\end{align}
	in which $\lambda_i := 1/\rho_i$ and the variables appear in a linear form, 
	and the matrix in \eqref{eq:suf_SS_last_matrix} is the one in the left-hand side of 
	the LMIs \eqref{eq:LMI_feedback_design}.
	Thus if the LMIs \eqref{eq:LMI_feedback_design} hold for all $i=1,\dots,N$, then
	the controller \eqref{eq:controller} with $F_i := \bar F_i U_i^{-1}$
	stochastically stabilizes $(A,B)$.
\end{proof}

The controller obtained in Theorem \ref{thm:design_of_gain}
is assumed to know to which box $\mathcal{B}_i$ the parameter $\phi_k$ belongs for 
each $k\in \mathbb{Z}_+$.
The following result can be used to test stabilizability 
and to obtain a stabilizing controller when no information 
about the delays is available.
\begin{corollary}
	Under the same hypothesis as in Theorem~\ref{thm:design_of_gain},
	if
	there exist positive definite matrices $R_i \in \mathbb{R}^{n \times n}$, 
	(not necessarily symmetric) matrices $U \in \mathbb{R}^{n \times n}$, $\bar F \in \mathbb{R}^{m \times n}$, and
	scalars $\lambda_i > 0$ such that
	the following LMIs hold for all $i=1\dots,N$:
	\begin{equation*}
	\begin{bmatrix}
	U + U^{\top} -R_i & 0 \hspace{-1pt} & U^{\top}\Gamma_{A,i}^{\top} + \bar F^{\top} \Gamma_{B,i}^{\top} & 
	\kappa_i 
	\begin{bmatrix}
	U^{\top} & \bar F^{\top}
	\end{bmatrix}\\
	\star  & \lambda_i I  & \lambda_i I \hspace{-1pt} & 0 \\
	\star  & \star  & \bm{R}  & 0 \\
	\star  & \star  & \star  & \lambda_i I
	\end{bmatrix} > 0,
	\end{equation*}
	where $\bm{R} := \diag(R_1,\dots,R_{N})$, then
	the delay-independent controller $F := \bar F U^{-1}$ stochastically stabilizes
	$(A,B)$.
\end{corollary}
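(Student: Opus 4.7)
The plan is to derive the corollary as a direct specialization of Theorem \ref{thm:design_of_gain}. The LMIs stated in the corollary are exactly the LMIs \eqref{eq:LMI_feedback_design} of that theorem under the additional restriction $U_i = U$ and $\bar F_i = \bar F$ for every $i=1,\dots,N$. So the first step is to observe that any feasible tuple $(R_i, U, \bar F, \lambda_i)$ of the corollary produces a feasible tuple $(R_i, U_i, \bar F_i, \lambda_i) = (R_i, U, \bar F, \lambda_i)$ of Theorem \ref{thm:design_of_gain}.

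Second, I would invoke Theorem \ref{thm:design_of_gain} to conclude that the piecewise-constant feedback gain
\[
F(\phi) := F_i = \bar F_i U_i^{-1} \qquad \forall \phi \in \mathcal{B}_i
\]
stochastically stabilizes $(A,B)$. But with the specialization $U_i = U$ and $\bar F_i = \bar F$, we get $F_i = \bar F U^{-1}$ for every $i$, so the function $F(\bullet)$ is actually a constant function on $\mathcal{M}$ taking the single value $\bar F U^{-1} \in \mathbb{R}^{m \times n}$. Hence the resulting controller is delay-independent, which is precisely the claim.

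The only minor point requiring attention is that Theorem \ref{thm:design_of_gain} requires $U_i$ to be nonsingular (it appears inside the congruence transformation $\mathrm{diag}(U_i^\top, \bm S^{-1})$ and we need $F_i = \bar F_i U_i^{-1}$ to be well-defined); but feasibility of the LMI forces the $(1,1)$-block $U + U^\top - R_i$ to be positive definite, which together with $R_i > 0$ gives $U + U^\top > 0$, so $U$ is nonsingular. No other obstacle arises, and no separate computation is needed beyond the substitution argument.
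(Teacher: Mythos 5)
Your proposal is correct and matches the paper's proof, which likewise obtains the corollary as an immediate consequence of Theorem~\ref{thm:design_of_gain} with $U_1=\dots=U_N=U$ and $\bar F_1=\dots=\bar F_N=\bar F$. Your additional observation that feasibility of the $(1,1)$-block forces $U+U^{\top}>0$ and hence nonsingularity of $U$ is a correct (and welcome) detail that the paper leaves implicit.
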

\begin{proof}
	This is an immediate consequence of Theorem \ref{thm:design_of_gain}
	with $U_1 = \dots = U_N = U$ and $\bar F_1 = \dots = \bar F_N = \bar F$. 
\end{proof}

We next see how conservative the proposed gridding method is.
We impose the following three assumptions on discrete-time Markov jump systems.
\begin{assumption}
	\label{assump:lebesgue}
	For all $c \in \mathcal{M}$ and  $\delta >0$, 
	the $\sigma$-finite measure $\mu$ on $\mathcal{M}$ satisfies
	\[
	\mu\big(
	\{\phi \in \mathcal{M}:~\|\phi - c\| < \delta \}
	\big) > 0.
	\]
\end{assumption}
\begin{assumption}
	\label{assump:continuity}
	The functions $A \in {\mathbb H}^{n}_{\sup}$ and
	$B \in {\mathbb H}^{n \times m}_{\sup}$ are continuous.
\end{assumption}
\begin{assumption}
	\label{assump:approximation}
	There exist 
	$S \in \mathbb{H}^{n+}_{\sup}$, $F \in \mathbb{H}^{m\times n}_{\sup}$, and $\epsilon >0$
	such that the Lyapunov inequality \eqref{eq:Lyap_type_SS}	
	holds for $\mu$-almost every $\phi \in \mathcal{M}$. Moreover,
	for every $\epsilon_a, \epsilon_{b} >0$, there exist 
	disjoint boxes $\{\mathcal{B}_i\}_{i=1}^{N}$ 
	whose union is $\mathcal{M}$,
	%with $\bigcup_{i=1}^N \mathcal{B}_i$
	%	 and 
	%	$\mathcal{B}_i \cap \mathcal{B}_j = \emptyset$ ($i,j=1,\dots,N$),  points
	points $c_i \in \mathcal{B}_i$ ($i=1,\dots,N$), and
	piecewise constant functions $S_a \in \mathbb{H}^{n+}_{\sup}$ 
	and $F_a \in \mathbb{H}^{m\times n}_{\sup}$  defined as in \eqref{eq:S_approximation} and \eqref{eq:controller} 
	such that the following three conditions holds:
	\begin{enumerate}
		\item $\|S-S_a\|_{\infty} < \epsilon_a$,\quad $\|F- F_a \|_{\infty} < \epsilon_a$.
		\item For all $i,j=1,\dots,N$, $w_j(\phi)$ defined by \eqref{eq:beta_def} 
		is continuous at $\phi = c_i$.
		\item For $\mu$-almost every $\phi \in \mathcal{B}_i$ and for every $i=1,\dots,N$,
	\end{enumerate}
	\begin{subequations}
		\label{eq:ABbeta_approximation}
		\begin{align}
		\label{eq:AB_approximation}
		&\left\|
		\begin{bmatrix}
		A(\phi) &
		B(\phi)
		\end{bmatrix}
		-
		\begin{bmatrix}
		A(c_i) &
		B(c_i)
		\end{bmatrix}
		\right\| < \epsilon_b \\
		\label{eq:beta_approximation}
		&\left\|
		\begin{bmatrix}
		\sqrt{w_1(\phi)} &
		\cdots &
		\sqrt{w_N(\phi)} 
		\end{bmatrix}
		-
		\begin{bmatrix}
		\sqrt{w_1(c_i)} &
		\cdots &
		\sqrt{w_N(c_i)} 
		\end{bmatrix}
		\right\| < \epsilon_b.
		\end{align}
	\end{subequations}
\end{assumption}
Assumption \ref{assump:lebesgue} holds for the standard Borel measure.
The functions $A$ and $B$ defined by \eqref{eq:A_B_def} 
satisfy Assumption \ref{assump:continuity}.
The first statement of Assumption \ref{assump:approximation}, 
together with Theorem \ref{thm:iff_conds} and  Proposition \ref{prop:real_case},
implies 
that the pair $(A,B)$ is stochastically stabilizable.
Since we approximate a solution of the Lyapunov inequality \eqref{eq:Lyap_type_SS}	by
piecewise-constant functions in Theorem \ref{thm:design_of_gain}, 
we need 1 of Assumption \ref{assump:approximation}.
We use 2 and 3 of Assumption \ref{assump:approximation} together with 
Assumptions \ref{assump:lebesgue} and \ref{assump:continuity} 
to show that a certain inequality holds at $\phi = c_i$ for every $i=1,\dots,N$.
These assumptions on non-zero measure and continuity are required because the Lyapunov inequality \eqref{eq:Lyap_type_SS}
is assumed to be satisfied only at $\mu$-almost everywhere.

The following proposition shows that 
if Assumptions \ref{assump:lebesgue}--\ref{assump:approximation} hold, then
the presented gridding method will guarantee stochastic stabilizability 
given that approximation errors are sufficiently small.
\begin{proposition}
	\label{thm:necessary_stab}
	If Assumptions \ref{assump:lebesgue}--\ref{assump:approximation} hold, then
	there exist disjoint boxes $\{\mathcal{B}_i\}_{i=1}^{N}$ 
	whose union is $\mathcal{M}$ and points
	$c_i \in \mathcal{B}_i$ ($i=1,\dots,N$) such that 
	the LMIs in \eqref{eq:LMI_feedback_design} are feasible.
	%		Let $A \in {\mathbb H}^{n}_{\sup}$ and
	%		$B \in {\mathbb H}^{n \times m}_{\sup}$.
	%Assume that there exist 
	%$S \in \mathbb{H}^{n+}_{\sup}$, $F \in \mathbb{H}^{m\times n}_{\sup}$, and $\epsilon >0$
	%such that the Lyapunov inequality \eqref{eq:Lyap_type_SS}	
	%holds for $\mu$-almost every $\phi \in \mathcal{M}$.
	%Assume also that for every $\epsilon_1 >0$, there exist 
	%$N$ boxes $\{\mathcal{B}_i\}_{i=1}^{N}$ and
	%the piecewise constant functions $S_a \in \mathbb{H}^{n+}_{\sup}$ 
	%and $F_a \in \mathbb{H}^{m\times n}_{\sup}$  defined by \eqref{eq:controller} and \eqref{eq:S_approximation}
	%such that
	%\[
	%\|S - S_a\|_{\infty} < \epsilon_1,\qquad \|F- F_a \|_{\infty} < \epsilon_1
	%\]
	%and
	%\begin{align}
	%	\label{eq:kappa_cond_nec}
	%	\left\|
	%	\begin{bmatrix}
	%		\sqrt{w_1(\phi)} A(\phi) &\sqrt{w_1(\phi)} B(\phi) \\
	%		\vdots & \vdots \\
	%		\sqrt{w_N(\phi)} A(\phi) &
	%		\sqrt{w_N(\phi)} B(\phi)
	%	\end{bmatrix}
	%	-
	%	\begin{bmatrix}
	%		\Gamma_{A,i} &  \Gamma_{B,i}
	%	\end{bmatrix}
	%	%  \begin{bmatrix}
	%	%  \sqrt{w_1(\bar t_i)} A(\bar t_i)^{\top} & \cdots & \sqrt{w_N(\tau)} A(\bar t_i)^{\top} \\ 
	%	%  \sqrt{w_1(\bar t_i)} B(\bar t_i)^{\top} & \cdots & \sqrt{w_N(\bar t_i)} B(\bar t_i)^{\top}
	%	%  \end{bmatrix}
	%	\right\| 
	%	\leq \epsilon_1
	%	\qquad \forall i=1,\dots,N
	%\end{align}
	%for $\mu$-almost every $\phi \in \mathcal{B}_i$, where
	%$w_i$, $\Gamma_{A,i}$, and $\Gamma_{B,i}$ are defined as in Theorem \ref{thm:design_of_gain}.
	%Then the LMIs in \eqref{eq:LMI_feedback_design} are feasible for some 
	%$N_0$ boxes $\{\mathcal{B}_i\}_{i=1}^{N_0}$.
\end{proposition}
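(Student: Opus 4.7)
The plan is to start from the Lyapunov characterization of stochastic stabilizability and produce feasible LMI variables by essentially reversing the derivation in the proof of Theorem~\ref{thm:design_of_gain}. By Assumption~\ref{assump:approximation}, combined with Theorem~\ref{thm:iff_conds} and Proposition~\ref{prop:real_case}, there exist $S \in \mathbb{H}^{n+}_{\sup}$, $F \in \mathbb{H}^{m\times n}_{\sup}$, and $\epsilon > 0$ satisfying \eqref{eq:Lyap_type_SS} with margin $\epsilon I$ for $\mu$-a.e.\ $\phi \in \mathcal{M}$. The argument proceeds in three stages: (i) replace $(S,F)$ by piecewise-constant approximants on a fine partition; (ii) upgrade the resulting inequality from ``$\mu$-a.e.\ in $\mathcal{B}_i$'' to the single point $\phi = c_i$; and (iii) construct the LMI variables $R_i, U_i, \bar F_i, \lambda_i$.

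For (i), fix $\delta > 0$ small and use Assumption~\ref{assump:approximation} to obtain a partition $\{\mathcal{B}_i\}$, centres $c_i$, and piecewise-constant $S_a = (S_i)$, $F_a = (F_i)$ with $\|S - S_a\|_{\infty}, \|F - F_a\|_{\infty} < \delta$ and with $\epsilon_b < \delta$ in \eqref{eq:ABbeta_approximation}. Because $A,B$ are essentially bounded and the left-hand side of \eqref{eq:Lyap_type_SS} depends Lipschitz-continuously on $(S,F)$ in the $\infty$-norm, for $\delta$ sufficiently small (depending only on $\epsilon, \|A\|_\infty, \|B\|_\infty, \|S\|_\infty$) the pair $(S_a, F_a)$ still satisfies \eqref{eq:Lyap_type_SS}, with margin $\epsilon/2$, at $\mu$-a.e.\ $\phi \in \mathcal{M}$.

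For (ii), fix $i$ and view the left-hand side of \eqref{eq:Lyap_type_SS} evaluated at $(S_a, F_a)$ as a matrix-valued function $M_i(\phi)$ on $\mathcal{B}_i$; continuity of $M_i$ at $c_i$ follows from Assumption~\ref{assump:continuity} and item~2 of Assumption~\ref{assump:approximation}, since $S_a$ and $F_a$ are constant on $\mathcal{B}_i$. The set $\{\phi \in \mathcal{B}_i : M_i(\phi) \geq (\epsilon/2) I\}$ has full $\mu$-measure in $\mathcal{B}_i$, so by Assumption~\ref{assump:lebesgue} it meets every neighbourhood of $c_i$; continuity then forces $M_i(c_i) \geq (\epsilon/2) I$, i.e., a strict pointwise Lyapunov inequality at $c_i$.

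For (iii), set $R_i := S_i^{-1}$, $U_i := R_i$, $\bar F_i := F_i R_i$, which make the bound from Lemma~\ref{lem:geromel_LMI} used in the proof of Theorem~\ref{thm:design_of_gain} tight, since $U_i R_i^{-1} U_i^{\top} = R_i = U_i + U_i^{\top} - R_i$. A direct Schur-complement computation (successively eliminating the $(4,4)$-block, then the $(2,2)$-block, then the $(1,1)$-block and performing the congruence by $\bm{S}$) shows that at $\kappa_i = 0$ the LMI \eqref{eq:LMI_feedback_design} is equivalent, for every $\lambda_i$ chosen smaller than $\lambda_{\min}(\bm{R})$, to exactly the pointwise Lyapunov inequality at $c_i$, which holds strictly by (ii); hence \eqref{eq:LMI_feedback_design} is strictly feasible at $\kappa_i = 0$. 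Continuity of the minimum eigenvalue of the LMI matrix in $\kappa_i$ yields a threshold $\kappa_i^{\star} > 0$ below which strict feasibility persists, and \eqref{eq:AB_approximation}--\eqref{eq:beta_approximation} give $\kappa_i = O(\epsilon_b)$, so Assumption~\ref{assump:approximation} allows one to refine the partition until $\kappa_i < \kappa_i^{\star}$ uniformly in $i$, completing the proof. The main obstacle is step (iii): $\lambda_i$ enters three different blocks of \eqref{eq:LMI_feedback_design} and the size of $\bm{R}$ grows with the refinement, so the Schur-complement bookkeeping must both translate the $\epsilon/2$ slack at $c_i$ into an admissible range of $\lambda_i$ and guarantee a uniform lower bound on $\lambda_{\min}(\bm{R})$; the latter is secured by the essential upper bound $\|S\|_\infty < \infty$ inherited from $S \in \mathbb{H}^{n+}_{\sup}$, which bounds the $R_j = S_j^{-1}$ uniformly away from singular.
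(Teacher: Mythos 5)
Your proposal follows essentially the same route as the paper's proof: approximate $(S,F)$ by piecewise-constant functions using Assumption \ref{assump:approximation} and a Lipschitz perturbation bound (the paper uses $\int_{\mathcal{M}}g(\phi,\ell)\mu(d\ell)=1$ and $\|\bm{w}(\phi)\|=1$ to make this quantitative), upgrade the $\mu$-a.e.\ inequality to the grid points $c_i$ via Assumption \ref{assump:lebesgue} and continuity, and then take $U_i=R_i=S_i^{-1}$, $\bar F_i=F_iR_i$ so that Lemma \ref{lem:geromel_LMI} is tight, absorbing the $\kappa_i$-dependent blocks by a Schur-complement perturbation argument. The outline is correct.

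Two details in your step (iii) are stated incorrectly, and they are exactly the places where the paper's explicit bookkeeping matters. First, at $\kappa_i=0$ the LMI \eqref{eq:LMI_feedback_design} reduces (after eliminating the $\lambda_iI$ blocks) to
\begin{equation*}
\begin{bmatrix} R_i & R_i^{\top}\Gamma_{A,i}^{\top}+\bar F_i^{\top}\Gamma_{B,i}^{\top}\\ \star & \bm{R}-\lambda_i I\end{bmatrix}>0,
\end{equation*}
so the admissible range of $\lambda_i$ is governed by the slack $\epsilon_5$ of the pointwise inequality \eqref{eq:Ri_ineq} at $c_i$ (the paper requires $\lambda_i<\epsilon_5$), \emph{not} by $\lambda_{\min}(\bm{R})$; if the pointwise slack is much smaller than $\lambda_{\min}(\bm R)$ your stated condition admits infeasible choices of $\lambda_i$. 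Second, the uniform bound needed to make the $\kappa_i$-perturbation argument survive refinement of the partition is an \emph{upper} bound on $\|R_i\|=\|S_i^{-1}\|$ (it enters through the term $(\kappa_i^2/\lambda_i)\,R_i^{\top}(I+F_i^{\top}F_i)R_i$ produced by the Schur complement), i.e.\ a lower bound on $S_i$; this comes from the Lyapunov inequality itself, $S_a(\phi)\ge\epsilon_2 I$, giving $\|R_i\|<1/\epsilon_2$. The lower bound on $\lambda_{\min}(\bm R)$ that you extract from $\|S\|_{\infty}<\infty$ is true but is not the quantity that controls the perturbation. With these two corrections your continuity-in-$\kappa_i$ argument becomes uniform in $i$ and in the refinement, matching the paper's explicit choice $\lambda_i<\epsilon_5$ and $\epsilon_{\kappa}<\epsilon_2\sqrt{\lambda_i\epsilon_5/(1+(\|F\|_{\infty}+\epsilon_a)^2)}$.
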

The proof of Proposition \ref{thm:necessary_stab} can be found in the Appendix.

%\begin{remark}
%	\label{rem:stabilization_MJS}
%{\em
%	To solve the stabilization problem of discrete-time systems
%	with continuous-valued Markovian jumping parameters,
%		we approximated the function $S$ of the
%		Lyapunov inequality \eqref{eq:Lyap_type_iff} by
%		a piecewise constant function; see \eqref{eq:S_approximation}.
%		Another approach is to approximate the system
%		\[\{(A(\phi),B(\phi)): \phi \in \mathcal{M}\}\]
%		by 
%		\[\{(A(c_i),B(c_i)): \text{$c_i$ is the center of $\mathcal{B}_i$},~~i=1,\dots,N\}\]
%		and then to consider
%		the stabilization problem of systems having
%		{\it discrete-valued} Markovian jumping parameters but time-varying uncertainty in the coefficient matrices. 
%	}
%\end{remark}

\subsection{Stochastic Detectability}
Next we study the stochastic detectability of the 
pair $A \in {\mathbb H}^{n}_{\sup}$ and
$C \in {\mathbb H}^{r\times n}_{\sup}$.
In our LQ problem, 
we need to check the stochastic detectability of $(C,\bar A)$ or
$(Q-WR^{-1}W^{\top},\bar A)$
in
\eqref{eq:tilde_A_def}
and \eqref{eq:Cholesky_decomposition}.

Define 
an observer gain
$L \in {\mathbb H}^{n \times r}_{\sup}$ as
the piecewise-constant function:
\begin{equation}
\label{eq:observer_gain}
L(\phi) := L_i \in \mathbb{R}^{n \times r} \qquad \forall \phi \in \mathcal{B}_i,
\end{equation}
where the disjoint boxes $\{\mathcal{B}_i\}_{i=1}^{N}$  are chosen as in the previous subsection.
Note that the positions of the variables $K,L$ are different
between $A+BK$ (stabilizability) and $A+LC$ (detectability).
Moreover, unlike the case of countable-state Markov chains 
(see, e.g., \cite{Costa2005}),
the duality of stochastic stabilizability and
stochastic detectability is not proved yet for the case of
continuous-state Markov chains.
Hence we cannot use Theorem~\ref{thm:design_of_gain} directly, but
the gridding method still provides a sufficient condition
for stochastic detectability in terms of LMIs.
%We omit the proof due to space constraints.
\begin{theorem}
	\label{thm:design_of_observer_gain}
	Let $A \in {\mathbb H}^{n}_{\sup}$ and
	$C \in {\mathbb H}^{r \times n}_{\sup}$.
	For each $i=1,\dots,N$,
	define $w_i$ as in Theorem \ref{thm:design_of_gain}
	and
	\begin{equation*}
	\begin{bmatrix}
	\Upsilon_{A,i} \\ \Upsilon_{C,i}
	\end{bmatrix} := 
	\begin{bmatrix}
	A(c_i) \\ 
	C(c_i) 
	\end{bmatrix},\quad 
	\Upsilon_{w,i} :=
	\begin{bmatrix}
	\sqrt{w_1(c_i)} I  &
	\cdots &
	\sqrt{w_N(c_i)} I
	\end{bmatrix}.
	\end{equation*}
	Assume that, 
	for all $i=1,\dots,N$, 
	scalars $\kappa_{A,i},\kappa_{w,i} > 0$ satisfy
	\begin{subequations}
		\label{eq:AC_beta_bound}
		\begin{align}
		&\left\|
		\begin{bmatrix}
		A(\phi) \\ 
		C(\phi) 
		\end{bmatrix}
		-
		\begin{bmatrix}
		\Upsilon_{A,i} \\ \Upsilon_{C,i}
		\end{bmatrix}
		%  \begin{bmatrix}
		%  \sqrt{w_1(\bar t_i)} A(\bar t_i)^{\top} & \cdots & \sqrt{w_N(\phi)} A(\bar t_i)^{\top} \\ 
		%  \sqrt{w_1(\bar t_i)} B(\bar t_i)^{\top} & \cdots & \sqrt{w_N(\bar t_i)} B(\bar t_i)^{\top}
		%  \end{bmatrix}
		\right\| 
		\leq \kappa_{A,i} \\
		&\left\|
		\begin{bmatrix}
		\sqrt{w_1(\phi)} I  &
		\cdots  &
		\sqrt{w_N(\phi)} I 
		\end{bmatrix} -
		\Upsilon_{w,i} 
		\right\| \leq
		\kappa_{w,i}
		%	\qquad \forall \phi \in  \mathcal{B}_i.
		\end{align}
	\end{subequations}
	for $\mu$-almost every $\phi \in  \mathcal{B}_i$.
	If
	there exists positive definite matrices $S_i\in \mathbb{R}^{n \times n}$, 
	(not necessarily symmetric) matrices $U_i \in \mathbb{R}^{n \times n}$, $
	\bar L_i\in \mathbb{R}^{n \times r}$, and
	scalars $\lambda_{i}, \rho_{i} > 0$ such that
	the following LMIs
	hold for all $i=1,\dots,N$:
	\begin{equation}
	\label{eq:LMI_detectability}
	\begin{bmatrix}
	U_i + U_i^{\top}  &  0 & U_i \Upsilon_{A,i} + \bar L_i \Upsilon_{C,i} & 
	\kappa_{A,i}
	\begin{bmatrix}
	U_i  &  \bar L_i
	\end{bmatrix} & 
	0 & \Upsilon_{w,i} 
	{\bm S} & \rho_{i} I \\
	\star  & \lambda_i I & \lambda_i I & 0 & 0 & 0 & 0 \\
	\star & \star & S_i & 0 & 0 & 0 & 0 \\
	\star & \star & \star & \lambda_{i} I & 0 & 0 & 0 \\
	\star & \star & \star & \star & \rho_{i}I & \kappa_{w,i}{\bm S} & 0 \\
	\star   & \star & \star & \star & \star & {\bm S} & 0 \\
	\star   & \star & \star & \star & \star & \star & \rho_{i} I
	\end{bmatrix}
	> 0,
	\end{equation}
	where $\bm{S} := \diag(S_1,\dots,S_{N})$,
	%	\begin{gather*}
	%	\bm{S} :=
	%	\begin{bmatrix}
	%	S_1 &   & 0 \\
	%	& \ddots &  \\
	%	0 &  & S_N
	%	\end{bmatrix},
	%	\end{gather*}
	then the pair $(C,A)$ is
	stochastically detectable by the
	observer gain $L$ in
	\eqref{eq:observer_gain} with $L_i := U_i^{-1} \bar L_i$.
\end{theorem}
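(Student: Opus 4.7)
The plan is to follow the template of Theorem~\ref{thm:design_of_gain} while handling an additional source of uncertainty that does not arise in the stabilizability case. By Theorem~\ref{thm:iff_conds} and Proposition~\ref{prop:real_case}, $(C,A)$ is stochastically detectable if and only if there exist $S \in \mathbb{H}^{n+}_{\sup}$, $L \in \mathbb{H}^{n \times r}_{\sup}$, and $\epsilon > 0$ such that
\begin{equation*}
S(\phi) - (A(\phi)+L(\phi)C(\phi))^{\top} \left(\int_{\mathcal{M}} g(\phi,\ell) S(\ell)\mu(d\ell)\right)(A(\phi)+L(\phi)C(\phi)) \geq \epsilon I
\end{equation*}
for $\mu$-almost every $\phi \in \mathcal{M}$. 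I would search for piecewise-constant solutions $S(\phi)=S_i$ and $L(\phi)=L_i$ on each box $\mathcal{B}_i$, so that the inner conditional expectation equals $\Upsilon_w(\phi)\bm{S}\Upsilon_w(\phi)^{\top}$ with $\Upsilon_w(\phi):=\begin{bmatrix}\sqrt{w_1(\phi)}\,I & \cdots & \sqrt{w_N(\phi)}\,I\end{bmatrix}$ and $\bm{S}:=\diag(S_1,\ldots,S_N)$.

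After the change of variable $\bar L_i:=U_iL_i$, the factorization $A+L_iC = U_i^{-1}(U_iA+\bar L_iC)$ rewrites the Lyapunov quadratic form so that its Schur-complement representation places $U_iE[S](\phi)^{-1}U_i^{\top}$ in the $(2,2)$ block. The estimate $U_iE[S](\phi)^{-1}U_i^{\top} \geq U_i+U_i^{\top}-E[S](\phi)$ from Lemma~\ref{lem:geromel_LMI}, followed by a second Schur complement that splits off $E[S](\phi)=\Upsilon_w(\phi)\bm{S}\Upsilon_w(\phi)^{\top}$ into its own block, reduces the Lyapunov inequality to the sufficient three-block matrix inequality
\begin{equation*}
\begin{bmatrix} U_i+U_i^{\top} & U_iA(\phi)+\bar L_iC(\phi) & \Upsilon_w(\phi)\bm{S} \\ \star & S_i & 0 \\ \star & \star & \bm{S} \end{bmatrix} > 0 \quad \text{for } \mu\text{-a.e.\ } \phi \in \mathcal{B}_i,
\end{equation*}
whose entries exactly match blocks $(1,1), (1,3), (3,3), (1,6), (6,6)$ of \eqref{eq:LMI_detectability}. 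Substituting $\begin{bmatrix}A(\phi)\\ C(\phi)\end{bmatrix}=\begin{bmatrix}\Upsilon_{A,i}\\ \Upsilon_{C,i}\end{bmatrix}+\kappa_{A,i}\Phi_1$ and $\Upsilon_w(\phi)=\Upsilon_{w,i}+\kappa_{w,i}\Phi_2$ with $\|\Phi_1\|,\|\Phi_2\|<1$ via \eqref{eq:AC_beta_bound} perturbs the $(1,3)$ and $(1,6)$ blocks by $\kappa_{A,i}[U_i,\bar L_i]\Phi_1$ and $\kappa_{w,i}\Phi_2\bm{S}$ respectively, each of the form $E\Phi F+F^{\top}\Phi^{\top}E^{\top}$. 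I would absorb each by Petersen's inequality with scalar multipliers $\lambda_i>0$ (for $\Phi_1$) and $\rho_i>0$ (for $\Phi_2$); each absorption contributes two slack rows/columns in Schur-complement form, and routing them to blocks $2,4$ (with couplings $\lambda_iI$ to block $3$ and $\kappa_{A,i}[U_i,\bar L_i]$ to block $1$) and to blocks $5,7$ (with couplings $\kappa_{w,i}\bm{S}$ to block $6$ and $\rho_iI$ to block $1$) yields precisely the seven-block LMI \eqref{eq:LMI_detectability}.

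The main obstacle---and the reason \eqref{eq:LMI_detectability} is noticeably larger than \eqref{eq:LMI_feedback_design}---is that $\begin{bmatrix}A(\phi)\\ C(\phi)\end{bmatrix}$ and $\Upsilon_w(\phi)$ sit on opposite sides of the perturbed quadratic form and cannot be merged into a single data matrix $\sqrt{w_j(\phi)}\begin{bmatrix}A(\phi)\\ C(\phi)\end{bmatrix}$, as was done in the stabilizability proof, because the observer gain $L$ enters on the left of $C$ rather than on the right of $B$. Two independent perturbation bounds $\kappa_{A,i},\kappa_{w,i}$ and two independent S-procedure multipliers $\lambda_i,\rho_i$ are therefore unavoidable, and the delicate bookkeeping is to combine the two nested Petersen absorptions without generating products of decision variables; once that is verified, the remainder is a routine adaptation of the proof of Theorem~\ref{thm:design_of_gain}.
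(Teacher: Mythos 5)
Your proposal follows essentially the same route as the paper's proof: reduce detectability to the Lyapunov inequality via Theorem~\ref{thm:iff_conds} and Proposition~\ref{prop:real_case}, take piecewise-constant $S$ and $L$, apply Lemma~\ref{lem:geromel_LMI} after the congruence with $\diag(U_i,I)$ to reach the three-block inequality \eqref{eq:SD_matrix_inequality_with_phi}, and then absorb the two independent perturbations $\Phi_A,\Phi_C$ and $\Phi_w$ with separate multipliers exactly as in \eqref{eq:LMI_detectability}. Your diagnosis of why the data and weight matrices cannot be merged (the gain $L$ acts on the left of $C$) and why two bounds $\kappa_{A,i},\kappa_{w,i}$ are needed matches the paper's argument; the only step left implicit is the final similarity transformation with $\diag(I,1/\rho_i I,I,1/\rho_i I,I,I,I)$ that removes the product $\rho_i H_{A,i}$ of decision variables, which the paper carries out explicitly.
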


\begin{proof}
	As in the proof of Theorem \ref{thm:design_of_gain},
	we see from Theorem \ref{thm:iff_conds} and Proposition \ref{prop:real_case},  that
	$(C,A)$ is stochastically detectable if and only if
	there exist 
	$S \in \mathbb{H}^{n+}_{\sup}$, $L \in \mathbb{H}^{n\times r}_{\sup}$, and $\epsilon >0$
	such that the following Lyapunov inequality 
	holds for $\mu$-almost every $\phi \in \mathcal{M}$:
	\begin{equation}
	\label{eq:Lyap_type_SD}
	S(\phi) - 
	(A(\phi)+L(\phi)C(\phi))^{\top} 
	\left(
	\int_{\mathcal{M}} g(\phi,\ell) S(\ell)\mu(d\ell)
	\right)
	(A(\phi)+L(\phi)C(\phi)) \geq \epsilon I.
	\end{equation}
	We prove that 
	if the LMIs in \eqref{eq:LMI_detectability} are feasible for all $i=1,\dots,N$,
	then the Lyapunov inequality \eqref{eq:Lyap_type_SD} holds with
	$S\in {\mathbb H}^{n+}_{\sup}$ defined as 
	a piecewise-constant matrix function $S$ in \eqref{eq:S_approximation}.
	
	By the definitions of $L$ and $S$, we have from
	the Schur complement formula and \eqref{eq:integral_to_summation} that
	the Lyapunov inequity \eqref{eq:Lyap_type_SD} can be transformed into
	\begin{equation}
	\label{eq:Um_sum_inv}
	\begin{bmatrix}
	\big(\sum_{j=1}^{N} w_j(\phi)S_j\big)^{-1} & A(\phi) +L_iC(\phi) \\
	\star & S_i - \epsilon I
	\end{bmatrix} \geq 0
	\end{equation}
	for $\mu$-almost every $\phi \in \mathcal{B}_i$ and for every $i=1,\dots,N$.
	First we remove the nonlinear term $\big(\sum_{j=1}^{N} w_j(\phi)S_j\big)^{-1}$ in \eqref{eq:Um_sum_inv},
	by using Lemma~\ref{lem:geromel_LMI}.
	If $U_i \in \mathbb{R}^{n\times n}$ is nonsingular, then the matrix in the left-hand side of 
	this inequality is similar to
	\begin{align*}
	\begin{bmatrix}
	U_i & 0 \\
	0 & I
	\end{bmatrix}
	\begin{bmatrix} 
	\big(\sum_{j=1}^{N} w_j(\phi)S_j \big)^{-1} & A(\phi) +L_iC(\phi) \\
	\star & S_i - \epsilon I
	\end{bmatrix} \!
	\begin{bmatrix}
	U_i^{\top} & 0 \\
	0 & I
	\end{bmatrix}.
	\end{align*}
	Using Lemma~\ref{lem:geromel_LMI},
	we see that
	\begin{align*}
	&\begin{bmatrix}
	U_i & 0 \\
	0 & I
	\end{bmatrix}
	\begin{bmatrix}
	\big(\sum_{j=1}^{N} w_j(\phi)S_j \big)^{-1} &　A(\phi) +L_iC(\phi) \\
	\star & S_i - \epsilon I
	\end{bmatrix} 
	\begin{bmatrix}
	U_i^{\top} & 0 \\
	0 & I
	\end{bmatrix} \\
	%&=
	%\begin{bmatrix}U_i
	%(\sum_{m=1}^N w_m(\phi)S_m)^{-1}U_i^{\top} & U_i(A(\phi) +L_iC(\phi)) \\
	%* & S_i - \epsilon I
	%\end{bmatrix} \\
	&\quad \geq
	\begin{bmatrix}U_i
	+ U_i^{\top} - \sum_{j=1}^{N} w_j(\phi)S_j
	& U_i(A(\phi) +L_iC(\phi))  \\
	\star & S_i - \epsilon I
	\end{bmatrix} \\
	&\quad =
	\begin{bmatrix}
	U_i
	+ U_i^{\top} 
	& U_i(A(\phi) +L_iC(\phi))  \\
	\star & S_i - \epsilon I
	\end{bmatrix} -
	\begin{bmatrix}
	\sqrt{w_1(\phi)} I   \\
	\vdots & 0 \\
	\sqrt{w_{N}(\phi)} I 
	\end{bmatrix}^{\top}
	{\bm  S}
	\begin{bmatrix}
	\sqrt{w_1(\phi)} I  & 0 \\
	\vdots & 0 \\
	\sqrt{w_{N}(\phi)} I & 0
	\end{bmatrix},
	\end{align*}
	where $\bm{S} := \diag(S_1,\dots,S_{N})$.
	Therefore,
	from the Schur complement formula,
	the matrix inequality \eqref{eq:Um_sum_inv} holds if
	\begin{equation}
	\label{eq:SD_matrix_inequality_with_phi}
	\begin{bmatrix}
	U_i + U_i^{\top} 
	& U_i(A(\phi) +L_iC(\phi))  &
	{\bm S}_{w}(\phi)\\
	\star & S_i & 0 \\
	\star & \star & \bm S
	\end{bmatrix} > 0,
	\end{equation}
	where ${\bm S}_{w}$ is defined by
	\begin{equation}
	\label{eq:S_beta_def}
	{\bm S}_{w}(\phi) :=
	\begin{bmatrix}
	\sqrt{w_1(\phi)} I   &
	\cdots  &
	\sqrt{w_{N}(\phi)} I 
	\end{bmatrix}
	\bm S\qquad 
	\forall \phi \in \mathcal{M}.
	\end{equation}
	
	%Define $\bar t_i := (t_i + t_{i+1})/2$ and
	%\begin{equation*}
	%\begin{bmatrix}
	%\Upsilon_{A,i} \\ \Upsilon_{C,i}
	%\end{bmatrix} := 
	%\begin{bmatrix}
	%A(\bar t_i) \\ 
	%C(\bar t_i) 
	%\end{bmatrix},\qquad
	%\Upsilon_{w,i} := 
	%\begin{bmatrix}
	%\sqrt{w_1(\phi)} I  &
	%\cdots & 
	%\sqrt{w_N(\phi)} I
	%\end{bmatrix},
	%\end{equation*}
	%and take $\kappa_{A,i},\kappa_{w,i} > 0$ such that for all $l \in [t_i, t_{i+1})$,
	%\begin{align*}
	%\left\|
	%\begin{bmatrix}
	%A(\phi) \\ 
	%C(\phi) 
	%\end{bmatrix}
	%-
	%\begin{bmatrix}
	%\Upsilon_{A,i} \\ \Upsilon_{C,i}
	%\end{bmatrix}
	%%  \begin{bmatrix}
	%%  \sqrt{w_1(\bar t_i)} A(\bar t_i)^{\top} & \cdots & \sqrt{w_N(\phi)} A(\bar t_i)^{\top} \\ 
	%%  \sqrt{w_1(\bar t_i)} B(\bar t_i)^{\top} & \cdots & \sqrt{w_N(\bar t_i)} B(\bar t_i)^{\top}
	%%  \end{bmatrix}
	%\right\| 
	%\leq \kappa_{A,i},\qquad
	%\left\|
	%\begin{bmatrix}
	%\sqrt{w_1(\phi)} I  &
	%\cdots  &
	%\sqrt{w_N(\phi)} I 
	%\end{bmatrix} -
	%\Upsilon_{w,i} 
	%\right\| \leq
	%\kappa_{w,i}.
	%\end{align*}
	
	Let us next discretize the matrix inequality \eqref{eq:SD_matrix_inequality_with_phi},
	by using the inequalities \eqref{eq:AC_beta_bound}. In other words,
	we 
	show that if the LMIs \eqref{eq:LMI_detectability} are feasible
	for every $i=1,\dots,N$, then the matrix inequality 
	\eqref{eq:SD_matrix_inequality_with_phi} holds for 
	$\mu$-almost every $\phi \in \mathcal{B}_i$ and for every $i=1,\dots,N$.
	From the inequalities  \eqref{eq:AC_beta_bound}, we see that
	for $\mu$-almost every $\phi \in \mathcal{B}_i$, there exist 
	\[\begin{bmatrix}
	\Phi_A \\ \Phi_C
	\end{bmatrix} \in \mathbb{R}^{(n+r) \times n},~\Phi_{w} \in \mathbb{R}^{n \times nN}
	\text{~with~}
	\left\|
	\begin{bmatrix}
	\Phi_A \\ \Phi_C
	\end{bmatrix}
	\right\| < 1,~\|\Phi_w\| < 1
	\] such that
	\begin{gather*}
	\begin{bmatrix}
	A(\phi) \\ 
	C(\phi) 
	\end{bmatrix}
	=
	\begin{bmatrix}
	\Upsilon_{A,i} \\ \Upsilon_{C,i}
	\end{bmatrix}
	%  \begin{bmatrix}
	%  \sqrt{w_1(\bar t_i)} A(\bar t_i)^{\top} & \cdots & \sqrt{w_N(\phi)} A(\bar t_i)^{\top} \\ 
	%  \sqrt{w_1(\bar t_i)} B(\bar t_i)^{\top} & \cdots & \sqrt{w_N(\bar t_i)} B(\bar t_i)^{\top}
	%  \end{bmatrix}
	+ \kappa_{A,i} 
	\begin{bmatrix}
	\Phi_A \\ \Phi_C
	\end{bmatrix} \\[4pt]
	\begin{bmatrix}
	\sqrt{w_1(\phi)} I  &
	\cdots  &
	\sqrt{w_{N}(\phi)} I 
	\end{bmatrix}
	=
	\Upsilon_{w,i}  
	+
	\kappa_{w,i} \Phi_{w}.
	\end{gather*}
	%$\Phi \in \bm \Phi$, 
	Then we obtain
	\begin{align*}
	&
	\begin{bmatrix}
	U_i + U_i^{\top} 
	& U_i(A(\phi) +L_iC(\phi))  & {\bm S}_{w}(\phi) \\
	\star & S_i & 0 \\
	\star & \star & \bm S
	\end{bmatrix} \\
	%	&= 
	%	\begin{bmatrix}
	%	U_i + U_i^{\top} 
	%	& 
	%	\begin{bmatrix}
	%	U_i & \bar L_i
	%	\end{bmatrix}
	%	\left(
	%	\begin{bmatrix}
	%	\Gamma_{A,i} \\ F_{C,i}
	%	\end{bmatrix}
	%	+ \kappa_i 
	%	\begin{bmatrix}
	%	\Phi_{A} \\ \Phi_{C}
	%	\end{bmatrix}
	%	\right)
	%	&
	%	(F_{w,i} + \kappa_{w,i} \Phi_{w})\bm S\\
	%	* & S_i & 0 \\
	%	* & * & \bm S
	%	\end{bmatrix} \\
	&\qquad =
	\begin{bmatrix}
	U_i + U_i^{\top} 
	& 
	G_{A,i} + H_{A,i} \Phi
	&
	G_{w,i} + \Phi_{w} H_{w,i} \\
	\star & S_i & 0 \\
	\star & \star & \bm S
	\end{bmatrix},
	\end{align*}
	where, using $\bar L_i := U_iL_i$, we define 
	the matrices $\Phi$, $G_{A,i}$, $H_{A,i}$, 
	$G_{w,i}$, and
	$H_{w,i}$ by
	\begin{gather*}
	\Phi := 
	\begin{bmatrix}
	\Phi_A \\ \Phi_C
	\end{bmatrix},\quad
	G_{A,i} := 
	\begin{bmatrix}
	U_i  &  \bar L_i
	\end{bmatrix}
	\begin{bmatrix}
	\Upsilon_{A,i} \\ \Upsilon_{C,i}
	\end{bmatrix} \\[4pt]
	H_{A,i} := \kappa_{A,i}
	\begin{bmatrix}
	U_i  &  \bar L_i
	\end{bmatrix},\quad 
	G_{w,i}
	:=
	\Upsilon_{w,i} 
	{\bm S},\quad
	H_{w,i}
	:=
	\kappa_{w,i} 
	{\bm S}.
	\end{gather*}	
	Since $\|\Phi\|, \|\Phi_{w}\| < 1$, it follows that
	for all $\rho_i, \rho_{w,i} > 0$, we have
	\[\rho_i H_{A,i} (I - \Phi \Phi^{\top}) H_{A,i}^{\top}  +
	\rho_{w,i}   (I - \Phi_{w} \Phi_{w}^{\top}) \geq 0.\] 
	Moreover,
	\begin{align*}
	&\begin{bmatrix}
	U_i + U_i^{\top} 
	& 
	G_{A,i} + H_{A,i} \Phi
	&
	G_{w,i} + \Phi_{w} H_{w,i} \\
	\star & S_i & 0 \\
	\star & \star & \bm S
	\end{bmatrix} \\
	&\qquad -
	\begin{bmatrix}
	\rho_i H_{A,i} (I - \Phi \Phi^{\top}) H_{A,i}^{\top}  +
	\rho_{w,i}   (I - \Phi_{w} \Phi_{w}^{\top}) 
	& 
	0 
	&
	0 \\
	\star & 0 & 0 \\
	\star & \star & 0
	\end{bmatrix} 
	= V_i^{\top}\Omega_i V_i,
	%	\begin{bmatrix}
	%	I & 0 & 0 \\
	%	\Phi^{\top} H_{A,i}^{\top} & 0 & 0\\
	%	0 & I & 0 \\
	%	-H_{A,i}^{\top} & 0 & 0 \\
	%	\Phi_{w}^{\top}  & 0 & 0 \\
	%	0 & 0 & I \\
	%	-I & 0 & 0
	%	\end{bmatrix}^{\top}
	%	\begin{bmatrix}
	%	U_i + U_i^{\top}  &  0 & G_{A,i} & \rho_i H_{A,i} & 
	%	0 & G_{w,i} & \rho_{w,i} I \\
	%	*  & \rho_i I & I & 0 & 0 & 0 & 0 \\
	%	* & * & S_i & 0 & 0 & 0 & 0 \\
	%	* & * & * & \rho_i I & 0 & 0 & 0 \\
	%	* & * & * & * & \rho_{w,i}I &  H_{w,i} & 0 \\
	%	*   & * & * & * & * & {\bm S} & 0 \\
	%	*   & * & * & * & * & * & \rho_{w,i} I
	%	\end{bmatrix}
	%	\begin{bmatrix}
	%	I & 0 & 0 \\
	%	\Phi^{\top} H_{A,i}^{\top} & 0 & 0\\
	%	0 & I & 0 \\
	%	-H_{A,i}^{\top} & 0 & 0 \\
	%	\Phi_{w}^{\top} & 0 & 0 \\
	%	0 & 0 & I \\
	%	-I & 0 & 0
	%	\end{bmatrix}.
	\end{align*}
	where
	\begin{align*}
	V_i := 
	\begin{bmatrix}
	I & 0 & 0 \\
	\Phi^{\top} H_{A,i}^{\top} & 0 & 0\\
	0 & I & 0 \\
	-H_{A,i}^{\top} & 0 & 0 \\
	\Phi_{w}^{\top} & 0 & 0 \\
	0 & 0 & I \\
	-I & 0 & 0
	\end{bmatrix},\quad
	\Omega_i := 
	%			\begin{bmatrix}
	%			U_i + U_i^{\top}  &  0 & G_{A,i} & \rho_i H_{A,i} & 
	%			0 & G_{w,i} & \rho_{w,i} I \\
	%			*  & \rho_i I & I & 0 & 0 & 0 & 0 \\
	%			* & * & S_i & 0 & 0 & 0 & 0 \\
	%			* & * & * & \rho_i I & 0 & 0 & 0 \\
	%			* & * & * & * & \rho_{w,i}I &  H_{w,i} & 0 \\
	%			*   & * & * & * & * & {\bm S} & 0 \\
	%			*   & * & * & * & * & * & \rho_{w,i} I
	%			\end{bmatrix}.
	\begin{bmatrix}
	U_i + U_i^{\top} \hspace{-4.15pt} &  0 \hspace{-4.15pt}& G_{A,i} \hspace{-4.15pt}& \rho_i H_{A,i} \hspace{-4.15pt}& 
	0 \hspace{-4.15pt}& G_{w,i} \hspace{-4.15pt}& \rho_{w,i} I \\
	\star \hspace{-4.15pt} & \rho_i I \hspace{-4.15pt}& I \hspace{-4.15pt}& 0 \hspace{-4.15pt}& 0 \hspace{-4.15pt}& 0 \hspace{-4.15pt}& 0 \\
	\star \hspace{-4.15pt}& \star \hspace{-4.15pt}& S_i \hspace{-4.15pt}& 0 \hspace{-4.15pt}& 0 \hspace{-4.15pt}& 0 \hspace{-4.15pt}& 0 \\
	\star \hspace{-4.15pt}& \star \hspace{-4.15pt}& \star \hspace{-4.15pt}& \rho_i I \hspace{-4.15pt}& 0 \hspace{-4.15pt}& 0 \hspace{-4.15pt}& 0 \\
	\star \hspace{-4.15pt}& \star \hspace{-4.15pt}& \star \hspace{-4.15pt}& \star \hspace{-4.15pt}& \rho_{w,i}I \hspace{-4.15pt}&  H_{w,i} \hspace{-4.15pt}& 0 \\
	\star \hspace{-4.15pt}  & \star \hspace{-4.15pt}& \star \hspace{-4.15pt}& \star \hspace{-4.15pt}& \star \hspace{-4.15pt}& {\bm S} \hspace{-4.15pt}& 0 \\
	\star \hspace{-4.15pt}  & \star \hspace{-4.15pt}& \star \hspace{-4.15pt}& \star \hspace{-4.15pt}& \star \hspace{-4.15pt}& \star \hspace{-4.15pt}& \rho_{w,i} I
	\end{bmatrix}\!\!.
	\end{align*}
	Since $V_i$ is full column rank, $\Omega_i >0$ leads to $V_i\Omega_iV_i >0$, which implies that 
	the matrix inequality \eqref{eq:SD_matrix_inequality_with_phi} is satisfied
	for $\mu$-almost every $\phi \in \mathcal{B}_i$.
	Note that $\rho_i H_{A,i}$ has the product of the variables $\rho_i$ and
	$\begin{bmatrix}
	U_i & \bar L_i
	\end{bmatrix}$.
	However, applying
	the similarity transformation with
	${\rm diag}(I,1/\rho_i I,I,1/\rho_i I,I, I,I)$,
	we see that $\Omega_i$ is similar to the following matrix:
	\begin{equation*}
	\begin{bmatrix}
	U_i + U_i^{\top}  &  0 & G_{A,i} &  H_{A,i} & 
	0 & G_{w,i} & \rho_{w,i} I \\
	\star  & \lambda_{i} I & \lambda_{i}I & 0 & 0 & 0 & 0 \\
	\star & \star & S_i & 0 & 0 & 0 & 0 \\
	\star & \star & \star & \lambda_{i} I & 0 & 0 & 0 \\
	\star & \star & \star & \star & \rho_{w,i}I &  H_{w,i} & 0 \\
	\star   & \star & \star & \star & \star & {\bm S} & 0 \\
	\star   & \star & \star & \star & \star & \star & \rho_{w,i} I
	\end{bmatrix},
	\end{equation*}
	where  $\lambda_{i} := 1/\rho_i$, and
	this matrix is the one in the left-hand side of \eqref{eq:LMI_detectability}.
	Thus, if
	the LMIs~\eqref{eq:LMI_detectability} are feasible,
	then $(C,A)$ is stochastically detectable.
\end{proof}

As in the case of stochastic stabilizability, 
we see that the proposed gridding method does not
introduce conservatism if approximation errors are sufficiently small.
\begin{assumption}
	\label{assump:continuity_detect}
	The functions $A \in {\mathbb H}^{n}_{\sup}$ and
	$C \in {\mathbb H}^{r \times n}_{\sup}$ are continuous.
\end{assumption}
\begin{assumption}
	\label{assump:approximation_detect}
	There exist 
	$S \in \mathbb{H}^{n+}_{\sup}$, $L \in \mathbb{H}^{n\times r}_{\sup}$, and $\epsilon >0$
	such that the Lyapunov inequality \eqref{eq:Lyap_type_SD}	
	holds for $\mu$-almost every $\phi \in \mathcal{M}$. Moreover,
	for every $\epsilon_a, \epsilon_b >0$, there exist 
	disjoint boxes $\{\mathcal{B}_i\}_{i=1}^{N}$ 
	whose union is $\mathcal{M}$, points $c_i \in \mathcal{B}_i$ ($i=1,\dots,N$), and
	piecewise constant functions $S_a \in \mathbb{H}^{n+}_{\sup}$ 
	and $L_a \in \mathbb{H}^{n\times r}_{\sup}$  defined by \eqref{eq:S_approximation} and \eqref{eq:observer_gain}
	such that the following three conditions holds:
	\begin{enumerate}
		\item $\|S-S_a\|_{\infty} < \epsilon_a$,\quad $\|L- L_a \|_{\infty} < \epsilon_a$.
		\item For all $i,j=1,\dots,N$, $w_j(\phi)$ defined by \eqref{eq:beta_def} 
		is continuous at $\phi = c_i$.
		\item For $\mu$-almost every $\phi \in \mathcal{B}_i$ and for every $i=1,\dots,N$,
		the following inequality and \eqref{eq:beta_approximation} are satisfied:
		\[
		\left\|
		\begin{bmatrix}
		A(\phi) \\
		C(\phi)
		\end{bmatrix}
		-
		\begin{bmatrix}
		A(c_i) \\
		C(c_i)
		\end{bmatrix}
		\right\| < \epsilon_b.
		\]
	\end{enumerate}
\end{assumption}

\begin{proposition}
	\label{thm:necessary_detec}
	If Assumptions \ref{assump:lebesgue}, \ref{assump:continuity_detect}, and \ref{assump:approximation_detect} hold, then
	there exist disjoint boxes $\{\mathcal{B}_i\}_{i=1}^{N}$ 
	whose union is $\mathcal{M}$ and points
	$c_i \in \mathcal{B}_i$ ($i=1,\dots,N$) such that 
	the LMIs in \eqref{eq:LMI_detectability} are feasible.
\end{proposition}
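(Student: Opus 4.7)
The plan is to mirror the proof of Proposition~\ref{thm:necessary_stab} in the Appendix, adapting it to the observer structure $A + LC$ and to the placement of the weights $w_j$ in the detectability Lyapunov inequality \eqref{eq:Lyap_type_SD}. By Assumption~\ref{assump:approximation_detect}, fix $S \in \mathbb{H}^{n+}_{\sup}$, $L \in \mathbb{H}^{n\times r}_{\sup}$, and $\epsilon > 0$ that satisfy \eqref{eq:Lyap_type_SD} for $\mu$-almost every $\phi \in \mathcal{M}$. For parameters $\epsilon_a, \epsilon_b > 0$ to be chosen sufficiently small, let $\{\mathcal{B}_i\}_{i=1}^{N}$, $\{c_i\}$, and the piecewise-constant $S_a, L_a$ be the partition, centers, and approximations provided by the assumption, and set $S_i := S_a(c_i)$, $L_i := L_a(c_i)$.

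The first technical step is to show that the Lyapunov inequality \eqref{eq:Lyap_type_SD} still holds, with a slightly reduced margin, when $S, L$ are replaced by the piecewise constant $S_a, L_a$ and the inequality is evaluated at each center $\phi = c_i$. To this end, use Assumption~\ref{assump:lebesgue} to select, in every neighborhood of $c_i$, a point $\phi^{\star}$ where the original \eqref{eq:Lyap_type_SD} is valid; then combine the continuity of $A$ and $C$ at $c_i$ (Assumption~\ref{assump:continuity_detect}), the continuity of each $w_j$ at $c_i$ (item~2 of Assumption~\ref{assump:approximation_detect}), and the uniform bounds $\|S - S_a\|_{\infty}, \|L - L_a\|_{\infty} < \epsilon_a$ to transfer the inequality from $\phi^{\star}$ to $c_i$ with the piecewise constant functions. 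Using \eqref{eq:beta_approximation} and item~3 of Assumption~\ref{assump:approximation_detect}, propagate this inequality from $c_i$ to $\mu$-almost every $\phi \in \mathcal{B}_i$; shrinking $\epsilon_a, \epsilon_b$ guarantees that the discretized matrix inequality \eqref{eq:SD_matrix_inequality_with_phi} holds with a uniform positive margin.

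To produce the LMI \eqref{eq:LMI_detectability}, reverse the construction in the proof of Theorem~\ref{thm:design_of_observer_gain}. Choose $U_i$ close to $\sum_{j=1}^{N} w_j(c_i) S_j$, which makes the bound provided by Lemma~\ref{lem:geromel_LMI} tight at $\phi = c_i$ up to an error controlled by $\epsilon_b$, and set $\bar L_i := U_i L_i$. With these choices, the block rows and columns of \eqref{eq:LMI_detectability} associated with $U_i$, $S_i$, and $\bm S$ reproduce \eqref{eq:SD_matrix_inequality_with_phi} at $c_i$. Finally, pick the scalars $\rho_i, \rho_{w,i} > 0$ (with $\lambda_i = 1/\rho_i$) large enough that the slack terms $\rho_i H_{A,i}(I - \Phi\Phi^{\top}) H_{A,i}^{\top}$ and $\rho_{w,i}(I - \Phi_w \Phi_w^{\top})$ used inside that proof are dominated. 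Because $H_{A,i}$ carries the factor $\kappa_{A,i}$ and $H_{w,i}$ the factor $\kappa_{w,i}$, both of which shrink as the grid is refined, these slack contributions vanish in the limit, and the similarity transformation in the proof of Theorem~\ref{thm:design_of_observer_gain} converts $\Omega_i > 0$ into the desired LMI \eqref{eq:LMI_detectability}.

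The main obstacle is the bookkeeping of three small parameters, $\epsilon_a$ (approximation of $S, L$), $\epsilon_b$ (modulus of continuity of $A$, $C$, and the $w_j$), and the original margin $\epsilon$ from \eqref{eq:Lyap_type_SD}. One must pick $\epsilon_a, \epsilon_b$ small enough relative to $\epsilon$ and to the essential bounds of $A, C, S, L$ so that every perturbation collected in the first two steps is strictly dominated by $\epsilon$. This is a quantitative continuity argument that should proceed in the same way as in Proposition~\ref{thm:necessary_stab}; the extra block row for the $w_j$-perturbation introduces no structural difficulty beyond the additional multiplier $\rho_{w,i}$.
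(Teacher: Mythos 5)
Your overall architecture coincides with the paper's: take the certificate $(S,L,\epsilon)$ of \eqref{eq:Lyap_type_SD}, replace $S,L$ by the piecewise-constant $S_a,L_a$ of Assumption~\ref{assump:approximation_detect} at the cost of an $O(\epsilon_a)$ loss of margin, use Assumption~\ref{assump:lebesgue} together with the continuity of $A$, $C$, and the $w_j$ to pin the resulting inequality down at the grid centers $c_i$, choose $U_i=\sum_{j=1}^N w_j(c_i)S_j$ so that the bound of Lemma~\ref{lem:geromel_LMI} is tight, set $\bar L_i=U_iL_i$, and then undo the Schur complements from the proof of Theorem~\ref{thm:design_of_observer_gain}. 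This is exactly what the Appendix does.

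Your last step, however, has a directional error that would make it fail as written. You propose to take both multipliers ``large enough,'' on the grounds that the slack terms vanish under grid refinement because $H_{A,i}$ and $H_{w,i}$ carry the factors $\kappa_{A,i}$ and $\kappa_{w,i}$. That reasoning is sound for the pair $(\rho_i,\lambda_i=1/\rho_i)$, since the corresponding slack $\rho_iH_{A,i}(I-\Phi\Phi^{\top})H_{A,i}^{\top}$ is of size $O(\rho_i\kappa_{A,i}^2)$. But the other slack, $\rho_{w,i}(I-\Phi_w\Phi_w^{\top})$, carries \emph{no} $\kappa$ factor: it is only bounded by $\rho_{w,i}I$ and does not shrink when the grid is refined. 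Concretely, Schur-complementing the last block row and column of \eqref{eq:LMI_detectability} subtracts exactly $\rho_iI$ (the LMI's $\rho_i$, i.e.\ your $\rho_{w,i}$) from the block $U_i+U_i^{\top}$, so for the certificate $(U_i,\bar L_i,S_i,\bm S)$ you have just built from the Lyapunov inequality this multiplier must be \emph{smaller} than the inherited margin $\epsilon_3$; taking it large destroys feasibility no matter how fine the grid is. The correct order of quantifiers, which the paper uses, is to first fix $\lambda_i$ and $\rho_i$ small relative to $\epsilon_3$, and only then refine the grid so that $\epsilon_b$ (hence $\kappa_{A,i},\kappa_{w,i}$) is small compared with $\sqrt{\lambda_i}$ and $\sqrt{\rho_i}$, which controls the remaining terms $\epsilon_b^2\lambda_i^{-1}(U_iU_i^{\top}+\bar L_i\bar L_i^{\top})$ and $\epsilon_b^2\rho_i^{-1}\bm S^2$. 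With that correction the rest of your argument goes through.
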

The proof of Proposition \ref{thm:necessary_detec} can be found in the Appendix.

\section{Numerical Example}
Consider the unstable batch reactor studied 
in \cite{Rosenbrock1972}, where
the system matrices $A_c$ and $B_c$ in the continuous-time plant \eqref{eq:plant_dynamics} 
are given by
\begin{align*}
A_c := 
\begin{bmatrix}
1.38 & -0.2077 & 6.715 & -5.676 \\
-0.5814 & -4.29 & 0 & 0.675 \\
1.067 & 4.273 & -6.654 & 5.893 \\
0.048 & 4.273 & 1.343 & -2.104
\end{bmatrix},\qquad
B_c :=
\begin{bmatrix}
0 & 0 \\
5.679 & 0 \\
1.136 &-3.146 \\
1.136 & 0 
\end{bmatrix}.
\end{align*}
This model is widely used as a benchmark example.
We take the sampling period $h = 0.2$ and the delay interval $[\tau_{\min}, \tau_{\max}] = [0,0.03]$.

We consider that the latest delay $\tau_k$ is
stochastically determined by 
the average of the last two delays $\tau_{k-1}$
and $\tau_{k-2}$. More precisely,
the sequence 
$\{ \phi_k:k \in \mathbb{Z}_+\}$, where
$
\phi_k := 
\begin{bmatrix}
\tau_k \\ \tau_{k-1}
\end{bmatrix}
$,
is a Markov chain, and its
transition probability kernel $\mathcal{G}$ is given in the following way:
For every box $\mathcal{B} = [b_{11}, b_{12}] \times [b_{21}, b_{22}]$,
\begin{equation}
\label{eq:delay_example}
\mathcal{G}\left( \begin{bmatrix}
d_1 \\ d_2
\end{bmatrix}\!, \mathcal{B}  \right) = 
\begin{cases}
\frac{\Phi_{d_{\text{ave}}}(b_{12}) - \Phi_{d_{\text{ave}}}(b_{11})}
{\Phi_{d_{\text{ave}}}(\tau_{\max}) - \Phi_{d_{\text{ave}}}(\tau_{\min})} &
\text{if $d_1 \in [b_{21}, b_{22}]$} \\
0 & \text{otherwise},
\end{cases}
\end{equation}
where $d_{\text{ave}} := (d_1+d_2)/2$ and $\Phi_d(x)$ is the probability distribution function of the normal distribution with
mean $d$ and standard deviation $\sigma$.
Fig.~\ref{fig:sample_path} illustrates a sample path of the delay $\tau(t)$
with the initial data $\tau_0 =\tau_{-1} =  0.02$ and the standard derivation $\sigma = 1/100$,
where $\tau(t)$ is defined by 
\[
\tau(t) := \tau_k\qquad \forall t \in [kh+\tau_k, (k+1)h+\tau_{k+1}).
\] 

\begin{figure}[tb]
	\centering
	\includegraphics[width = 7cm]{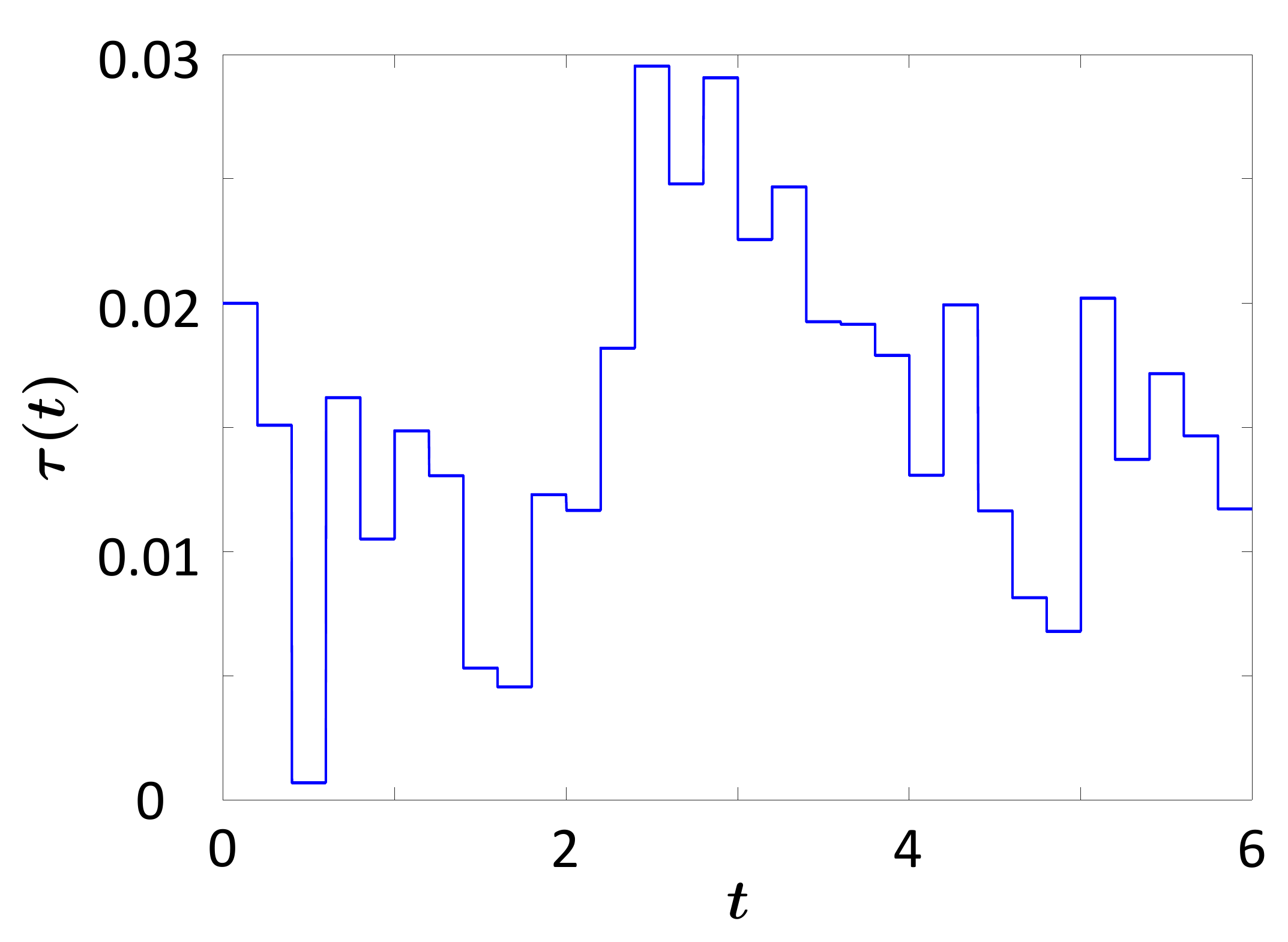}
	\caption{Sample path of $\tau(t)$.}
	\label{fig:sample_path}
\end{figure}

The weighting matrices $Q_c$, $R_c$ for the state and the input in \eqref{eq:Jc_u}
are the identity matrices with compatible dimensions.
Using Theorem \ref{thm:design_of_gain}, we can confirm that
$(A,B)$ in \eqref{eq:A_B_def} 
is stochastically stabilizable. Additionally, 
$(Q-WR^{-1}W^{\top},\bar A)$ in Lemma
\ref{lem:reduction_DT_LQ_cost}
is stochastically detectable by
Theorem \ref{thm:design_of_observer_gain}. Hence, by
Theorems~\ref{thm:reduction} and \ref{thm:LQ_problem_MJS}, we can derive
an optimal controller $u^{\text{opt}}$ 
from the iteration of a Riccati difference equation.

Time responses
are computed for an deterministic initial
state 
\[
x(0) = \begin{bmatrix}1 \\ -1 \\2 \\-2 \end{bmatrix},\quad 
u_{-1} = \begin{bmatrix}0 \\ 0 \end{bmatrix}.
\]
Fig.~\ref{fig:time_response} depicts ten sample paths of the performance function
$\|x(t)\|^2+\|u(t)\|^2$, where
initial delays $\tau_0, \tau_{-1}$ are uniformly distributed in the interval 
$[\tau_{\min},\tau_{\max}]$ and
the standard deviation $\sigma$ of the probability distribution function $\Phi_d$ in \eqref{eq:delay_example}
is given by $\sigma = 1/100$.

We observe that the time responses
with small initial delays are similar to
the response with no delays
by the conventional discrete-time LQ regulator with the same weighting matrices. 
Although larger delays degrade the control performance,
the optimal controller achieves 
almost the same decrease rate for every initial delay.

\begin{figure}[tb]
	\centering
	\includegraphics[width = 7cm]{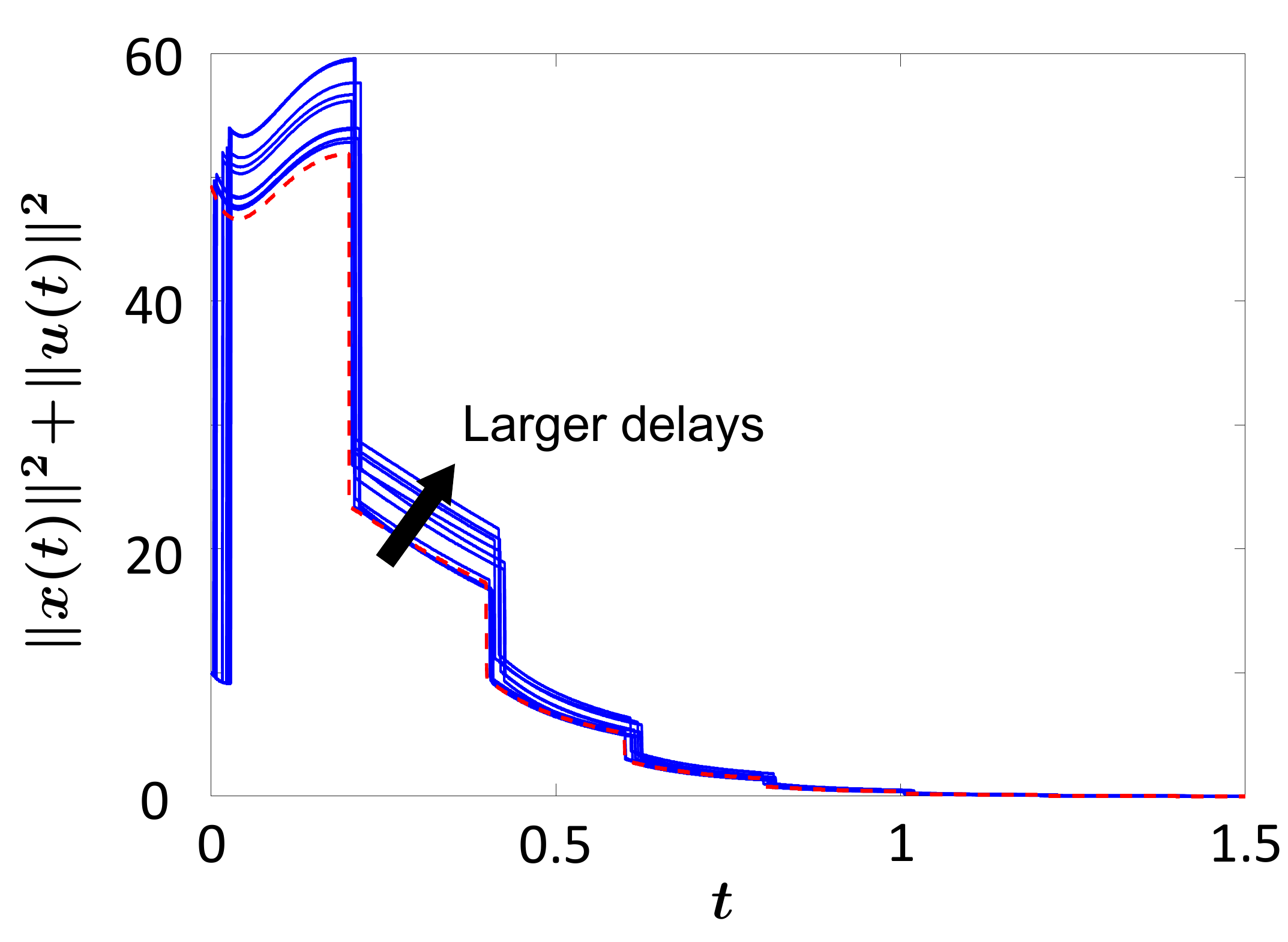}
	\caption{Ten sample paths of $\|x(t)\|^2 + \|u(t)\|^2$:
		The solid lines are the time responses with stochastic delays.
		The dotted line is the time response with no delays, for which we used
		the (conventional) discrete-time LQ regulator computed
		with the same weighting matrices.}
	\label{fig:time_response}
\end{figure}

\section{Concluding Remarks}
We provided the design of delay-dependent optimal controllers
for sampled-data systems whose
sensor-to-controller delays are stochastically determined by the last few delays.
Our optimal control problem was reduced to
the LQ problem of discrete-time Markov jump systems.
We can efficiently compute an optimal controller by
iteratively solving a Riccati difference equation.
Moreover, we derived the sufficient conditions for stochastic stabilizability
and detectability in terms of LMIs via the gridding approach.
From these conditions, we can also construct stabilizing controllers
and state observers.
Future work will focus on addressing more general systems
by incorporating packet losses and
output feedback.

To solve the stabilization problem of discrete-time systems
with continuous-valued Markovian jumping parameters,
we approximated the function $S$ in the
Lyapunov inequality \eqref{eq:Lyap_type_iff} by
a piecewise constant function.
Another interesting study is to approximate the system
\[\{(A(\phi),B(\phi)): \phi \in \mathcal{M}\}\]
by 
\[\{(A(c_i),B(c_i)): \text{$c_i$ is the center of $\mathcal{B}_i$},~~i=1,\dots,N\}\]
and then to consider
the stabilization problem of systems having
discrete-valued Markovian jumping parameters but time-varying uncertainty in the coefficient matrices.

\appendix
\section{Proof of Proposition \ref{thm:necessary_stab}}
First, we prove that 
if $\epsilon_a >0$ in Assumption \ref{assump:approximation} is 
sufficiently small, then
there exists $\epsilon_1 > 0$ such that, for $\mu$-almost every $\phi \in \mathcal{M}$, 
\begin{equation}
\label{eq:Sa_Fa_inequality}
\begin{bmatrix}
S_a(\phi) & (A(\phi) + B(\phi)F_a(\phi))^{\top}
\begin{bmatrix}
\sqrt{w_1(\phi)} I \\ \vdots \\ \sqrt{w_N(\phi)} I
\end{bmatrix}^{\top} \\
\star & \bm{S}^{-1}
\end{bmatrix} > \epsilon_1 I,
\end{equation}
where $\bm{S} := \diag(S_1,\dots,S_N)$.

Since 
\begin{equation}
\label{eq:g_integral}
\int_{\mathcal{M}} g(\phi,\ell) \mu(d\ell) = 1
\end{equation}
for all $\phi \in \mathcal{M}$, it follows from 1 of Assumption \ref{assump:approximation}
that 
\[
\left\|
\int_{\mathcal{M}} g(\phi,\ell) (S(\ell) - S_a(\ell) )\mu(d\ell)
\right\|
\leq 
\int_{\mathcal{M}} g(\phi,\ell) \|S(\ell) - S_a(\ell)\| \mu(d\ell)
\leq \epsilon_a
\]
for $\mu$-almost every $\phi \in \mathcal{M}$.
We obtain
\begin{align*} 
\Bigg\|
&\left(S(\phi) - 
(A(\phi)+B(\phi)F(\phi))^{\top} 
\left(
\int_{\mathcal{M}} g(\phi,\ell) S(\ell)\mu(d\ell)
\right)
(A(\phi)+B(\phi)F(\phi)) \right) \\
&-
\left(S_a(\phi) - 
(A(\phi)+B(\phi)F(\phi))^{\top} 
\left(
\int_{\mathcal{M}} g(\phi,\ell) S_a(\ell)\mu(d\ell)
\right)
(A(\phi)+B(\phi)F(\phi)) \right) 
\Bigg\|\\
&\leq 
\left(
1 + \|A+BF\|_{\infty}^2
\right)  \epsilon_a
\end{align*}
for $\mu$-almost every $\phi \in \mathcal{M}$.
Therefore, if we set $\overline{\epsilon}_a > 0$ to be a value with
\[
\overline{\epsilon}_a< \frac{\epsilon}{1 + \|A+BF\|_{\infty}^2},
\]
then $\epsilon_2 := \epsilon - \big(1 + \|A+BF\|_{\infty}^2 \big) \overline{\epsilon}_a >0$
satisfies
\begin{equation}
\label{eq:Sa_Lyap}
S_a(\phi) - 
(A(\phi)+B(\phi)F(\phi))^{\top} 
\left(
\int_{\mathcal{M}} g(\phi,\ell) S_a(\ell)\mu(d\ell)
\right)
(A(\phi)+B(\phi)F(\phi)) > \epsilon_2 I
\end{equation}
for $\mu$-almost every $\phi \in \mathcal{M}$ and
for every $\epsilon_a \in (0,\overline{\epsilon}_a )$.
Substituting \eqref{eq:integral_to_summation} and \eqref{eq:summation_to_quadratic} 
into \eqref{eq:Sa_Lyap} and applying the Schur complement formula,
we have that
there exists $\epsilon_3 >0$ such that 
\[
\begin{bmatrix}
S_a(\phi) & (A(\phi) + B(\phi)F(\phi))^{\top}
\bm{w}(\phi)\\
\star & \bm{S}^{-1}
\end{bmatrix} > \epsilon_3 I
\]
for $\mu$-almost every $\phi \in \mathcal{M}$,
where 
\[
\bm{w}(\phi) := 
\begin{bmatrix}
\sqrt{w_1(\phi)} I & \cdots & \sqrt{w_N(\phi)} I
\end{bmatrix}.
\]

We obtain
\begin{align*}
&	\begin{bmatrix}
S_a(\phi) & (A(\phi) + B(\phi)F_a(\phi))^{\top}
\bm{w}(\phi) \\
\star & \bm{S}^{-1}
\end{bmatrix} \\
&\quad =
\begin{bmatrix}
S_a(\phi)  & (A(\phi) + B(\phi)F(\phi))^{\top}
\bm{w}(\phi) \\
\star & \bm{S}^{-1}
\end{bmatrix} 
-
\begin{bmatrix}
0& (F(\phi) - F_a(\phi))^{\top}	B(\phi)^{\top}
\bm{w}(\phi) \\
\star & 0
\end{bmatrix}.
\end{align*}
By \eqref{eq:g_integral},
$\{w_i\}_{i=1}^{N}$ satisfies
\begin{equation}
\label{eq:beta_property}
\sum_{i=1}^{N} w_i(\phi) = 
\sum_{i=1}^{N} \int_{\mathcal{B}_i} g(\phi,\ell) \mu(d\ell) = 
\int_{\mathcal{M}} g(\phi,\ell) \mu(d\ell)=
1\qquad \forall \phi \in \mathcal{M}.
\end{equation}
It follows that 
$
\left\|
\bm{w}(\phi)
\right\| = 1
$ for every choice of disjoint boxes $\{\mathcal{B}_i\}_{i=1}^N$ and 
for every $\phi \in \mathcal{M}$. Hence,
if $\|F - F_a\|_{\infty} < \epsilon_a$, then
\[
\left\|
\begin{bmatrix}
0& (F(\phi) - F_a(\phi))^{\top}	B(\phi)^{\top}
\bm{w}(\phi) \\
\star & 0
\end{bmatrix} 
\right\| \leq 
\|B\|_{\infty}\epsilon_a
\]
for $\mu$-almost every $\phi \in \mathcal{M}$.
If $\epsilon_a \in(0,\overline{\epsilon}_a )$ satisfies $\epsilon_a < \epsilon_3/\|B\|_{\infty}$, then
the desired inequality \eqref{eq:Sa_Fa_inequality} holds 
with $\epsilon_1 := \epsilon_3 - \|B\|_{\infty}\epsilon_a$.

Let us next derive the feasibility of the LMIs  in \eqref{eq:LMI_feedback_design} 
from the inequality
\eqref{eq:Sa_Fa_inequality}.
Using the similarity transformation $\diag(S_a^{-1}, I)$,
we find that there exists $\epsilon_4 > 0$ such that 
\begin{equation*}
%\label{eq:Ra_F_a}
\begin{bmatrix}
R_i & 
\begin{bmatrix}
R_i & \bar F_i^{\top}
\end{bmatrix}
\begin{bmatrix}
A(\phi)^{\top} \\
B(\phi)^{\top} 
\end{bmatrix}
\bm{w}(\phi) \\
\star & \bm{R}
\end{bmatrix} > \epsilon_4 I
\end{equation*}
for $\mu$-almost every $\phi \in \mathcal{B}_i$ and for every $i=1,\dots,N$, 
where $R_i := S_i^{-1}$, $\bm{R} := \bm{S}^{-1}$ and $\bar F_i := F_iR_i$.
Assumption \ref{assump:lebesgue} on the 
non-zero property of $\mu$ and 
Assumptions \ref{assump:continuity}  and \ref{assump:approximation} on 
the continuity of $A,B,w$ at $\phi=c_i$
imply
that for every $\epsilon_5 \in (0, \epsilon_4)$,
\begin{equation}
\label{eq:Ri_ineq}
\begin{bmatrix}
R_i     
& R_i^{\top}\Gamma_{A,i}^{\top}   +   \bar F_i^{\top} \Gamma_{B,i}^{\top} \\
\star &  \bm{R} 
\end{bmatrix} > \epsilon_5I\qquad \forall i=1,\dots,N,
\end{equation}
where
$\Gamma_{A,i}$ and $\Gamma_{B,i}$ are defined as in Theorem \ref{thm:design_of_gain}.

By 3 of Assumption \ref{assump:approximation}, we see that 
for $\mu$-almost every $\phi \in \mathcal{B}_i$ and
for every $i=1,\dots,N$,
\begin{align}
\label{eq:kappa_cond_nec}
\left\|
\bm{w}(\phi)^{\top}
\begin{bmatrix}
A(\phi) &
B(\phi)
\end{bmatrix}
%	\begin{bmatrix}
%		\sqrt{w_1(\phi)} A(\phi) &\sqrt{w_1(\phi)} B(\phi) \\
%		\vdots & \vdots \\
%		\sqrt{w_N(\phi)} A(\phi) &
%		\sqrt{w_N(\phi)} B(\phi)
%	\end{bmatrix}
-
\begin{bmatrix}
\Gamma_{A,i} &  \Gamma_{B,i}
\end{bmatrix}
%  \begin{bmatrix}
%  \sqrt{w_1(\bar t_i)} A(\bar t_i)^{\top} & \cdots & \sqrt{w_N(\tau)} A(\bar t_i)^{\top} \\ 
%  \sqrt{w_1(\bar t_i)} B(\bar t_i)^{\top} & \cdots & \sqrt{w_N(\bar t_i)} B(\bar t_i)^{\top}
%  \end{bmatrix}
\right\| 
\leq \big(1 + \left\| \begin{bmatrix} A & B \end{bmatrix} \right\|_{\infty}
\big)\epsilon_b =: \epsilon_{\kappa}.
\end{align}
On the other hand, the the LMI in \eqref{eq:LMI_feedback_design} with $U_i = R_i$ and 
$\kappa_i = \epsilon_{\kappa}$
is equivalent to
\[
\begin{bmatrix}
R_i     
& R_i^{\top}\Gamma_{A,i}^{\top}   +   \bar F_i^{\top} \Gamma_{B,i}^{\top} & 
0 &
\epsilon_{\kappa}
\begin{bmatrix}
R_i^{\top}      & \bar F_i^{\top}
\end{bmatrix}\\
\star   & \bm{R}  & \lambda_i I   & 0 \\
\star   & \star   & \lambda_i I    & 0 \\
\star   & \star   & \star   & \lambda_i I
\end{bmatrix}   >   0.
\]
By the Schur complement formula,
if $\bar F_i = F_iR_i$, then
the above inequality is equivalent to
\begin{equation}
\label{eq:Ri_schur_complement}
\begin{bmatrix}
R_i     
& R_i^{\top}\Gamma_{A,i}^{\top}   +   \bar F_i^{\top} \Gamma_{B,i}^{\top}  \\
\star &  \bm{R} 
\end{bmatrix}
-
%\begin{bmatrix}
%\epsilon_{\kappa}^2/\lambda_i \cdot
%R_i^{\top}
%\big(
%I + F_i^{\top} F_i
%\big) R_i
% & 0 \\
%\star &
%\lambda_i I
%\end{bmatrix}
\begin{bmatrix}
(\epsilon_{\kappa}^2/\lambda_i) \cdot
R_i^{\top}\big(I + F_i^{\top} F_i\big) R_i
&0 \\
\star &
\lambda_i I
\end{bmatrix}
>0.
%\begin{bmatrix}
%0   
%& \kappa_i 
%\begin{bmatrix}
%R_i^{\top}      & \bar F_i^{\top}
%\end{bmatrix} \\
%\lambda_i I &  0
%\end{bmatrix}
%\begin{bmatrix}
%1/\lambda_i I    
%& 0\\
%0 & 1/\lambda_i I   
%\end{bmatrix}
%\begin{bmatrix}
%0   
%& \kappa_i 
%\begin{bmatrix}
%R_i^{\top}      & \bar F_i^{\top}
%\end{bmatrix} \\
%\lambda_i I &  0
%\end{bmatrix}^{\top} > 0
\end{equation}
From \eqref{eq:Sa_Lyap}, we know that 
\[
\|R_i\| = \|S_i^{-1}\|  < \frac{1}{\epsilon_2}\qquad \forall i=1,\dots,N.
\]
Therefore, if $\lambda_i > 0$ and $\epsilon_{\kappa} > 0$ satisfy
\[
\lambda_i < \epsilon_5,\qquad 
\epsilon_{\kappa} < \epsilon_2
\sqrt{
	\frac{\lambda_i \epsilon_5}{1+ (\|F\|_{\infty} + \epsilon_a)^2} }
\qquad \forall i = 1,\dots,N,
\]
then the inequality \eqref{eq:Ri_ineq} leads to
the desired conclusion \eqref{eq:Ri_schur_complement}.
If we choose sufficiently small $\epsilon_b >0$, then  $\epsilon_{\kappa} >0$ satisfies the above inequality.
This completes the proof.

\section{Proof of Proposition \ref{thm:necessary_detec}}
By the same discussion as in the proof of Proposition \ref{thm:necessary_stab},
there exists $\epsilon_1 >0$ such that 
\[
\begin{bmatrix}
\big(\sum_{j=1}^{N} w_j(\phi)S_j\big)^{-1} & A(\phi) +L_iC(\phi) \\
\star & S_i 
\end{bmatrix}  > \epsilon_1 I
\]	
for $\mu$-almost every $\phi \in \mathcal{B}_i$ and for every $i=1,\dots,N$.
Using the similarity transformation $\diag\big(\sum_{j=1}^{N} w_j(\phi)S_j, I\big)$ and
applying the Schur complement formula,
we find that there exists $\epsilon_2 > 0$ such that 
\begin{equation*}
%\label{eq:U_L}
\begin{bmatrix}
2U(\phi)
& U(\phi)(A(\phi) + L_iC(\phi))  &
{\bm S}_{w}(\phi)\\
\star & S_i & 0 \\
\star & \star & \bm S
\end{bmatrix}> \epsilon_2 I
\end{equation*}
for $\mu$-almost every $\phi \in \mathcal{B}_i$ and for every $i=1,\dots,N$, 
where $U(\phi) := \sum_{j=1}^{N} w_j(\phi)S_j$, $\bm{S} := \diag(S_1,\dots,S_N)$, and
$\bm{S}_{w}$ is defined as in \eqref{eq:S_beta_def}.
Assumption \ref{assump:lebesgue} 
on the non-zero property of $\mu$ and Assumptions 
\ref{assump:continuity_detect} and \ref{assump:approximation_detect}
on
the continuity of $A,C,w$ at $\phi=c_i$ lead to 
\begin{equation}
\label{eq:UL_ineq}
\begin{bmatrix}
2U_i  &  U_i \Upsilon_{A,i} + \bar L_i \Upsilon_{C,i}  & \Upsilon_{w,i} 
{\bm S}  \\
\star  & S_i &  0 \\
\star & \star &   {\bm S} 
\end{bmatrix} > \epsilon_3 I\qquad 
\forall \epsilon_3 \in (0, \epsilon_2),~\forall i=1,\dots,N,
\end{equation}
where $\Upsilon_{A,i}$, $\Upsilon_{C,i}$, $\Upsilon_{w_i}$ are defined as in 
Theorem \ref{thm:design_of_observer_gain} and
\begin{equation}
\label{eq:U_i_def}
U_i := U(c_i) = \sum_{j=1}^{N} w_j(c_i)S_j,\quad \bar L_i :=U_iL_i.
\end{equation}

The LMIs \eqref{eq:LMI_detectability} with 
$\kappa_{A,i} = \epsilon_b = \kappa_{w,i}$ are equivalent to
\begin{equation*}
\begin{bmatrix}
U_i + U_i^{\top} &  U_i \Upsilon_{A,i} + \bar L_i \Upsilon_{C,i}  & \Upsilon_{w,i} 
{\bm S}  & \epsilon_b
\begin{bmatrix}
U_i  &  \bar L_i
\end{bmatrix} & 
0 & 
0 &  \rho_{i} I \\
\star  & S_i &  0 & 0 & 0 & \lambda_i I & 0 \\
\star & \star &   {\bm S} & 0 & \epsilon_b{\bm S} & 0 & 0 \\
\star & \star & \star & \lambda_{i} I & 0 & 0 & 0 \\
\star & \star & \star & \star & \rho_{i}I & 0  & 0 \\
\star   & \star & \star & \star & \star & \lambda_i I  & 0 \\
\star   & \star & \star & \star & \star & \star & \rho_{i} I
\end{bmatrix}
> 0
\end{equation*}
and hence to
\begin{equation*}
\begin{bmatrix}
U_i \!+\! U_i^{\top}  &  U_i \Upsilon_{A,i} \!+\! \bar L_i \Upsilon_{C,i}  & \Upsilon_{w,i} 
{\bm S}  \\
\star  & S_i &  0 \\
\star & \star &   {\bm S} 
\end{bmatrix}
-
\begin{bmatrix}
\frac{\epsilon_b^2}{\lambda_i} (U_i U_i^{\top} \!+\! \bar L_i \bar L_i^{\top})  \!+\! \rho_i I& 0 & 0 \\
0 & \lambda_i I & 0 \\
0 & 0 & \frac{\epsilon_b^2}{\rho_i}  {\bm S}^2
\end{bmatrix}
> 0
\end{equation*}
by the Schur complement formula.
From \eqref{eq:beta_property}, we know that 
$U_i$ and $\bar L_i$ defined by \eqref{eq:U_i_def} satisfy
\begin{align*}
\|U_i\| &\leq \|U\|_{\infty} \leq \sum_{j=1}^N w_j(\phi) \|S_j\| \leq \|S_a\|_{\infty} \leq \|S\|_{\infty} + \epsilon_a \\
\|\bar L_i\| &\leq 
\|U_i\| \cdot \|L_i\| \leq 
(\|S\|_{\infty} + \epsilon_a) \cdot (\|L\|_{\infty} + \epsilon_a)
\end{align*}
for every $i=1,\dots,N$. Moreover, $\bm{S} = \diag(S_1,\dots,S_N)$ satisfies 
$\|\bm S\|\leq \|S\|_{\infty} + \epsilon_a$.
Hence, 
for the matrices $U_i$ and $\bar L_i$ in \eqref{eq:U_i_def} and $\bm{S} = \diag(S_1,\dots,S_N)$,
there exist $\lambda_i >0$, $\rho_i >0$, $\epsilon_b > 0$ such that 
\[
\begin{bmatrix}
\frac{\epsilon_b^2}{\lambda_i} (U_i U_i^{\top} + \bar L_i \bar L_i^{\top})  + \rho_i I& 0 & 0 \\
0 & \lambda_i I & 0 \\
0 & 0 & \frac{\epsilon_b^2}{\rho_i}  {\bm S}^2
\end{bmatrix} < \frac{\epsilon_3}{2} I.
\]
By \eqref{eq:UL_ineq}, 
the desired LMIs in \eqref{eq:LMI_detectability} are satisfied.

%\section*{Acknowledgments}
%We would like to acknowledge the assistance of volunteers in putting
%together this example manuscript and supplement.

\bibliographystyle{siamplain}

\end{document}